\numberwithin{equation}{section}
\newtheorem{thm}{Theorem}[section]
\newtheorem{lem}{Lemma}[section]
\newtheorem{prop}{Proposition}[section]
\theoremstyle{definition}
\newtheorem{defn}{Definition}[section]
\theoremstyle{remark}
\newtheorem{rem}{Remark}[section]
\def\supetage#1#2{
\sup_{\scriptstyle {#1}\atop\scriptstyle {#2}} }
\begin{document}
\title{Global solutions of compressible Navier--Stokes equations with a density--dependent viscosity coefficient}
\author{Ting Zhang\thanks{E-mail: zhangting79@hotmail.com}\\
\textit{\small Department of Mathematics, Zhejiang University,
Hangzhou 310027, China} }
\date{}
\maketitle
\begin{abstract}
  We prove the global existence and uniqueness of the classical (weak) solution for the 2D or 3D compressible
  Navier--Stokes equations with a density--dependent viscosity
  coefficient ($\lambda=\lambda(\rho)$).
  Initial data and solutions are only small in the energy-norm.
  We  also give a description of the large time
  behavior of the solution. Then, we study the propagation of singularities in solutions. We obtain that
  if there is a vacuum domain at
  initially, then the vacuum domain will exists for all time, and
  vanishes as time goes to infinity.
\end{abstract}

\section{Introduction}
In this paper, we consider the following compressible Navier--Stokes
equations
    \begin{equation}
    \left\{
    \begin{array}{l}
        \rho_t+\mathrm{div}(\rho u)=0,\\
            (\rho u)_t+\mathrm{div}(\rho u\otimes u)+\nabla P=
            \mu\Delta
            u+\nabla((\mu+\lambda(\rho))\mathrm{div}u)+\rho f,
    \end{array}
    \right.\label{3VDD-full-E1.1}
    \end{equation}
for $x\in\mathbb{R}^N$ and $t>0$, $N=2$ or $3$, with the boundary
and initial conditions
    \begin{equation}
      u(x,t)\rightarrow0,
      \ \rho(x,t)\rightarrow \tilde{\rho}>0, \ \textrm{ as
      }|x|\rightarrow\infty, \ t>0,
    \end{equation}
    \begin{equation}
      (\rho,u)|_{t=0}=(\rho_0,u_0).\label{3VDD-full-E1.2}
    \end{equation}
Here $\rho({x},t)$, $u({x},t)$ and $P=P(\rho)$ stand for the fluid
density, velocity and pressure respectively, $f$ is a given external
force, the dynamic viscosity coefficient $\mu$ is a positive
constant, the second viscosity coefficient $\lambda=\lambda(\rho)$
is a function of $\rho$.

In \cite{Zhang2008}, we proved the global existence of weak
solutions for the two-dimensional system, and study the propagation
of singularities in solutions. In this paper, we want to obtain the
global existence, uniqueness and the large time behavior of the
classical solution to the system
(\ref{3VDD-full-E1.1})--(\ref{3VDD-full-E1.2}) in $\mathbb{R}^2$ or
$\mathbb{R}^3$,  also obtain the global existence of weak solutions
and study the propagation of singularities in solutions in
$\mathbb{R}^3$.

At first, we obtain the global existence,  uniqueness and the large
time behavior of the classical solution, when the energy of initial
data is small, but the oscillation is arbitrarily large.
Specifically, we fix a positive constant $\tilde{\rho}$, assume that
$(\rho_0-\tilde{\rho},u_0)$ are small in $L^2$,  and
$\rho_0-\tilde{\rho},u_0\in
 H^3$ with no restrictions on their norms, (since we use the classical analysis methods in this paper, we
restrict the result of the existence of the classical solutions on
the framework of Hilbert space $H^{3}(\mathbb{R}^{N})\hookrightarrow
C^1(\mathbb{R}^{N})$). Our existence result accommodates a wide
class of pressures $P$, including pressures that are not monotone in
$\rho$.
  It also generalizes and improves upon earlier results of Danchin
 \cite{Danchin2004} and
Matsumura-Nishida \cite{Marsumura83}
 in a significant way:
$(\rho_0-\tilde{\rho},u_0)$ are only small  in $L^2$.

Now, we give a precise formulation of our result. Concerning the
pressure $P$, viscosity coefficients $\mu$ and $\lambda$, we fix
$0<\tilde{\rho}<\bar{\rho}$ and assume that
    \begin{equation}
      \left\{
      \begin{array}{l}
       P\in C^1([0,\bar{\rho}])\cap C^3((0,\bar{\rho}]),\ \lambda\in C^1([0,\bar{\rho}])
      \cap C^3((0,\bar{\rho}]),\\
       \mu>0,
      \ \lambda\in\left\{\begin{array}{ll}
        [0,\infty),& N=2,\cr
        [0,3\mu), & N=3,
      \end{array}
      \right. \textrm{for all }\rho\in[0,\bar{\rho}],\\
           P(0)=0,
              \ P'(\tilde{\rho})>0, \\
              (\rho-\tilde{\rho})[P(\rho)-P(\tilde{\rho})]>0,
                \  \rho\in[0,\tilde{\rho})\cup(\tilde{\rho},\bar{\rho}].
      \end{array}
      \right.\label{3VDD-full-E1.5}
    \end{equation}
Let $G$ be the potential energy density, defined by
    \begin{equation}
      G(\rho)=\rho\int^{\rho}_{\tilde{\rho}}\frac{P(s)-P(\tilde{\rho})}{s^2}ds.
    \end{equation}
Then, for any $g\in C^2([0,\bar{\rho}])$ with
$g(\tilde{\rho})=g'(\tilde{\rho})=0$, there is a constant $C$ such
that $|g(\rho)|\leq CG(\rho)$, $\rho\in[0,\bar{\rho}]$.

Define
        \begin{equation}
          C_0=\int\left(\frac{1}{2}\rho_0|u_0|^2+G(\rho_0)\right)dx,\label{3VDD-full-E1.6}
        \end{equation}
            \begin{equation}
              C_f=\sup_{t\geq0}\|f(\cdot,t)\|_{L^2}^2+
              \int^\infty_0\left(
                \|f(\cdot,t)\|_{L^2}+\|f(\cdot,t)\|_{L^2}^2
                +\sigma^{3N-2}\|\nabla f\|_{L^4}^4+\sigma^{2N-1}\|
                f_t(\cdot,t)\|_{L^2}^2
              \right)dt
            \end{equation}
 and
        \begin{equation}
          M_q=\int^\infty_0\left(
      \sigma^2\| f_t\|_{L^2}^2+\sigma^3\|\nabla
      f\|_{L^4}^4+\sigma^{p_1}\|f_t\|_{L^{2+q}}^{2+q}
      +\sigma^{p_1}\|\nabla
      f\|_{L^{p_2}}^{p_2}
      \right)dt+\|\nabla u_0\|_{L^2}^2+\sup_{t\geq0}\|f\|_{L^{2+q}},\label{3VDD-full-E1.10}
        \end{equation}
     where  $\sigma(t)=\min\{1,t\}$,
        \begin{equation}
          p_1=\left\{\begin{array}{ll}
                2+\frac{q}{2}, &N=2,\\
                    1+\frac{5q}{4}, &N=3,
          \end{array}
          \right.\ p_2=\left\{\begin{array}{ll}
                4+2q, &N=2,\\
                   \frac{ 6(2+q)}{4-q}, &N=3,
          \end{array}
          \right.
        \end{equation}
      and $q$ is
     a constant satisfying
        \begin{equation}
          q\in\left\{
          \begin{array}{ll}
            (0,2), &N=2,\\
          (1,\frac{4}{3}), & N=3,
          \end{array}
          \right.\ \textrm{ and }
          q^2<\frac{4\mu}{\mu+\lambda(\rho)},
          \ \forall\ \rho\in[0,\bar{\rho}].
        \end{equation}
As in \cite{Hoff2008,Hoff2005}, we recall the definition of the
vorticity matrix $w^{j,k}=\partial_k u^j-\partial_j u^k$, and
definition of the function
    \begin{equation}
      F=(\lambda+2\mu)\mathrm{div}u-P(\rho)+P(\tilde{\rho}).
    \end{equation}
Thus, we have
\begin{equation}
    \Delta
    u^j=\partial_j(\frac{F+P-P(\tilde{\rho})}{\lambda+2\mu})+\partial_i(w^{j,i}).
    \label{3VDD-full-E2.24}
    \end{equation}
 We also define the convective derivative
$\frac{D}{Dt}$ by $\frac{Dw}{Dt}=\dot{w}=w_t+u\cdot\nabla w$,  the
H\"{o}lder norm
        $$
        <v>^\alpha_A=\supetage{x,y\in A}{x\neq
        y}\frac{|v(x)-v(y)|}{|x-y|^\alpha},
        $$
and
        $$
        <g>^{\alpha,\beta}_{A\times[t_1,t_2]}=\supetage{(x,t),(y,s)\in A\times[t_1,t_2]}{(x,t)\neq
        (y,s)}\frac{|g(x,t)-g(y,s)|}{|x-y|^\alpha+|t-s|^\beta},
        $$
where  $v:A\subseteq \mathbb{R}^N\rightarrow \mathbb{R}^N$,
 $g:A\times[t_1,t_2]\rightarrow \mathbb{R}^N$ and
 $\alpha,\beta\in(0,1]$.

The following is the main result of this paper.

\begin{thm}\label{3VDD-full-T1.1}
  Assume that conditions (\ref{3VDD-full-E1.5})--(\ref{3VDD-full-E1.10}) hold. Then,
  for a
  given  positive number $M$ (not necessarily small) and
  $\bar{\rho}_1\in(\tilde{\rho},\bar{\rho})$, there are positive
  number $\varepsilon$,
   such that,  the Cauchy problem
  (\ref{3VDD-full-E1.1})--(\ref{3VDD-full-E1.2}) with
  the  initial data $(\rho_0,u_0)$ and external force $f$
  satisfying
    \begin{equation}
      \left\{
      \begin{array}{l}
       0<\underline{\rho}_1\leq \inf \rho_0\leq\sup\rho_0\leq\bar{\rho}_1,\\
            C_0+C_f\leq\varepsilon,\\
                M_q\leq M,\\
                \rho_0-\tilde{\rho},\ u_0\in H^3,\ f_t\in
                C([0,\infty);L^2),\ f\in
                C([0,\infty);H^2),
      \end{array}
      \right.
    \end{equation}
 has a unique
global classical solution $(\rho,u)$  satisfying
    \begin{equation}
      \frac{1}{2}\underline{\rho}_1\leq\rho(x,t)\leq\bar{\rho},\
      \ x\in\mathbb{R}^N,\ t\geq0,\label{3VDD-full-E1.15}
    \end{equation}
     \begin{equation}
      (\rho-\tilde{\rho},u)\ \in C^1(\mathbb{R}^N\times[0,T])\cap C([0,T],H^{3})\cap
C^{1}([0,T],H^{2}),
    \end{equation}
        \begin{equation}
    \sup_{t\in[0,T]}\left(\|(\rho-\tilde{\rho},u)\|_{H^3}+\|(\rho_t,u_t)\|_{H^2}
    \right)+\int^{T}_0\|u\|_{H^4}^2dt\leq K(T),\ \forall\ T>0,
        \end{equation}
                   \begin{equation}
     \lim_{t\rightarrow+\infty}\int\left(|\rho-\tilde{\rho}|^4+\rho|u|^4\right)(x,t)dx=0,
     \label{3VDD-full-E1.15-10}
    \end{equation}
     \begin{equation}
    \lim_{t\rightarrow+\infty}\int
    |\nabla u|^2(x,t)dx=0,
     \label{3VDD-full-E1.15-20}
    \end{equation}
where $K(T)$ is a positive constant dependent on
$\underline{\rho}_1$, $\bar{\rho}_1$,
$\|(\rho_0-\tilde{\rho},u_0)\|_{H^3}$ and $T$.
\end{thm}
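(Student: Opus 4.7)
The plan is to follow the Hoff--type a priori estimate plus continuation strategy, adapted to density--dependent viscosity and to the $H^3$ classical framework, and combined with a Matsumura--Nishida--style large--time analysis. Step one is local existence of an $H^3$ classical solution on some $[0,T_*)$, with $\rho$ staying positive, via the standard linearisation and fixed--point argument; this uses only $\inf\rho_0>0$ and $(\rho_0-\tilde\rho,u_0)\in H^3$. Step two is the central a priori bound. Define
\[
  A(T)=\sup_{0\leq t\leq T}\bigl(C_0(t)+\sigma\|\nabla u\|_{L^2}^2\bigr)
  +\int_0^T\bigl(\|\nabla u\|_{L^2}^2+\sigma\|\dot u\|_{L^2}^2+\sigma^3\|\nabla\dot u\|_{L^2}^2\bigr)dt,
\]
with $C_0(t)$ the natural energy built from $G(\rho)$, and prove a self--improving inequality $A(T)\leq C(C_0+C_f)+C\Phi(A(T))$ with $\Phi$ superlinear. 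The key manipulations test the momentum equation against $u$ and against $\dot u$; use the elliptic representation (\ref{3VDD-full-E2.24}) to recast $\Delta u$ through $F$ and $w^{j,k}$; exploit the elliptic equation $\Delta F=\mathrm{div}(\rho\dot u-\rho f)$ to control $\|F\|_{L^p}$ and $\|\nabla F\|_{L^p}$ by $\|\dot u\|_{L^2}$ and $f$; and use the weights $\sigma^k$ to compensate the possibly large $\|\nabla u_0\|_{L^2}$ compatible with small $C_0$.

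The main obstacle is closing the pointwise density bound $\tfrac12\underline\rho_1\leq\rho\leq\bar\rho$ globally in time. Along Lagrangian trajectories the continuity equation gives $D_t(\log\rho)=-\mathrm{div}\,u=-(F+P(\rho)-P(\tilde\rho))/(\lambda(\rho)+2\mu)$. Integrating in $t$ and using the sign condition $(\rho-\tilde\rho)(P(\rho)-P(\tilde\rho))>0$ from (\ref{3VDD-full-E1.5}) as a one--sided dissipation against over/undershoot of $\tilde\rho$, the only remaining term is the $F$--contribution, bounded in essence by $\|F\|_{L^1_tL^\infty_x}$. This is controlled through the elliptic estimate for $F$, Step two, and the extra integrability of $f$ encoded in $M_q$ and $C_f$. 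The density--dependence of $\lambda$ is accommodated by restructuring $F$ as in \cite{Zhang2008}; the non--monotonicity of $P$ is handled by working with $G(\rho)$ in place of the usual quadratic potential. Choosing $\varepsilon$ small, depending on $M$, $\underline\rho_1$, $\bar\rho_1$, yields the pointwise bound.

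With $\rho$ bounded strictly between $\underline\rho_1/2$ and $\bar\rho$, the system is uniformly parabolic--hyperbolic and I would bootstrap to $H^3$ by differentiating the equations up to third order and running standard energy estimates, using Steps two--three as absorption data, to obtain the $K(T)$ bound. Continuation to $[0,\infty)$ follows by the usual contradiction argument, and uniqueness follows from an $L^2$ Gronwall estimate on the difference of two solutions, whose coefficients are Lipschitz thanks to (\ref{3VDD-full-E1.15}). For the large--time statements, $\int_0^\infty\|\nabla u\|_{L^2}^2\,dt<\infty$ together with an $L^1$ control on $\frac{d}{dt}\|\nabla u\|_{L^2}^2$ obtained from Step two gives (\ref{3VDD-full-E1.15-20}); for (\ref{3VDD-full-E1.15-10}) I would combine the decay of $\|\mathrm{div}\,u\|_{L^2}$ via $F$ with $L^p$ interpolation against the uniform bound on $G(\rho)$ and the quadratic behaviour of $G$ near $\tilde\rho$ on $[0,\bar\rho]$, which translates $L^2$--decay of $G(\rho)$ into $L^4$--decay of $\rho-\tilde\rho$.
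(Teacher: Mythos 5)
Your overall strategy coincides with the paper's: local $H^3$ existence, Hoff--type weighted functionals $A_1,A_2$ closed by a bootstrap on $[0,T]$, the effective viscous flux $F=(\lambda+2\mu)\mathrm{div}u-P+P(\tilde\rho)$ and the vorticity $w$ controlled through the elliptic relations $\Delta F=\mathrm{div}(\rho\dot u-\rho f)$ and $\mu\Delta w^{j,k}=\ldots$, a Lagrangian argument for the pointwise density bound, higher--order energy estimates for the $K(T)$ bound, and a time--slab argument for the large--time limits. Your reformulation $D_t\log\rho=-(F+P-P(\tilde\rho))/(\lambda+2\mu)$ is equivalent to the paper's $\frac{d}{dt}\Lambda(\rho)+P(\rho)-P(\tilde\rho)=-F$ with $\Lambda'(\rho)=(2\mu+\lambda(\rho))/\rho$, which is the natural variable here.

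There is, however, one point where your description of the key step would fail if executed literally. You claim the $F$--contribution to the density bound is ``bounded in essence by $\|F\|_{L^1_tL^\infty_x}$.'' But the only available estimate of that quantity is Proposition \ref{3VDD-full-P2.8}, namely
\begin{equation*}
\int_0^{T}\|F(\cdot,t)\|_{L^\infty}\,dt\leq C(M_q)(C_0+C_f)^{\frac{\theta(2+q-N)}{4+2q+Nq}}(1+T),
\end{equation*}
which grows linearly in $T$ and therefore cannot by itself give a density bound uniform in time, no matter how small $\varepsilon$ is. The paper resolves this by splitting: for $t\in[0,\tau_1]$ the $L^1_t L^\infty_x$ bound is indeed used (there the factor $(1+T)$ is harmless); for $t\geq\tau_1$ one instead needs the \emph{pointwise--in--time} smallness $\|F(\cdot,t)\|_{L^\infty}\leq C(\tau_1,M_q)(C_0+C_f)^{\theta(2+q-N)/(4+2q+Nq)}$, obtained from the weighted estimates $\sigma^{p_1}\int\rho|\dot u|^{2+q}dx\leq C(M_q)$ via the interpolation $\|F\|_{L^\infty}\lesssim\|F\|_{L^2}^{a}\|\nabla F\|_{L^{2+q}}^{1-a}$, and then a maximum--principle/contradiction argument in which the restoring term $P(\rho)-P(\tilde\rho)$ (positive at the putative first time $\rho$ reaches $\bar\rho_2$, negative when it reaches $\tfrac12\underline\rho_1$) dominates the small forcing $F$. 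You do invoke the sign condition as a ``one--sided dissipation,'' so the right idea is present, but you must make explicit that for large times the control is pointwise in $t$ rather than integrated; otherwise the argument does not close. A second, smaller imprecision: in your large--time step you speak of translating ``$L^2$--decay of $G(\rho)$'' into $L^4$--decay of $\rho-\tilde\rho$, but no decay of $G(\rho)$ is established (only the uniform energy bound); the paper instead derives $\int_1^\infty\int(|\rho-\tilde\rho|^4+|F|^4)\,dx\,dt<\infty$ from the transport inequality (\ref{3VDD-full-E2.25}) and averages over time slabs, and you would need this space--time integrability rather than interpolation against $G$.
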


\begin{rem}
  For example, we can choose that $P=A\rho^\gamma$ and
  $\lambda(\rho)=c\rho^\beta$ with $\gamma\geq 1$ and
  $\beta\geq1$, where $A$ and $c$ are two positive constants.
  Also, we can choose that $\lambda$ is a non-negative constant.
\end{rem}

\begin{rem}
Considering the case that the space domain
$\Omega\subset\mathbb{R}^2$ is bounded and $\lambda=\rho^\beta$,
$\beta>3$, Vaigant-Kazhikhov (\cite{Vaigant}, Theorems 1-2) obtained
the global existence of strong solutions when the initial data are
large and the initial density is bounded from zero. In this paper,
since the initial energy is small, we can use the similar argument
as that in the case $\lambda=$constant
\cite{Hoff2008,Hoff2005,Hoff1995,Hoff1995-2} to obtain some good a
priori estimates of the solution, and obtained the global existence
of classical solutions when the space domain is $\mathbb{R}^N$,
$N=2$ or $3$, the initial density may vanish in an open set and
$\beta\geq1$.
\end{rem}

The proof of Theorem \ref{3VDD-full-T1.1} bases on the derivation of
a priori estimates for the local solution. Specifically, in Section
\ref{3VDD-full-S2}, we fix a smooth, local in time solution for
which $0\leq\rho\leq\overline{\rho}$ and $A_1+A_2\leq
2(C_0+C_f)^\theta$, then obtain the estimate $A_1+A_2\leq
(C_0+C_f)^\theta$, and prove that the density remains in a compact
subset of $(0,\overline{\rho})$. Using the classical continuation
method, we can close these estimates.

Using the initial condition $u_0\in H^1$, we can obtain pointwise
bounds for $F$ in Proposition \ref{3VDD-full-P2.8}, which is the key
point of the a priori estimates. Because that the mass equation can
be transformed to the following form,
    \begin{equation}
      \frac{d}{dt}\Lambda(\rho(x(t),t))+P(\rho(x(t),t))-P(\tilde{\rho})=-F(x(t),t),\label{3VDD-full-E1.18}
    \end{equation}
where $\Lambda$ satisfies that $\Lambda(\tilde{\rho})=0$ and
$\Lambda'(\rho)=\frac{2\mu+\lambda(\rho)}{\rho}$, a curve $x(t)$
satisfies $\dot{x}(t)=u(x(t),t)$, thus pointwise bounds for the
density will therefore follow from  pointwise bounds for $F$.

In theorem \ref{2VDD-T1.1}, the constant $\varepsilon$ is
independent of $\underline{\rho}_1$.
  Thus, we can obtain the global existence of weak solutions to (\ref{3VDD-full-E1.1})--(\ref{3VDD-full-E1.2})
 with the nonnegative initial density $\rho_0\geq0$ (the two-dimensional result can be found in \cite{Zhang2008}).
\begin{defn}
We say that $(\rho,u)$ is a weak solution of
(\ref{3VDD-full-E1.1})--(\ref{3VDD-full-E1.2}), if $\rho$ and $u$
are suitably integrable and satisfy that
\begin{itemize}
    \item
        \begin{equation}
          \left.\int\rho\phi
          dx\right|^{t_2}_{t_1}=\int^{t_2}_{t_1}\int(\rho\phi_t+\rho
          u\cdot\nabla\phi)dxdt\label{3VDD-full-E1.3}
        \end{equation}
        for all times $t_2\geq t_1\geq0$ and all $\phi\in
        C^1_0(\mathbb{R}^N\times[t_1,t_2])$,
    \item
      \begin{eqnarray}
          &&\left.\int\rho u\psi
          dx\right|^{t_2}_{t_1}-\int^{t_2}_{t_1}\int\{\rho
          u\cdot\psi_t+\rho(u\cdot\nabla \psi
          )\cdot u+P\mathrm{div}\psi\}dxdt\nonumber\\
                &=&-\int^{t_2}_{t_1}\int
          \{\mu \partial_k
          u^j\partial_k\psi^j+(\mu+\lambda)\mathrm{div}u\mathrm{div}\psi
          -\rho f\psi\}dxdt\label{3VDD-full-E1.4}
        \end{eqnarray}
        for all times $t_2\geq t_1\geq0$ and all $\psi\in
        (C^1_0(\mathbb{R}^N\times[t_1,t_2]))^N$.
\end{itemize}
\end{defn}
Concerning the pressure $P$, viscosity coefficients $\mu$ and
$\lambda$, we fix $0<\tilde{\rho}<\bar{\rho}$ and assume that
\begin{equation}
      \left\{
      \begin{array}{l}
      P\in C^1([0,\bar{\rho}]),\ \lambda\in C^2([0,\bar{\rho}]),\\
       \mu>0,
      \ \ \lambda\in\left\{\begin{array}{ll}
        [0,\infty),& N=2,\cr
        [0,3\mu), & N=3,
      \end{array}
      \right. \textrm{for all }\rho\in[0,\bar{\rho}],\\
           P(0)=0,
              \ P'(\tilde{\rho})>0, \\
              (\rho-\tilde{\rho})[P(\rho)-P(\tilde{\rho})]>0,
                \ \rho\in[0,\tilde{\rho})\cup(\tilde{\rho},\bar{\rho}],\\
              P\in C^2([0,\bar{\rho}])
              \ \textrm{ or }\    \frac{P(\cdot)}{2\mu+\lambda(\cdot)}
              \ \textrm{ is a monotone function on}\ [0,\bar{\rho}].
      \end{array}
      \right.\label{2VDD-E1.5}
    \end{equation}
\begin{thm}\label{2VDD-T1.1}
  Assume that conditions (\ref{3VDD-full-E1.6})--(\ref{3VDD-full-E1.10}) and (\ref{2VDD-E1.5}) hold. Then,
  for a
  given  positive number $M$ (not necessarily small) and
  $\bar{\rho}_1\in(\tilde{\rho},\bar{\rho})$, there are positive
  numbers $\varepsilon$
  and  $\theta$, such that,  the Cauchy problem
  (\ref{3VDD-full-E1.1})--(\ref{3VDD-full-E1.2}) with
  the  initial data $(\rho_0,u_0)$ and external force $f$
  satisfying
    \begin{equation}
      \left\{
      \begin{array}{l}
       0\leq \inf \rho_0\leq\sup\rho_0\leq\bar{\rho}_1,\\
            C_0+C_f\leq\varepsilon,\\
                M_q\leq M,
      \end{array}
      \right.
    \end{equation}
 has a
global weak solution $(\rho,u)$ in the sense of
(\ref{3VDD-full-E1.3})--(\ref{3VDD-full-E1.4}) satisfying
    \begin{equation}
       C^{-1}\inf\rho_0\leq\rho\leq\bar{\rho},\
      \textrm{a.e.}\label{2VDD-E1.15}
    \end{equation}
        \begin{equation}
          \left\{
          \begin{array}{l}
            \rho-\tilde{\rho},\ \rho u\in C([0,\infty);H^{-1}),\\
                \nabla u\in L^2(\mathbb{R}^N\times[0,\infty)),
          \end{array}
          \right.
        \end{equation}
        \begin{equation}
          <u>^{\alpha,\frac{\alpha}{N+2\alpha}}_{\mathbb{R}^N\times[\tau,\infty)}+
          \sup_{t\geq \tau}\left(\|\nabla F(\cdot,t)\|_{L^2}+\|\nabla
          w(\cdot,t)\|_{L^2}\right)
          \leq C(\tau)(C_0+C_f)^\theta,\label{2VDD-E1.15-1}
        \end{equation}
         \begin{equation}
          \sup_{t>0}\int|\nabla u|^2dx+\int^\infty_0\int\sigma
          |\nabla\dot{u}|^2dxds\leq C,\label{2VDD-E1.15-2}
        \end{equation}
    \begin{equation}
      \int^T_0\|F(\cdot,t)\|_{L^\infty}dt\leq C(T),
    \end{equation}
    \begin{equation}
          <F>^{{\alpha'},\frac{{\alpha'}}{N+2{\alpha'}}}_{\mathbb{R}^N\times[\tau,T]}+
     <w>^{{\alpha'},\frac{{\alpha'}}{N+2{\alpha'}}}_{\mathbb{R}^N\times[\tau,T]}\leq
    C(\tau,T),\label{2VDD-E1.22}
    \end{equation}
where $\alpha\in(0,1)$ when $N=2$, $\alpha\in(0,\frac{1}{2}]$ when
$N=3$, $\tau >0$ and ${\alpha'}\in(0,\frac{2+q-N}{2+q}]$,
    \begin{eqnarray}
      &&\sup_{t>0}\int\left(
      \frac{1}{2}\rho|u|^2+|\rho-\tilde{\rho}|^2
      +\sigma|\nabla u|^2      \right)dx\nonumber\\
            &&+\int^\infty_0\int\left(
            |\nabla u|^2+\sigma|(\rho u)_t+\mathrm{div}(\rho u\otimes
            u)|^2+\sigma^N|\nabla\dot{u}|^2     \right)dxdt\nonumber\\
      &\leq& (C_0+C_f)^\theta,\label{2VDD-E1.17}
    \end{eqnarray}
                  \begin{equation}
    \lim_{t\rightarrow+\infty}\int\left(|\rho-\tilde{\rho}|^4+\rho|u|^4\right)(x,t)dx=0.
    \label{2VDD-full-E1.28}
    \end{equation}
In addition, in the case that $\inf\rho_0>0$, the term
$\int^\infty_0\int\sigma|\dot{u}|^2dxdt$ may be included on the left
hand side of (\ref{2VDD-E1.17}).
\end{thm}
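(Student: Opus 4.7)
My plan is to construct the weak solution by approximation from the classical solutions provided by Theorem \ref{3VDD-full-T1.1}, exploiting the key point emphasised just before the statement: the smallness threshold $\varepsilon$ in Theorem \ref{3VDD-full-T1.1} is independent of the lower bound $\underline{\rho}_1$ on the initial density, so that the a priori estimates of Section \ref{3VDD-full-S2} pass uniformly to the limit $\underline{\rho}_1 \to 0^+$.

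\emph{Regularisation and uniform estimates.} Fix a standard mollifier $\eta_\delta$ and set $\rho_0^\delta = \rho_0 \ast \eta_\delta + \delta$, $u_0^\delta = u_0 \ast \eta_\delta$, so that for $\delta$ small one has $\delta \leq \inf \rho_0^\delta \leq \sup \rho_0^\delta \leq \bar{\rho}_1$ and $(\rho_0^\delta - \tilde\rho, u_0^\delta) \in H^3$. Standard mollifier computations give $C_0^\delta \to C_0$ and $M_q^\delta \to M_q$ as $\delta \to 0^+$, so Theorem \ref{3VDD-full-T1.1} (with $\varepsilon$ enlarged slightly if necessary) yields a global classical solution $(\rho^\delta, u^\delta)$. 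I would then collect those a priori bounds from Section \ref{3VDD-full-S2} that do not see $\underline{\rho}_1$: the $L^\infty$ bound $\rho^\delta \leq \bar\rho$, the energy/dissipation estimates (\ref{2VDD-E1.17}) and (\ref{2VDD-E1.15-2}), the H\"older estimates (\ref{2VDD-E1.15-1}) and (\ref{2VDD-E1.22}) for $u^\delta,F^\delta,w^\delta$ obtained from the pointwise bounds on $F$ proved in the section (see the discussion surrounding (\ref{3VDD-full-E2.24})), and the $L^1_t L^\infty_x$ control of $F^\delta$. The lower bound in (\ref{2VDD-E1.15}) then follows by integrating (\ref{3VDD-full-E1.18}) along Lagrangian trajectories $\dot x = u^\delta$ and using that $\Lambda(\rho) \to -\infty$ as $\rho \to 0^+$, together with the uniform $L^1_t L^\infty_x$ bound on $F^\delta$.

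\emph{Passage to the limit.} Weak-$\ast$ compactness gives $\rho^\delta \stackrel{\ast}{\rightharpoonup} \rho$ in $L^\infty_{t,x}$ and $u^\delta \rightharpoonup u$ in $L^2_t H^1_x$. The uniform H\"older bounds away from $t=0$, combined with an Aubin--Lions argument using $\nabla u^\delta \in L^2_{t,x}$ and $(\rho^\delta u^\delta)_t$ controlled via the momentum equation and (\ref{2VDD-E1.17}), yield strong local convergence of $u^\delta$, $F^\delta$ and $w^\delta$ on $[\tau,T]\times K$ for any $0<\tau<T$ and bounded $K\subset\mathbb{R}^N$. The linear terms in the weak formulations (\ref{3VDD-full-E1.3})--(\ref{3VDD-full-E1.4}) then pass to the limit immediately.

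\emph{Main obstacle.} The delicate point is to identify the nonlinear weak limits of $P(\rho^\delta)$ and $\lambda(\rho^\delta)\mathrm{div}\,u^\delta$, which requires strong convergence of $\rho^\delta$ in $L^1_{\mathrm{loc}}$. Here the dichotomy in the last line of (\ref{2VDD-E1.5}) enters. If $P\in C^2([0,\bar\rho])$, the H\"older continuity of $F^\delta$ together with a Kazhikhov--Hoff pointwise comparison along characteristics via (\ref{3VDD-full-E1.18}) controls the oscillation of two regularisations and forces strong convergence through the damping term $P(\rho)-P(\tilde\rho)$ on the left of (\ref{3VDD-full-E1.18}). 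If instead $P(\cdot)/(2\mu+\lambda(\cdot))$ is monotone, a Lions--Feireisl effective-viscous-flux identity combined with a DiPerna--Lions renormalisation of the continuity equation produces strong convergence via the monotonicity trick applied to $\Phi(\rho)=P(\rho)/(2\mu+\lambda(\rho))$. Once $\rho^\delta \to \rho$ strongly, the nonlinear terms pass to the limit in (\ref{3VDD-full-E1.3})--(\ref{3VDD-full-E1.4}), the decay (\ref{2VDD-full-E1.28}) follows from uniform integrability together with (\ref{3VDD-full-E1.15-10}) for the approximate solutions, and the supplementary bound on $\int\sigma|\dot u|^2$ when $\inf\rho_0>0$ is inherited from the $\delta$-uniform estimates by lower semicontinuity.
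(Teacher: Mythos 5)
Your overall architecture is exactly the paper's: mollify the data as $\rho_0^\delta=j_\delta*\rho_0+\delta$, $u_0^\delta=j_\delta*u_0$, invoke Theorem \ref{3VDD-full-T1.1} with the crucial observation that $\varepsilon$ does not depend on $\underline{\rho}_1$, extract the $\delta$-uniform estimates of Section \ref{3VDD-full-S2}, get local uniform convergence of $u^\delta$, $F^\delta$, $w^\delta$ from the space--time H\"older bounds (the paper does this by averaging over balls and using $\int\int|\dot u^\delta|^2$, i.e.\ the same Ascoli--Arzel\`a mechanism you describe), and then confront the strong convergence of $\rho^\delta$ as the only genuinely nonlinear issue. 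Where you diverge is in how the two alternatives of hypothesis (\ref{2VDD-E1.5}) are used. The paper treats both cases inside a \emph{single} compensated-compactness argument: it introduces the defect $\Psi=\overline{\rho\ln\rho}-\rho\ln\rho\ge 0$, derives the transport identity (\ref{2VDD-E3.11}) whose source terms involve $F(\overline{\rho\nu}-\rho\nu)$, $F\rho(\nu-\bar\nu)$ and $\overline{\rho P_0}-\rho\overline{P_0}$ with $\nu=1/(2\mu+\lambda)$, $P_0=\nu(P-P(\tilde\rho))$, bounds the first two by $C|F|\Psi$ via a second-order Taylor expansion (this is where the uniform convexity of $s\ln s$ on $[0,\bar\rho]$ enters), and then disposes of the last term either by the same Taylor bound $|\overline{\rho P_0}-\rho\overline{P_0}|\le C\Psi$ when $P_0\in C^2$, or by the sign $\overline{\rho P_0}\ge\rho\overline{P_0}$ from Vaigant--Kazhikhov's monotonicity lemma. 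Gronwall together with $\int_0^T\|F\|_{L^\infty}dt\le C(T)$ (Proposition \ref{3VDD-full-P2.8}) then forces $\Psi\equiv 0$, hence strong $L^k$ convergence of the density. Your proposal for the monotone case (effective viscous flux plus renormalised continuity equation) is essentially this argument in Lions--Feireisl language, so that branch is fine.

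The weak link is your $P\in C^2$ branch. A ``Kazhikhov--Hoff pointwise comparison along characteristics'' of two regularisations does not go through as stated: the equations (\ref{3VDD-full-E1.18}) for $\rho^{\delta_1}$ and $\rho^{\delta_2}$ are integrated along \emph{different} characteristic flows (generated by $u^{\delta_1}$ and $u^{\delta_2}$), and the velocity fields are only H\"older continuous away from $t=0$, so the flows need not stay close and there is no damping mechanism available pointwise for the difference. To make a Cauchy-sequence argument work one would have to estimate $\|\rho^{\delta_1}-\rho^{\delta_2}\|_{L^2}$ in Eulerian form, and that again runs into the product $\lambda(\rho)\,\mathrm{div}\,u$ --- i.e.\ exactly the commutator the defect-measure computation is designed to handle. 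You should replace this branch by the paper's unified $\overline{\rho\ln\rho}$ argument, where the $C^2$ hypothesis is used only to get the one-sided Taylor bound on $\overline{\rho P_0}-\rho\overline{P_0}$; with that substitution the rest of your outline (identification of the nonlinear limits, the decay (\ref{2VDD-full-E1.28}) by repeating the proof of (\ref{3VDD-full-E1.15-10}), and the extra $\int\!\!\int\sigma|\dot u|^2$ bound when $\inf\rho_0>0$ by lower semicontinuity) is sound.
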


\begin{rem}
  Considering the case that $\lambda=$constant, Hoff-Santos
\cite{Hoff2008} and Hoff \cite{Hoff2005,Hoff1995,Hoff1995-2}
obtained the existence of global weak solutions. In this paper,
since the viscosity
    coefficient $\lambda$ is a function of the density $\rho$,     we
need a higher regularity condition $\nabla u_0\in L^2$, and use some
new methods to obtain a priori estimates of the solution. Using the
initial condition $\nabla u_0\in L^2$, we can obtain pointwise
bounds for $F$ in Proposition \ref{3VDD-full-P2.8}, which is the key
point of the a priori estimates. Using the compensated compactness
method \cite{Vaigant} and the estimate
$\int^T_0\|F(\cdot,t)\|_{L^\infty}dt\leq C(T)$, we can obtain the
strong limit of approximate densities $\{\rho^\delta\}$, see Section
\ref{2VDD-S3}.
\end{rem}

Then, we study the propagation of singularities in solutions
obtained in Theorem \ref{2VDD-T1.1}. Under the regularity estimates
of the solution in Theorem \ref{2VDD-T1.1}, and similar arguments as
that in \cite{Hoff2008,Zhang2008}, we can obtain Theorems
\ref{2VDD-T1.2}--\ref{2VDD-T1.6}, and omit the details.

In Theorem \ref{2VDD-T1.2}, we obtain that each point of
$\mathbb{R}^N$ determines a unique integral curve of the velocity
field at the initial time $t=0$, and that this system of integral
curves defines a locally bi-H\"{o}lder homeomorphism of any open
subset $\Omega$ onto its image $\Omega^t$ at each time $t>0$. From
this Lagrangean structure,  we can obtain that if there is a vacuum
domain at the
  initial time, then the vacuum domain will exist for all time, and
  vanishes as time goes to infinity, see Theorem \ref{2VDD-T1.4}.
 Also, in Theorem \ref{2VDD-T1.5}, we obtain that, if
the initial density has a limit at a point from a given side of a
continuous hypersurface, then at each later time both the density
and the divergence of the velocity have limits at the transported
point from the corresponding side of the transported hypersurface,
which is also a continuous manifold. If the limits from both sides
exist, then the Rankine-Hugoniot conditions hold in a strict
pointwise sense, showing that the jump in the
$(\lambda+2\mu)\mathrm{div}u$ is proportional to the jump in the
pressure (Theorem \ref{2VDD-T1.6}). This leads to a derivation of an
explicit representation for the strength of the jump in
$\Lambda(\rho)$ in non-vacuum domain.

\begin{thm}\label{2VDD-T1.2}
Assume that the conditions of Theorem \ref{2VDD-T1.1} hold.
\begin{description}
    \item[(1)] For each $t_0\geq0$ and $x_0\in\mathbb{R}^N$, there is a unique curve
    $X(\cdot;x_0,t_0)\in
    C^1((0,\infty);\mathbb{R}^N)\cap C^{\frac{6-N-2\alpha}{4}}([0,\infty);\mathbb{R}^N)
    $,  satisfying
        \begin{equation}
          X(t;x_0,t_0)=x_0+\int^t_{t_0} u(X(s;x_0,t_0),s)ds.\label{2VDD-E1.26}
        \end{equation}
    \item[(2)] Denote $X(t,x_0)=X(t;x_0,0)$. For each $t>0$ and any open set $\Omega\subset\mathbb{R}^N$,
    $\Omega^t=X(t,\cdot)\Omega$ is open and the map $x_0\longmapsto
    X(t,x_0)$ is a homeomorphism of $\Omega$ onto $\Omega^t$.
    \item[(3)] For any $0\leq t_1,t_2\leq T$, the map $X(t_1,y)\rightarrow
    X(t_2,y)$ is H\"{o}lder continuous from $\mathbb{R}^N$ onto
    $\mathbb{R}^N$. Specifically, for any $y_1,y_2\in
    \mathbb{R}^N$,
        \begin{equation}
    |X(t_2,y_2)-X(t_2,y_1)|\leq \exp(1-e^{-C (1+T)})|X(t_1,y_2)-X(t_1,y_1)|^{e^{-C
    (1+T)}}.
        \end{equation}
    \item[(4)] Let $\mathcal{M}\subset\mathbb{R}^N$ be a $C^\beta$
    $(N-1)$-manifold, where $\beta\in[0,1)$. Then, for any $t>0$,
    $\mathcal{M}^t=X(t,\cdot)\mathcal{M}$ is a $C^{\beta'}$
    $(N-1)$-manifold, where $\beta'=\beta e^{-C
    (1+t)}$.
\end{description}
\end{thm}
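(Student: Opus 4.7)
The plan is to turn the quantitative estimates of Theorem~\ref{2VDD-T1.1} into an Osgood-type modulus of continuity for $u$ strong enough to yield a unique Lagrangian flow, and then to read off the four assertions from a Gr\"onwall inequality of Yudovich type.

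For part~(1), fix $\tau>0$. By the decomposition (\ref{3VDD-full-E2.24}), $\nabla u$ is a sum of Riesz transforms applied to $F$, to the components of $w$, and to $P(\rho)-P(\tilde{\rho})\in L^\infty$. Upgrading the $L^\infty_tL^2_x$ bounds on $\nabla F,\nabla w$ from (\ref{2VDD-E1.15-1}) through Calder\'on--Zygmund and John--Nirenberg yields the log-Lipschitz estimate
\begin{equation*}
|u(x,t)-u(y,t)|\leq C(\tau)\,|x-y|\,\bigl(1+|\log|x-y||\bigr),\qquad t\geq\tau.
\end{equation*}
Since $u\in L^\infty_tL^2_x$ and is $\alpha$-H\"older with decay at infinity, it is globally bounded on $\mathbb{R}^N\times[\tau,\infty)$. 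Peano plus Osgood then produce a unique $C^1$ integral curve of (\ref{2VDD-E1.26}) there. To extend continuously down to $t=0$ with the modulus $t^{(6-N-2\alpha)/4}$, I interpolate between $\|u\|_{L^2}\leq C$ and a weighted bound on $\|D^2u\|_{L^2}$ coming from $\|\nabla F\|_{L^2}+\|\nabla w\|_{L^2}$; Gagliardo--Nirenberg yields $\|u(\cdot,t)\|_{L^\infty}\lesssim t^{-(N+2\alpha-2)/4}$, and integration in $t$ produces the required H\"older modulus.

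For parts (2) and (3), take two trajectories $X_i(t)=X(t,y_i)$ and set $r(t)=|X_2(t)-X_1(t)|$. The log-Lipschitz bound gives $r'(t)\leq Cr(t)(1-\log r(t))$ as long as $r(t)<1$; the substitution $\psi=\log r$ linearizes this to $(1-\psi)'\geq -C(1-\psi)$, which integrates to
\begin{equation*}
r(t_2)\leq \exp\bigl(1-e^{-C(1+T)}\bigr)\,r(t_1)^{\exp(-C(1+T))},
\end{equation*}
which is precisely part~(3). Reversing time in (\ref{2VDD-E1.26}) and rerunning the argument produces a backward flow that is the pointwise inverse of $X(t,\cdot)$, with the same H\"older exponent; hence $X(t,\cdot)$ is a bi-H\"older homeomorphism of $\mathbb{R}^N$, and $\Omega^t$ is open whenever $\Omega$ is, proving~(2). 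For~(4), a $C^\beta$ $(N-1)$-manifold $\mathcal{M}$ is locally the image of a $C^\beta$ embedding $\varphi:U\subset\mathbb{R}^{N-1}\to\mathbb{R}^N$, so $\mathcal{M}^t$ is locally the image of $X(t,\cdot)\circ\varphi$; the H\"older composition rule applied with exponent $e^{-C(1+t)}$ gives a local parametrization of class $C^{\beta e^{-C(1+t)}}$, and injectivity is inherited from the bi-H\"older property, yielding $\beta'=\beta e^{-C(1+t)}$.

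The main obstacle is the log-Lipschitz bound above: mere $\alpha$-H\"older control of $u$ does not suffice for uniqueness of characteristics. I must exploit the superior regularity of the effective flux and vorticity, converting $\nabla F,\nabla w\in L^2$ into BMO control of $\nabla u$ and closing via John--Nirenberg; the pressure contribution $P(\rho)$, which is only in $L^\infty$ and enters (\ref{3VDD-full-E2.24}) through a derivative, is the most delicate piece and requires careful BMO treatment. A secondary difficulty is matching the exponent $(6-N-2\alpha)/4$ at $t=0$, where the estimates degenerate; Gagliardo--Nirenberg interpolation absorbs the $\sigma^{-1/2}$ singularity into an integrable power of $t$.
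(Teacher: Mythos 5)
Your overall architecture is the right one and is essentially what the paper intends (it defers the details to Hoff--Santos and \cite{Zhang2008}): use the decomposition (\ref{3VDD-full-E2.24}) to get a log-Lipschitz modulus for $u$, run Osgood for uniqueness, and integrate the resulting differential inequality for $r(t)=|X_2-X_1|$ to get the bi-H\"older flow; your Gr\"onwall computation for part (3) and the reduction of (2) and (4) to it are fine. However, two of your intermediate mechanisms are wrong as stated. First, the log-Lipschitz bound does not follow from $\nabla F,\nabla w\in L^\infty_tL^2_x$ ``through Calder\'on--Zygmund and John--Nirenberg'': in $\mathbb{R}^3$, $\nabla F\in L^2$ only gives $F\in L^6$, and a zero-order singular integral applied to an $L^6$ function is not in BMO. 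What puts $\nabla u$ in BMO is that the sources in (\ref{3VDD-full-E2.24}), namely $(F+P-P(\tilde\rho))/(\lambda+2\mu)$ and $w$, are themselves in $L^\infty$; this is exactly Proposition \ref{3VDD-full-P2.8}, which requires $\nabla F,\nabla w\in L^{2+q}$ with $2+q>N$ (hence the role of $q$), not $L^2$. Moreover, to get uniqueness of the curve issued from $t_0=0$ and the H\"older estimate of part (3) with $t_1=0$, fixing $\tau>0$ is not enough: you need the time-integrability $\int_0^T(\|F\|_{L^\infty}+\|w\|_{L^\infty})\,dt\leq C(T)$ of (\ref{3VDD-full-E2.72}) so that the log-Lipschitz coefficient is integrable down to $t=0$; this is also where the constant $C(1+T)$ in the exponent comes from.

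Second, your derivation of the modulus at $t=0$ rests on a bound that is unavailable in this setting. For the weak solutions of Theorem \ref{2VDD-T1.1} the density is allowed to be discontinuous (that is the whole point of Theorems \ref{2VDD-T1.4}--\ref{2VDD-T1.6}), so $\nabla P(\rho)$ and $\nabla\lambda(\rho)$ are not functions and $\|D^2u\|_{L^2}$ is not controlled by $\|\nabla F\|_{L^2}+\|\nabla w\|_{L^2}$; in addition, interpolating $L^\infty$ between $L^2$ and $\dot H^2$ forces the exponent $N/4$ on the second factor and does not produce the rate $\sigma^{-(N+2\alpha-2)/4}$. That rate is nevertheless correct, and the paper already contains the right route: $\|u\|_{L^\infty}\lesssim\|u\|_{L^2}+\langle u\rangle^\alpha$ (the unit-ball averaging trick used in Section \ref{2VDD-S3}), together with (\ref{3VDD-full-E2.35}), $\sup_t\|\nabla u\|_{L^2}\leq C(M_q)$ from Proposition \ref{3VDD-full-P2.5} (this is where the hypothesis $\nabla u_0\in L^2$ enters), and $\|\sqrt{\rho}\dot u\|_{L^2}\lesssim\sigma^{-1/2}$ from Proposition \ref{3VDD-full-P2.6}, which give $\langle u(\cdot,t)\rangle^\alpha\lesssim\sigma(t)^{-(N-2+2\alpha)/4}$ and hence $\int_0^\tau\|u\|_{L^\infty}dt\lesssim\tau^{(6-N-2\alpha)/4}$. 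With these two corrections your argument closes.
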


\begin{thm}\label{2VDD-T1.3}
Assume that the conditions of Theorem \ref{2VDD-T1.1} hold. Let $V$
be a nonempty open set in $\mathbb{R}^N$. If
$\mathrm{essinf}\rho_0|_V\geq \underline{\rho}>0$, then there is a
positive number $\underline{\rho}^{-}$ such that,
    $$
    \rho(\cdot,t)|_{V^t}\geq \underline{\rho}^-,
    $$
for all $t>0$, where $V^t=X(t,\cdot)V$.
\end{thm}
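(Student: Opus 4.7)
The proof propagates the pointwise lower bound on $\rho_{0}$ along Lagrangian flow lines by means of the scalar transport--reaction relation~(\ref{3VDD-full-E1.18}). Fix $x_{0}\in V$ with $\rho_{0}(x_{0})\geq\underline{\rho}$ (which holds for a.e.\ such $x_{0}$), let $x(t)=X(t,x_{0})$ be the trajectory provided by Theorem~\ref{2VDD-T1.2}, and set $\phi(t):=\Lambda(\rho(x(t),t))$. Since $\Lambda'(\rho)=(2\mu+\lambda(\rho))/\rho$ and $\Lambda(\tilde{\rho})=0$, the map $\Lambda$ is strictly increasing on $(0,\bar{\rho}]$ with $\Lambda(\rho)\to-\infty$ as $\rho\to 0^{+}$; thus a positive lower bound for $\rho(x(t),t)$ is equivalent to a lower bound for $\phi(t)$, and we start from $\phi(0)\geq\Lambda(\underline{\rho})>-\infty$.

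Along the trajectory, (\ref{3VDD-full-E1.18}) reads $\phi'(t)=-[P(\rho(x(t),t))-P(\tilde{\rho})]-F(x(t),t)$. The sign assumption in~(\ref{2VDD-E1.5}) together with the continuity of $P$ furnish $\rho_{*}\in(0,\tilde{\rho})$ and $\delta>0$ such that $P(\tilde{\rho})-P(\rho)\geq 2\delta$ for every $\rho\in[0,\rho_{*}]$. Hence, on any interval $[t_{1},t_{2}]$ along which $\rho(x(s),s)\leq \rho_{*}$ holds throughout,
\begin{equation*}
\phi(t_{2})-\phi(t_{1})\;\geq\; 2\delta(t_{2}-t_{1})-\int_{t_{1}}^{t_{2}}\bigl|F(x(s),s)\bigr|\,ds.
\end{equation*}
Combined with the bound $\int_{0}^{T}\|F(\cdot,t)\|_{L^{\infty}}\,dt\leq C(T)$ furnished by Theorem~\ref{2VDD-T1.1}, this immediately yields $\phi(t)\geq -C_{T}$ on $[0,T]$ and so a finite-time lower bound $\rho(x(t),t)\geq \Lambda^{-1}(-C_{T})>0$.

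The main obstacle is upgrading this finite-time bound to one uniform in $t$. For this I would exploit the smallness of $C_{0}+C_{f}$ together with the large-time estimates~(\ref{2VDD-E1.15-1}): the uniform bound $\sup_{t\geq\tau}\|\nabla F(\cdot,t)\|_{L^{2}}\leq C(\tau)(C_{0}+C_{f})^{\theta}$, combined with Sobolev embedding and the $L^{4}$-decay of $\rho-\tilde{\rho}$ from~(\ref{2VDD-full-E1.28}), controls $\|F(\cdot,t)\|_{L^{p}}$ for some $p>N$ uniformly in $t\geq\tau$ by a quantity proportional to $(C_{0}+C_{f})^{\theta}$, so that the time-average of $|F|$ along a trajectory after some $T_{0}$ is some $\varepsilon_{0}\ll\delta$. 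Iterating the displayed inequality over the maximal low-density intervals then precludes $\phi$ from drifting to $-\infty$ and yields the desired uniform lower bound $\underline{\rho}^{-}>0$. This long-time propagation step is the delicate one; the argument closely parallels those in \cite{Hoff2008,Zhang2008} which the paper invokes in passing for Theorems~\ref{2VDD-T1.2}--\ref{2VDD-T1.6}.
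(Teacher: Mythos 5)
Your overall strategy --- propagating the lower bound along particle paths via the $\Lambda$-equation (\ref{3VDD-full-E1.18}), using $\int_0^T\|F\|_{L^\infty}\,dt\le C(T)$ on finite time intervals, and a smallness-versus-sign argument for large times --- is the right one and matches what the paper intends (it is the local-in-space analogue of the pointwise density bound proved in Proposition \ref{3VDD-full-P2.3}). The finite-time part of your argument is correct.

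The large-time step, which you yourself flag as the delicate one, has a genuine gap as written. You propose to control $F$ along the trajectory by bounding $\|F(\cdot,t)\|_{L^p}$ for some finite $p>N$, using $\sup_{t\ge\tau}\|\nabla F(\cdot,t)\|_{L^2}\le C(\tau)(C_0+C_f)^\theta$ and Sobolev embedding. This cannot work: an $L^p_x$ bound with $p<\infty$ gives no information on the value $F(X(s;x_0),s)$ at the single point that enters the ODE along the trajectory, and in $\mathbb{R}^3$ the information $F\in L^2\cap\dot H^1$ only reaches $L^6$, not $L^\infty$. The ingredient you actually need is already in the paper: Proposition \ref{3VDD-full-P2.8}, i.e.\ estimate (\ref{3VDD-full-E2.72-1}), which interpolates $\|F\|_{L^\infty}$ between $\|F\|_{L^2}$ and $\|\nabla F\|_{L^{2+q}}$ with $2+q>N$, the latter controlled through $\|\rho\dot u\|_{L^{2+q}}$ by (\ref{3VDD-full-E2.66}); combined with (\ref{3VDD-full-E2.22}) this gives the uniform-in-time smallness $\sup_{t\ge\tau_1}\|F(\cdot,t)\|_{L^\infty}\le C(\tau_1,M_q)(C_0+C_f)^{\theta(2+q-N)/(4+2q+Nq)}$, exactly as in part (II) of the proof of Proposition \ref{3VDD-full-P2.3}. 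Once you have this, your iteration over maximal low-density intervals is unnecessary: a direct barrier argument at the level $\rho_*=\frac12\min(\underline\rho,\tilde\rho)$ closes the proof, since at a first crossing time $t_1$ one has $\frac{d}{dt}\Lambda(\rho(x(t),t))\big|_{t=t_1}\le0$ while $P(\rho_*)-P(\tilde\rho)\le\max_{[0,\rho_*]}P-P(\tilde\rho)<0$, contradicting the smallness of $F(x(t_1),t_1)$. (Note also that your $2\delta$ lower bound on $P(\tilde\rho)-P$ is only available below $\tilde\rho$, so the barrier must be taken at $\frac12\min(\underline\rho,\tilde\rho)$ rather than at a level tied to $\underline\rho$ alone.) Finally, since Theorem \ref{2VDD-T1.1} produces $(\rho,u)$ as a limit of smooth solutions $(\rho^\delta,u^\delta)$, the argument should be run on the approximations, for which (\ref{3VDD-full-E1.18}) is literally valid and $\rho_0^\delta\ge\underline\rho/2$ on compact subsets of $V$ for small $\delta$, and then passed to the limit using the convergence of the flows $X^\delta\to X$; you do not address this step.
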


\begin{thm}\label{2VDD-T1.4}
Assume that the conditions of Theorem \ref{2VDD-T1.1} hold. Let $U$
be a nonempty open set in $\mathbb{R}^N$. Assume that $\rho_0|_U
=0$. Then,
    $$
    \rho(\cdot,t)|_{U^t}=0,
    $$
for all $t>0$, where $U^t=X(t,\cdot)U$. Furthermore, we have
       \begin{equation}
    \lim_{t\rightarrow\infty}|\{x\in\mathbb{R}^N|\rho(x,t)=0\}|=0.\label{2VDD-E1.29}
    \end{equation}
\end{thm}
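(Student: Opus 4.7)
The plan is to split the theorem into two independent parts and treat them in order. The first assertion, that $\rho(\cdot,t)\equiv 0$ on $U^t$ for every $t>0$, is a direct consequence of the Lagrangian structure furnished by Theorem \ref{2VDD-T1.2}. Rewriting the mass equation along an integral curve $X(\cdot;x_0,0)$ of the velocity field gives
\begin{equation*}
\frac{d}{dt}\rho(X(t,x_0),t) = -\rho(X(t,x_0),t)\,\mathrm{div}\,u(X(t,x_0),t),
\end{equation*}
or equivalently
\begin{equation*}
\rho(X(t,x_0),t) = \rho_0(x_0)\exp\left(-\int_0^t \mathrm{div}\,u(X(s,x_0),s)\,ds\right).
\end{equation*}
The required integrability of $\mathrm{div}\,u$ is already available from Theorem \ref{2VDD-T1.1}: using $F=(\lambda+2\mu)\mathrm{div}\,u - P(\rho)+P(\tilde{\rho})$, together with $0\leq\rho\leq\bar{\rho}$ and $\int^T_0\|F(\cdot,t)\|_{L^\infty}dt\leq C(T)$, one sees that $\mathrm{div}\,u\in L^1_{loc}([0,\infty);L^\infty)$. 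Hence the exponential factor is finite and the formula gives $\rho(X(t,x_0),t)=0$ for all $x_0\in U$. Combining this with the homeomorphism property $U^t=X(t,\cdot)U$ of Theorem \ref{2VDD-T1.2} finishes the first assertion.

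For the asymptotic statement (\ref{2VDD-E1.29}), I would appeal directly to the large-time decay (\ref{2VDD-full-E1.28}) already established in Theorem \ref{2VDD-T1.1}, namely
\begin{equation*}
\lim_{t\rightarrow+\infty}\int_{\mathbb{R}^N}|\rho(x,t)-\tilde{\rho}|^4\,dx = 0.
\end{equation*}
On the vacuum set $V(t):=\{x\in\mathbb{R}^N:\rho(x,t)=0\}$ we have $|\rho-\tilde{\rho}|=\tilde{\rho}$, so
\begin{equation*}
\tilde{\rho}^4\,|V(t)| = \int_{V(t)}|\rho(x,t)-\tilde{\rho}|^4\,dx \leq \int_{\mathbb{R}^N}|\rho(x,t)-\tilde{\rho}|^4\,dx.
\end{equation*}
Since $\tilde{\rho}>0$ is fixed, dividing by $\tilde{\rho}^4$ and letting $t\rightarrow+\infty$ gives $|V(t)|\rightarrow 0$, which is exactly (\ref{2VDD-E1.29}).

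The only non-routine point is the rigorous justification of the pointwise representation of $\rho$ along characteristics for the weak solution of Theorem \ref{2VDD-T1.1}; $\rho$ may vanish and $u$ has only the regularity recorded there. However, the velocity field satisfies $\nabla u\in L^2_{t,x}$ and $\mathrm{div}\,u\in L^1_t L^\infty_x$, well within the scope of the DiPerna--Lions theory of renormalized solutions, so the push-forward formula holds for $\rho_0$-a.e.\ initial point and the argument goes through. No additional obstacle arises in the asymptotic part, which is a one-line consequence of the decay already in hand.
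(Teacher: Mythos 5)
Your proof is correct and follows essentially the route the paper intends (the paper omits the proof, deferring to the Lagrangean-structure arguments of Hoff--Santos and \cite{Zhang2008}): the exponential representation of $\rho$ along particle paths, with $\mathrm{div}\,u\in L^1_tL^\infty_x$ extracted from the bound $\int_0^T\|F\|_{L^\infty}dt\leq C(T)$, gives the propagation of vacuum, and the Chebyshev argument from the $L^4$ decay (\ref{2VDD-full-E1.28}) gives (\ref{2VDD-E1.29}). The one cosmetic difference is in justifying the representation along characteristics for the weak solution: you invoke DiPerna--Lions renormalization (which works, since the bounded divergence also makes the flow quasi-measure-preserving, so the a.e.\ statement on $U$ transfers to an a.e.\ statement on $U^t$), whereas the cited references obtain the same conclusion by establishing the push-forward identity $\rho(\cdot,t)\,dx=X(t,\cdot)_{\#}(\rho_0\,dx)$ for the smooth approximations $(\rho^\delta,u^\delta)$ and passing to the limit using the uniform convergence of $u^\delta$ and the strong convergence of $\rho^\delta$; either justification closes the argument.
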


Recall that  the oscillation of $g$  at $x$ with respect to $E$ is
defined by (as in \cite{Hoff2008})
    $$
    \mathrm{osc}(g;x,E)=\lim_{R\rightarrow0}
    \left(\mathrm{esssup}g|_{E\cap B_R(x)}-\mathrm{essinf}g|_{E\cap
    B_R(x)}
    \right),
    $$
where $x\in \bar{E}$ and $g$ maps an open set $E\subset
\mathbb{R}^N$ into $\mathbb{R}$. We shall say that $g$ is continuous
at an interior point $x$ of $E$, if osc$(g;x,E)=0$.
\begin{thm}\label{2VDD-T1.5}
  Assume that the conditions of Theorem \ref{2VDD-T1.1} hold. Let
  $E\subset\mathbb{R}^N$ be open  and $x_0\in\bar{E}$. If
  $\mathrm{osc}(\rho_0;x_0,E)=0$, then
  $\mathrm{osc}(\rho(\cdot,t);X(t,x_0),X(t,\cdot)E)=0$. In
  particular, if $x_0\in E$ and $\rho_0$ is continuous at $x_0$,
  then $\rho(\cdot,t)$ is continuous at $X(t,x_0)$.
\end{thm}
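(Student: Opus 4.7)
The plan is to compare the densities on two nearby particle paths via the Lagrangian mass equation~\eqref{3VDD-full-E1.18} together with the H\"older and $L^{1}_{t}L^{\infty}_{x}$ bounds on $F$ from Theorem~\ref{2VDD-T1.1}, then to pull the resulting pointwise statement back through the homeomorphism $X(t,\cdot)$ supplied by Theorem~\ref{2VDD-T1.2}. Since $\operatorname{osc}(\rho_{0};x_{0},E)=0$, the monotonicity in $R$ of $\operatorname{esssup}\rho_{0}|_{E\cap B_{R}(x_{0})}$ and $\operatorname{essinf}\rho_{0}|_{E\cap B_{R}(x_{0})}$ forces the essential limit $\rho_{0}^{\ast}:=\operatorname{esslim}_{y\to x_{0},\,y\in E}\rho_{0}(y)$ to exist in $[0,\bar{\rho}]$. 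Integrating~\eqref{3VDD-full-E1.18} along $s\mapsto X(s,y)$ gives
$$
\Lambda(\rho(X(t,y),t))=\Lambda(\rho_{0}(y))-\int_{0}^{t}\bigl[P(\rho(X(s,y),s))-P(\tilde{\rho})+F(X(s,y),s)\bigr]\,ds,
$$
which is the single identity driving the argument.

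I split according to the value of $\rho_{0}^{\ast}$. If $\rho_{0}^{\ast}>0$, I shrink $R$ so that $V:=E\cap B_{R}(x_{0})$ is a nonempty open set with $\operatorname{essinf}\rho_{0}|_{V}\ge\rho_{0}^{\ast}/2$; Theorem~\ref{2VDD-T1.3} then produces $\underline{\rho}^{-}>0$ with $\rho(\cdot,t)\ge\underline{\rho}^{-}$ a.e.\ on $V^{t}$, so $\Lambda$ is bi-Lipschitz on the available density range and it suffices to bound $\operatorname{osc}(\Lambda(\rho(\cdot,t)))$. Subtracting the identity at $y_{1},y_{2}\in V$, using $|P(\rho)-P(\rho')|\le C|\Lambda(\rho)-\Lambda(\rho')|$ on this range, and applying Gronwall yields
$$
|\Lambda(\rho(X(t,y_{1}),t))-\Lambda(\rho(X(t,y_{2}),t))|\le e^{Ct}\Bigl[|\Lambda(\rho_{0}(y_{1}))-\Lambda(\rho_{0}(y_{2}))|+\int_{0}^{t}|F(X(s,y_{1}),s)-F(X(s,y_{2}),s)|\,ds\Bigr].
$$
The first bracketed term tends to $0$ as $R\to 0$ by the hypothesis on the oscillation. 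For the $F$-integral I split at a threshold $\tau\in(0,t)$: on $[0,\tau]$ I bound it by $2\int_{0}^{\tau}\|F(\cdot,s)\|_{L^{\infty}}\,ds$, made small by choosing $\tau$ small thanks to the Theorem~\ref{2VDD-T1.1} bound $\int_{0}^{T}\|F\|_{L^{\infty}}\,ds\le C(T)$; on $[\tau,t]$ the H\"older estimate~\eqref{2VDD-E1.22} combined with the H\"older contraction of the flow from Theorem~\ref{2VDD-T1.2}(3) gives $|F(X(s,y_{1}),s)-F(X(s,y_{2}),s)|\le C(\tau,t)|y_{1}-y_{2}|^{\alpha'\exp(-C(1+s))}$, which is small for small $R$. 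If instead $\rho_{0}^{\ast}=0$, then $\Lambda(\rho_{0}(y))\to-\infty$ as $y\to x_{0}$ in $E$, while the right-hand side integrals remain uniformly bounded (by $t\sup_{[0,\bar\rho]}|P-P(\tilde\rho)|+C(T)$); hence $\Lambda(\rho(X(t,y),t))\to-\infty$ and so $\rho(X(t,y),t)\to 0$, and the oscillation vanishes trivially.

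To pass to the target set $X(t,\cdot)E$, I apply Theorem~\ref{2VDD-T1.2}(3) with the roles of $t_{1},t_{2}$ reversed: the inverse flow is H\"older continuous, so for every $R'>0$ there is $R(R')\to 0$ with $X(t,\cdot)E\cap B_{R'}(X(t,x_{0}))\subset X(t,\cdot)(E\cap B_{R(R')}(x_{0}))$, and letting $R'\to 0$ yields $\operatorname{osc}(\rho(\cdot,t);X(t,x_{0}),X(t,\cdot)E)=0$; continuity at an interior point then follows because $X(t,x_{0})\in X(t,\cdot)E$ when $x_{0}\in E$. The main obstacle I foresee is the initial-time layer: the H\"older constant $C(\tau,T)$ in~\eqref{2VDD-E1.22} degenerates as $\tau\to 0^{+}$, but this is precisely what the peeling at $\tau$ and the integrable $L^{\infty}_{x}$ control on $F$ are designed to absorb, so no additional regularity of $F$ at $t=0$ is required.
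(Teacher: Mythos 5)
Your overall strategy is exactly the one the paper intends (it omits the proof and refers to Hoff--Santos and \cite{Zhang2008}): integrate the Lagrangian mass identity (\ref{3VDD-full-E1.18}) along particle paths, compare two nearby paths via a Gronwall argument using $|P(\rho)-P(\rho')|\le C|\Lambda(\rho)-\Lambda(\rho')|$ on a density range bounded away from zero, control the $F$-difference by splitting at a time $\tau$ (absolute continuity of $\int_0^\tau\|F\|_{L^\infty}\,ds$ near $0$, the H\"older bound (\ref{2VDD-E1.22}) together with Theorem \ref{2VDD-T1.2}(3) on $[\tau,t]$), treat $\rho_0^\ast=0$ by $\Lambda\to-\infty$, and pull back through the inverse flow, which is H\"older by Theorem \ref{2VDD-T1.2}(3) with $t_1,t_2$ reversed. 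The case split, the Gronwall estimate, and the $\tau$-peeling are all sound.

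There is, however, one genuine gap: the oscillation is defined through \emph{essential} suprema and infima, and your argument only controls $\rho(X(t,y),t)$ for a.e.\ $y$ (those $y$ along whose trajectories the integrated identity holds and at which $\rho_0(y)$ lies between the essinf and esssup). To conclude anything about $\mathrm{esssup}\,\rho(\cdot,t)$ over $X(t,\cdot)E\cap B_{R'}(X(t,x_0))$ you must know that $X(t,\cdot)$ carries the exceptional null set of bad $y$'s to a null set; this does \emph{not} follow from Theorem \ref{2VDD-T1.2} alone, since a homeomorphism that is merely H\"older with exponent $e^{-C(1+t)}<1$ can map a Lebesgue-null set onto a set of positive measure. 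The missing ingredient is the volume (Jacobian) estimate $e^{-\int_0^t\|\mathrm{div}u\|_{L^\infty}ds}\,|A|\le|X(t,\cdot)A|\le e^{\int_0^t\|\mathrm{div}u\|_{L^\infty}ds}\,|A|$, which is available here because $\mathrm{div}u=(F+P-P(\tilde\rho))/(\lambda+2\mu)$ and $\int_0^T\|F\|_{L^\infty}dt\le C(T)$, so that $X(t,\cdot)$ and its inverse preserve null sets; this is how \cite{Hoff2008} closes the same step. A second, smaller elision is that the integrated identity
$\Lambda(\rho(X(t,y),t))=\Lambda(\rho_0(y))-\int_0^t[P-P(\tilde\rho)+F](X(s,y),s)\,ds$
is only immediate for the smooth approximations $(\rho^\delta,u^\delta)$; for the weak solution of Theorem \ref{2VDD-T1.1} it must be obtained by passing to the limit, using the uniform convergence of $u^\delta$ and $F^\delta$ on compact sets in $t>0$, the strong convergence of $\rho^\delta$, and the uniform $L^1_tL^\infty_x$ bound on $F^\delta$ to handle the initial layer. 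With these two points supplied, your proof is complete and coincides with the intended argument.
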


Now, let $\mathcal{M}$ be a $C^0$ $(N-1)$-manifold in $\mathbb{R}^N$
and
 $x_0\in\mathcal{M}$. Then there is a neighborhood $G$ of $x_0$
which is the disjoint union $G=(G\cap\mathcal{M})\cup E_+\cup E_-$,
where $E_{\pm}$ are open and $x_0$ is a limit point of each. If
osc$(g;x_0,E_+)=0$, then the common value $g(x_0+,t)$ is the
one-sided limit of $g$ at $x_0$ from the plus-side of $\mathcal{M}$,
and similar for the one-sided limit $g(x_0-,t)$ from the minus-side
of $\mathcal{M}$. If both of these limits exist, then the difference
$[g(x_0)]:=g(x_0+)-g(x_0-)$ is the jump in $g$ at $x_0$ with respect
to $\mathcal{M}$ (see \cite{Hoff2008}). Then, we can obtain the
following result about the propagation of singularities in
solutions.

\begin{thm}\label{2VDD-T1.6}
  Let $(\rho,u)$ as in Theorem \ref{2VDD-T1.1},
  $\mathcal{M}$ be a $C^0$ $(N-1)$-manifold and $x_0\in \mathcal{M}$.

  (a) If $\rho_0$ has a one-sided limit at $x_0$ from the
  plus-side of $\mathcal{M}$, then for each $t>0$, $\rho(\cdot,t)$
  and $\mathrm{div}u(\cdot,t)$ have one-sided limits at $X(t,x_0)$
  from the plus-side of the $C^0$ $(N-1)$-manifold
  $X(t,\cdot)\mathcal{M}$ corresponding to the choice
  $E_+^t=X(t,\cdot)E_+$. The map $t\mapsto \rho(X(t,x_0)+,t)$ is
  in $C^{\frac{1}{4}}([0,\infty))\cap C^1((0,\infty))$ and the map $t\mapsto \mathrm{div}u(X(t,x_0)+,t)$ is
  locally H\"{o}lder continuous on $(0,\infty)$.

  (b) If both one-sided limits $\rho_0(x_0\pm)$ of $\rho_0$ at
  $x_0$ with respect to $\mathcal{M}$ exist, then for each $t>0$,
  the jumps in $P(\rho(\cdot,t))$ and $\mathrm{div}u(\cdot,t)$ at
  $X(t,x_0)$ satisfy the Rankine-Hugonoit condition
        \begin{equation}
          [(2\mu+\lambda(\rho(X(t,x_0),t)))\mathrm{div}u(X(t,x_0),t)]
          =[P(\rho(X(t,x_0),t))].\label{2VDD-E1.31}
        \end{equation}

  (c) Furthermore, if $\rho_0(x_0\pm)\geq\underline{\rho}>0$, then the jump in
   $\Lambda(\rho)$ satisfies the representation
    \begin{equation}
      [\Lambda(\rho(X(t,x_0),t))]=\exp\left(
      -\int^t_0a(\tau,x_0)d\tau
      \right)[\Lambda(\rho_0(x_0))]
    \end{equation}
  where
  $a(t,x_0)=\frac{[P(\rho(X(t,x_0),t))]}{[\Lambda(\rho(X(t,x_0),t))]}$.
\end{thm}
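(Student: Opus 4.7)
The plan is to adapt the characteristic-ODE argument of Hoff--Santos \cite{Hoff2008} to the density-dependent viscosity setting, the new ingredients being the parabolic Hölder estimate (\ref{2VDD-E1.22}) on the effective viscous flux $F$ together with $\int^T_0\|F(\cdot,t)\|_{L^\infty}dt\le C(T)$ from Theorem \ref{2VDD-T1.1}. The organizing identity is (\ref{3VDD-full-E1.18}): along the flow $X(t,y)$ from Theorem \ref{2VDD-T1.2}, the quantity $\Phi_y(t):=\Lambda(\rho(X(t,y),t))$ satisfies
\begin{equation*}
\dot\Phi_y(t)=-\bigl[P(\Lambda^{-1}(\Phi_y(t)))-P(\tilde\rho)\bigr]-F(X(t,y),t),\quad \Phi_y(0)=\Lambda(\rho_0(y)),
\end{equation*}
and since $\Lambda'(\rho)=(2\mu+\lambda(\rho))/\rho$ is bounded above and below wherever $\rho$ lies in a compact subset of $(0,\bar\rho]$, the right-hand side is locally Lipschitz in $\Phi$ on the relevant range.

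For part (a), fix $t>0$ and let $y\to x_0$ through $E_+$. By Theorem \ref{2VDD-T1.2}(2), $X(t,\cdot)E_+$ is a one-sided neighborhood of $X(t,x_0)$ relative to $\mathcal{M}^t$, so existence of the plus-limit of $\rho(\cdot,t)$ at $X(t,x_0)$ reduces to convergence of $\Phi_y(t)$. By hypothesis $\Phi_y(0)\to\Lambda(\rho_0(x_0+))$, and the forcing $F(X(\cdot,y),\cdot)$ converges locally uniformly on $(0,\infty)$ to $F(X(\cdot,x_0),\cdot)$ by joint Hölder continuity of $F$ and continuity of $y\mapsto X(\cdot,y)$; Grönwall on compact time intervals then yields uniform convergence, defining $\rho(X(t,x_0)+,t)$ (the vacuum subcase $\rho_0(x_0+)=0$ is handled by a small upper-barrier argument combined with Theorem \ref{2VDD-T1.4}). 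The matching plus-limit for $\mathrm{div}u$ is read off $(2\mu+\lambda(\rho))\mathrm{div}u=F+P(\rho)-P(\tilde\rho)$. The $C^1((0,\infty))$ and local Hölder-in-$t$ claims follow by inspection of the ODE; the $C^{1/4}([0,\infty))$ regularity up to $t=0$ is obtained by interpolating the $L^1_t$ control of $F\circ X$ near $t=0$ against its spatial/temporal Hölder estimate for $t\ge\tau>0$, combined with the Hölder regularity of $X(\cdot,x_0)$ from Theorem \ref{2VDD-T1.2}.

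For part (b), applying (a) from both sides gives $\rho(X(t,x_0)\pm,t)$ and $\mathrm{div}u(X(t,x_0)\pm,t)$. Since $F$ is continuous on $\mathbb{R}^N\times(0,\infty)$ by (\ref{2VDD-E1.22}), one has $[F(X(t,x_0),t)]=0$; taking the jump of $F=(2\mu+\lambda(\rho))\mathrm{div}u-P(\rho)+P(\tilde\rho)$ at $X(t,x_0)$ gives (\ref{2VDD-E1.31}) at once. For part (c), subtracting the characteristic ODEs along the two sides and using cancellation of $F$ yields
\begin{equation*}
\frac{d}{dt}[\Lambda(\rho(X(t,x_0),t))]+[P(\rho(X(t,x_0),t))]=0.
\end{equation*}
Under $\rho_0(x_0\pm)\ge\underline\rho>0$, Theorem \ref{2VDD-T1.3} keeps both traces in a compact subset of $(0,\bar\rho]$, so $\Lambda,P\in C^1$ there and $a(t,x_0)$ is well-defined on the open set where $[\Lambda(\rho)]\ne0$; writing $[P(\rho)]=a(t,x_0)[\Lambda(\rho)]$ converts the equation into the linear ODE $\tfrac{d}{dt}[\Lambda]+a[\Lambda]=0$, whose integrating-factor solution is the claimed exponential. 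On the complementary set one has $[\rho_0(x_0)]=0$, which propagates by ODE uniqueness.

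The principal obstacle is in part (a): identifying the ODE limit $\Phi_\infty(t)$ with a genuine pointwise one-sided limit of $\rho(\cdot,t)$ at $X(t,x_0)$ relative to $\mathcal{M}^t$ (not merely a limit along transported characteristics), and threading the $L^1_t L^\infty_x$ control of $F$ near $t=0$ together with the stronger Hölder bound for $t\ge\tau>0$ to reach $C^{1/4}$ time regularity through $t=0$. Both points are resolved by combining the homeomorphism statements in Theorem \ref{2VDD-T1.2}(2)--(4) with the quantitative estimates (\ref{2VDD-E1.22}) and (\ref{2VDD-E1.15-1}), following the scheme of \cite{Hoff2008,Zhang2008}.
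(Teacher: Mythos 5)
Your proposal is correct and follows essentially the same route the paper intends: the paper itself omits the proof of Theorems \ref{2VDD-T1.2}--\ref{2VDD-T1.6}, referring to the characteristic-ODE scheme of \cite{Hoff2008,Zhang2008}, and your argument --- the ODE (\ref{3VDD-full-E1.18}) along the flow of Theorem \ref{2VDD-T1.2}, continuity of $F$ for $t>0$ to kill $[F]$ and yield the Rankine--Hugoniot relation, and the resulting linear ODE for $[\Lambda(\rho)]$ --- is exactly that adaptation, with the new viscosity-dependent ingredients (the H\"older bound (\ref{2VDD-E1.22}) and $\int_0^T\|F\|_{L^\infty}dt\leq C(T)$) correctly identified. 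The only point left implicit is the bookkeeping of the $\sigma$-weights in Proposition \ref{3VDD-full-P2.8} that produces the specific $C^{1/4}$ exponent at $t=0$, but this is consistent with the level of detail the paper itself supplies.
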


\begin{rem}
Using similar arguments as that in \cite{Zhang2008}, we also can
show that the condition of $\mu=$constant will induce
   a singularity of the system at vacuum in the following
   two aspects: 1) considering the special case where two fluid regions initially
separated by a vacuum region, the solution we obtained is a
nonphysical weak solution in which separate
   kinetic energies of the two fluids need not to be conserved; 2)
    smooth solutions for the spherically
symmetric system will blowup when the initial density is compactly
supported. Thus, the viscosity coefficient $\mu$ plays a key role in
the Navier-Stokes equations.
\end{rem}

We now briefly review some previous works about the Navier--Stokes
equations with density--dependent viscosity coefficients. For the
free boundary problem of one-dimensional or spherically symmetric
isentropic fluids,  there are many works, please see
\cite{jiang,Li2008,liu,yang3,fang00,zhang1}  and the references
cited therein. Under a special condition between $\mu$ and
$\lambda$, $\lambda=2\rho\mu'-2\mu$, there are some existence
results of global weak solutions for the system with the Korteweg
stress tensor or the additional quadratic friction term, see
\cite{bresch1,bresch2}. Also see Lions\cite{Lions} for
multidimensional isentropic fluids.

  We should mention that the methods
    introduced by    Hoff in
    \cite{Hoff1995} and Vaigant-Kazhikhov in \cite{Vaigant}
   will play a crucial role
in our proof here.

\section{Global existence}\label{3VDD-full-S2}
Standard local existence results now apply to show that there is a
smooth local solution $(\rho,u)$ to
(\ref{3VDD-full-E1.1})--(\ref{3VDD-full-E1.2}), defined up to a
positive time $T_0$, such that
    $$
    \rho,\ u\in C^{1}(\mathbb{R}^{N}\times[0,T_{0}] )\ \ \
\mbox{with}\ \ \ \rho>0\ \ \mbox{for all}\ t\in[0,T_{0}]
    $$
     and
$$(\rho-\tilde{\rho},u)\ \in C([0,T_{0}],H^{3})\cap
C^{1}([0,T_{0}],H^{2}).$$ (See for example
Matsumura-Nishida\cite{Marsumura80} and  Nash\cite{Nash62}.) Let
$[0,T^{*})$ be the maximal existence interval of the above solution
to (\ref{3VDD-full-E1.1})--(\ref{3VDD-full-E1.2}).

In this section, we derive some a priori estimates for the local
smooth solution of the system
(\ref{3VDD-full-E1.1})--(\ref{3VDD-full-E1.2}).

\textbf{Claim 1:} For any $T>0$, if  $(\rho,u)$  satisfies
        \begin{equation}
      0\leq  \rho\leq\bar{\rho}\label{3VDD-full-E2.1-1}
        \end{equation}
and
    \begin{equation}
      A_1+A_2\leq 2(C_0+C_f)^\theta,\ \forall\ t\in[0,T]\cap[0,T^*),\label{3VDD-full-E2.1-2}
    \end{equation}
where $\theta\in(0,1)$,
    $$
    A_1(T)=\sup_{t\in(0, T]\cap(0,T^*)}\sigma\int|\nabla
    u|^2dx+\int^{T\wedge T^*}_0\int\sigma\rho|\dot{u}|^2dxdt
    $$
and
    $$
    A_2(T)=\sup_{t\in(0, T]\cap(0,T^*)}\sigma^N
    \int\rho|\dot{u}|^2dx+\int^{T\wedge T^*}_0\int\sigma^N|\nabla\dot{u}|^2dxdt,
    \ T\wedge T^*=\min\{T,T^*\},
    $$
then we have
        \begin{equation}
      \frac{1}{2}\underline{\rho}_1< \rho<\bar{\rho}\ \
      A_1+A_2\leq (C_0+C_f)^\theta, \ \forall\ t\in[0,T]\cap[0,T^*).
    \end{equation}
 In this paper, we assume that $\varepsilon\leq 1$.

We can rewrite the momentum equation in the form,
  \begin{equation}
    \rho\dot{u}^j=\partial_j F+\mu \partial_k w^{j,k}+\rho
    f^j.\label{3VDD-full-E1.19}
    \end{equation}
 Stated differently, the
decomposition (\ref{3VDD-full-E1.19}) implies that
        \begin{equation}
          \Delta F=\mathrm{div}(\rho\dot{u}-\rho f).\label{3VDD-full-E1.20}
        \end{equation}
Similarly, we have
        \begin{equation}
    \mu\Delta w^{j,k}=\partial_k(\rho \dot{u}^j)
    -\partial_j(\rho\dot{u}^k)+\partial_j(\rho f^k)-\partial_k(\rho
    f^j).\label{3VDD-full-E2.24-10}
    \end{equation}
    Thus $L^2$ estimates for $\rho\dot{u}$, immediately imply $L^2$
hounds for $\nabla F$ and $\nabla w$.
 These three relations
(\ref{3VDD-full-E1.19})--(\ref{3VDD-full-E2.24-10}) will play the
important role in this section.

From now on, the constant $C$ (or $C(T)$) will be independent of
$\underline{\rho}_1$.

\begin{prop}\label{3VDD-full-P2.1}
  There is a positive constant $C=C(\bar{\rho})$ independent of $\underline{\rho}_1$, such that if
  $(\rho,u)$ is a smooth solution of (\ref{3VDD-full-E1.1})--(\ref{3VDD-full-E1.2})
  satisfying (\ref{3VDD-full-E2.1-1})--(\ref{3VDD-full-E2.1-2}), then
        \begin{equation}
          \sup_{t\in[0,T]\cap[0,T^*)}\int\left[
          \frac{1}{2}\rho|u|^2+G(\rho)
          \right]dx+\int^{T\wedge T^*}_0\int|\nabla u|^2dxdt\leq
          C(C_0+C_f).\label{3VDD-full-E2.1}
        \end{equation}
\end{prop}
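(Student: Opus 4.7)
The plan is to derive the classical energy identity by multiplying the momentum equation by $u$, integrating over $\mathbb{R}^N$, and exploiting the way the potential energy density $G$ is tailored to convert the pressure work into an exact time derivative. Specifically, differentiating $G(\rho)=\rho\int_{\tilde\rho}^\rho\frac{P(s)-P(\tilde\rho)}{s^2}\,ds$ gives the algebraic identity
\begin{equation*}
\rho G'(\rho)-G(\rho)=P(\rho)-P(\tilde\rho),
\end{equation*}
and combining this with $\rho_t+\mathrm{div}(\rho u)=0$ yields the pointwise balance
\begin{equation*}
\partial_tG(\rho)+\mathrm{div}(G(\rho)u)+(P(\rho)-P(\tilde\rho))\,\mathrm{div}\,u=0.
\end{equation*}
Note that the sign condition $(\rho-\tilde\rho)(P(\rho)-P(\tilde\rho))>0$ on $[0,\bar\rho]$ guarantees $G\geq0$, so $E(t):=\int[\tfrac12\rho|u|^2+G(\rho)]\,dx$ is a genuine energy.

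First I would rewrite $\int[(\rho u)_t+\mathrm{div}(\rho u\otimes u)]\cdot u\,dx$ as $\frac{d}{dt}\int\frac12\rho|u|^2\,dx$ using the continuity equation, and integrate the viscous terms by parts to produce $-\mu\int|\nabla u|^2\,dx-\int(\mu+\lambda(\rho))|\mathrm{div}\,u|^2\,dx$; both dissipation terms are non-positive since $\mu>0$ and $\mu+\lambda(\rho)>0$ under (\ref{3VDD-full-E1.5}). Integrating the identity above in $x$ converts the pressure contribution to $\frac{d}{dt}\int G(\rho)\,dx$, producing
\begin{equation*}
\frac{d}{dt}E(t)+\mu\int|\nabla u|^2\,dx+\int(\mu+\lambda(\rho))|\mathrm{div}\,u|^2\,dx=\int\rho f\cdot u\,dx.
\end{equation*}

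To control the forcing, I would use $\rho\leq\bar\rho$ together with Cauchy--Schwarz:
\begin{equation*}
\left|\int\rho f\cdot u\,dx\right|\leq \bar\rho^{1/2}\|f\|_{L^2}\Bigl(\int\rho|u|^2\,dx\Bigr)^{1/2}\leq C(\bar\rho)\,\|f\|_{L^2}\,\sqrt{E(t)}.
\end{equation*}
Dropping the dissipation and setting $y=\sqrt{E}$ turns the balance into $y'\leq C\|f\|_{L^2}$, which integrates to $\sqrt{E(t)}\leq\sqrt{C_0}+C\int_0^t\|f\|_{L^2}\,ds\leq\sqrt{C_0}+CC_f$. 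Reinserting this bound on $E$ back into the energy identity and integrating in time gives the dissipation bound $\int_0^{T\wedge T^*}\int|\nabla u|^2\,dx\,dt\leq C(C_0+C_f)$.

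The proof is essentially routine and does not require the running ansatz (\ref{3VDD-full-E2.1-2}) on $A_1+A_2$; the only nuisance is a bookkeeping issue, namely that the naive estimate produces $C_0+C_f^2$ rather than $C_0+C_f$. This is absorbed using the standing assumption $\varepsilon\leq 1$ so that $C_f^2\leq C_f$. I expect no deeper obstacle at this level, the subtlety of the section lying in the higher-order estimates for $A_1$ and $A_2$ to follow.
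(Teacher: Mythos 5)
Your proposal is correct and is exactly the standard energy estimate that the paper invokes without detail (its proof reads only ``Using the energy estimate, we can easily obtain (\ref{3VDD-full-E2.1})''). The identity $\rho G'(\rho)-G(\rho)=P(\rho)-P(\tilde\rho)$, the sign of the dissipation terms under (\ref{3VDD-full-E1.5}), and the absorption of $C_f^2$ via $\varepsilon\leq1$ are all handled properly, and you are right that only the pointwise bound $0\leq\rho\leq\bar\rho$, not the ansatz on $A_1+A_2$, is needed here.
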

\begin{proof}
  Using the energy estimate, we can easily obtain
  (\ref{3VDD-full-E2.1}), and omit the details.
\end{proof}

The following lemma contains preliminary versions of $L^2$ bounds
for $\nabla u$ and $\rho\dot{u}$.

\begin{lem}\label{3VDD-full-L2.1}
  If $(\rho,u)$
  is a smooth solution of (\ref{3VDD-full-E1.1})--(\ref{3VDD-full-E1.2}) as
  in Proposition \ref{3VDD-full-P2.1}, then there is a constant $C=C(\bar{\rho})$  independent of $\underline{\rho}_1$, such that
    \begin{equation}
      \sup_{t\in(0,T]\cap(0,T^*)}\sigma\int|\nabla
      u|^2dx+\int^{T\wedge T^*}_0\int\sigma\rho|\dot{u}|^2dxdt
            \leq C\left(C_0+C_f+O_1
            \right),\label{3VDD-full-E2.4}
    \end{equation}
where $O_1=\int^{T\wedge T^*}_0\int\sigma|\nabla u|^3dxdt$, and
    \begin{eqnarray}
      &&\sup_{t\in(0,T]\cap(0,T^*)}\sigma^N\int\rho|\dot{u}|^2dx
      +\int^{T\wedge T^*}_0\int\sigma^N\left(|\nabla \dot{u}|^2
      +\left|\frac{D}{Dt}\mathrm{div}u
      \right|^2\right)dxdt\nonumber\\
            &\leq& C\left(C_0+C_f+A_1(T)
            \right)+C\int^{T\wedge T^*}_0\int\sigma^N(|u|^4+|\nabla
            u|^4)dxdt.\label{3VDD-full-E2.5}
    \end{eqnarray}
\end{lem}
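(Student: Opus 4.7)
Both bounds follow by Hoff-type material-derivative energy estimates: the momentum equation is multiplied by a $\sigma$-weighted copy of $\dot u$ for (\ref{3VDD-full-E2.4}), and, after first applying the convective operator $\partial_t+\mathrm{div}(\,\cdot\,u)$ to the whole equation, by a $\sigma^N$-weighted copy of $\dot u$ for (\ref{3VDD-full-E2.5}). The algebraic backbone is the mass-equation identity $(\rho w)_t+\mathrm{div}(\rho u\,w)=\rho\dot w$ together with $\mathrm{div}\dot u=\frac{D}{Dt}\mathrm{div}u+\partial_iu^k\partial_ku^i$, which make $\dot u$ the natural test function and give the correct sign structure on the left.

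For (\ref{3VDD-full-E2.4}), multiplying (\ref{3VDD-full-E1.1})$_2$ by $\sigma\dot u$ produces $\int\sigma\rho|\dot u|^2dx$ on the left. On the right, writing $\partial_i\dot u^j=\partial_t\partial_iu^j+u^k\partial_k\partial_iu^j+\partial_iu^k\partial_ku^j$ turns the viscous integrals into the boundary quantity $\sigma\int[\frac{\mu}{2}|\nabla u|^2+\frac{\mu+\lambda}{2}(\mathrm{div}u)^2]dx$, a lower-order piece $\sigma'\int|\nabla u|^2dx$ (absorbed by Proposition \ref{3VDD-full-P2.1} since $|\sigma'|\le1$), and a cubic remainder which is precisely $O_1$. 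The pressure contribution is $\int P\mathrm{div}\dot udx$; after using the mass equation to rewrite $P_t$ and $\lambda_t$ as multiples of $\mathrm{div}u$, it produces $G(\rho)$-type boundary contributions (bounded by $C_0$ via Proposition \ref{3VDD-full-P2.1}) and further pieces of type $|\nabla u|^3$ merged into $O_1$. The external force is split by Cauchy--Schwarz into an absorbable $\int\sigma\rho|\dot u|^2/2$ and a term bounded by $C_f$.

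For (\ref{3VDD-full-E2.5}), applying $\partial_t+\mathrm{div}(\,\cdot\,u)$ to the momentum equation and testing against $\sigma^N\dot u$ gives $\frac{d}{dt}\frac{\sigma^N}{2}\int\rho|\dot u|^2dx$ on the left, together with a weight-derivative error $\frac{N}{2}\sigma^{N-1}\sigma'\int\rho|\dot u|^2dx$ absorbed through (\ref{3VDD-full-E2.4}) and $A_1$. Integration by parts against the material derivative of $\mu\Delta u$ produces the dissipation $\mu\int\sigma^N|\nabla\dot u|^2dx$ plus commutator residuals of the schematic form $\sigma^N|\nabla u|^2|\nabla\dot u|$, controlled by Young's inequality at the cost of the $\int\sigma^N|\nabla u|^4dx$ term on the right. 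The $\nabla((\mu+\lambda)\mathrm{div}u)$ contribution similarly yields $(\mu+\lambda)\int\sigma^N|\frac{D}{Dt}\mathrm{div}u|^2dx$ plus lower-order remainders of the same type. The pressure and force terms are handled as in the first estimate, using $\dot P=-P'(\rho)\rho\mathrm{div}u$ and the $f_t$, $\nabla f$ norms contained in $C_f$; the convective derivative of $f$ is what generates the $\int\sigma^N|u|^4dx$ contribution on the right.

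The main obstacle is the bookkeeping around the density-dependent coefficient $\lambda(\rho)$: the operator $\partial_t+\mathrm{div}(\,\cdot\,u)$ does not commute with multiplication by $(\mu+\lambda)$, so one must rewrite
$\partial_t[(\mu+\lambda)\mathrm{div}u]+\mathrm{div}[(\mu+\lambda)\mathrm{div}u\,u]$
as $(\mu+\lambda)\frac{D}{Dt}\mathrm{div}u+(\mu+\lambda)(\mathrm{div}u)^2+\lambda'(\rho)\dot\rho\,\mathrm{div}u$ and use $\dot\rho=-\rho\mathrm{div}u$ to recast the last piece as quadratic in $\nabla u$. This algebraic care is what allows the dissipation $(\mu+\lambda)\int\sigma^N|\frac{D}{Dt}\mathrm{div}u|^2dx$ to appear with the correct sign in (\ref{3VDD-full-E2.5}), with the remaining residuals fitting into the $|\nabla u|^4+|u|^4$ budget on the right-hand side. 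Throughout, the density bound (\ref{3VDD-full-E2.1-1}) ensures that the coefficients $P$, $\lambda(\rho)$, $\lambda'(\rho)$ are uniformly bounded, and (\ref{3VDD-full-E2.1-2}) is used only to control the weight-derivative error.
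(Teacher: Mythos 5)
Your proposal is correct and follows essentially the same route as the paper, which itself omits the details and refers to the standard Hoff-type material-derivative energy estimates (Hoff 1995, Lemma 2.1, and Zhang--Fang 2008, Lemma 2.1): test with $\sigma\dot u$ for (\ref{3VDD-full-E2.4}), apply $\partial_t+\mathrm{div}(\,\cdot\,u)$ and test with $\sigma^N\dot u$ for (\ref{3VDD-full-E2.5}). You also correctly identify the one genuinely new point relative to the constant-$\lambda$ case, namely rewriting the commutator of the convective operator with $(\mu+\lambda(\rho))$ via $\dot\rho=-\rho\,\mathrm{div}u$ so that the residuals land in the $|\nabla u|^4$ budget.
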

\begin{proof}
In \cite{Zhang2008} (Lemma 2.1), we obtain this lemma in
$\mathbb{R}^2$. Using the similar argument as that in
\cite{Zhang2008} (Lemma 2.1) and \cite{Hoff1995} (Lemma 2.1), we can
easily obtain
 this lemma in
$\mathbb{R}^3$ and omit the details.
\end{proof}

The following  lemmas will be applied to bound the higher order
terms occurring on the right hand sides of
(\ref{3VDD-full-E2.4})--(\ref{3VDD-full-E2.5}).

\begin{lem}\label{3VDD-full-L2.2}
  If $(\rho,u)$
  is a smooth solution of (\ref{3VDD-full-E1.1})--(\ref{3VDD-full-E1.2}) as
  in Proposition \ref{3VDD-full-P2.1}, then there is a constant $C=C(\bar{\rho})$  independent of $\underline{\rho}_1$, such that,
     \begin{equation}
      \|u\|_{L^p}\leq C_p\|u\|_{L^2}^\frac{2N-Np+2p}{2p}\|\nabla
      u\|_{L^2}^{\frac{Np-2N}{2p}},\ p\in\left\{\begin{array}{ll}
        [2,\infty),&N=2,\cr
            [2,6], &N=3.
      \end{array}
      \right.\label{3VDD-full-E2.16}
    \end{equation}
    \begin{equation}
      \|u\|_{L^p}^p\leq
    C_p(C_0+C_f)^\frac{2N-Np+2p}{4}\|\nabla
      u\|_{L^2}^{\frac{Np-2N}{2}}+C_p(C_0+C_f)^{\frac{2N-Np+2p}{2N}}\|\nabla
      u\|_{L^2}^p,\ p\in\left\{\begin{array}{ll}
        [2,\infty),&N=2,\cr
            [2,6], &N=3,
      \end{array}
      \right.\label{3VDD-full-E2.16-0}
    \end{equation}
        \begin{equation}
          \|\nabla u\|_{L^p}\leq C_p(\|F\|_{L^p}
          +\|w\|_{L^p}+\|P-P(\tilde{\rho})\|_{L^p}
          ),\ p\in(1,\infty),\label{3VDD-full-E2.16-1}
        \end{equation}
      \begin{equation}
        \|\nabla F\|_{L^p}+    \|\nabla w\|_{L^p}\leq C_p(\|\rho\dot{u}\|_{L^p}
    +\|f\|_{L^p}),\ p\in(1,\infty),\label{3VDD-full-E2.16-2}
    \end{equation}
         \begin{eqnarray}
        \|F\|_{L^p}+    \| w\|_{L^p}&\leq& C_p\big(\|\rho\dot{u}\|_{L^2}^{\frac{Np-2N}{2p}}
        (\|\nabla
        u\|_{L^2}^{\frac{2N-Np+2p}{2p}}+\|P-P(\tilde{\rho})\|_{L^2}^{\frac{2N-Np+2p}{2p}})\nonumber\\
                &&
    +\|\nabla u\|_{L^2}+\|f\|_{L^2}+\|P-P(\tilde{\rho})\|_{L^2}\big),\ p\in\left\{\begin{array}{ll}
        [2,\infty),&N=2,\cr
            [2,6], &N=3.
      \end{array}
      \right.\label{3VDD-full-E2.16-3}
    \end{eqnarray}
Also, for $0\leq t_1\leq t_2\leq T$, $p\geq2$ and $s\geq0$,
    \begin{eqnarray}
      &&\left.\int\sigma^s|\rho-\tilde{\rho}|^pdx\right|^{t_2}_{t_1}
      +C^{-1}\int^{t_2}_{t_1}\int\sigma^s|\rho-\tilde{\rho}|^pdxds\nonumber\\
        &\leq& C\left(
        \int^{\sigma(t_2)\vee
        t_1}_{t_1}\int\sigma^{s-1}|\rho-\tilde{\rho}|^pdxds
        +\int^{t_2}_{t_1}\int\sigma^s|F|^pdxds
        \right),\label{3VDD-full-E2.25}
    \end{eqnarray}
    \begin{equation}
      \int^{t_2}_{t_1}\int\sigma^s|\rho-\tilde{\rho}|^pdxds
      \leq C\left(\int^{t_2}_{t_1}\int\sigma^s|F|^pdxds
      +C_0+C_f
      \right).\label{3VDD-full-E2.16-4}
    \end{equation}
\end{lem}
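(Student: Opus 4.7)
The plan is to prove the seven inequalities in sequence, combining Gagliardo--Nirenberg interpolation on $\mathbb{R}^N$, Calder\'on--Zygmund estimates on the elliptic identities (\ref{3VDD-full-E2.24}), (\ref{3VDD-full-E1.20}), (\ref{3VDD-full-E2.24-10}), and a weighted energy argument on the continuity equation.

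First, (\ref{3VDD-full-E2.16}) is the standard Gagliardo--Nirenberg inequality for the stated admissible range of $p$. To obtain (\ref{3VDD-full-E2.16-0}), I control $\|u\|_{L^2}$ by the energy. Split $\mathbb{R}^N=\{\rho\geq\tilde\rho/2\}\cup\{\rho<\tilde\rho/2\}$: on the first set $|u|^2\leq(2/\tilde\rho)\rho|u|^2$, so Proposition \ref{3VDD-full-P2.1} yields $\|u\|_{L^2(\{\rho\geq\tilde\rho/2\})}^2\leq C(C_0+C_f)$; on the second, the identity $(\rho-\tilde\rho)^2\leq CG(\rho)$ (from the remark following (\ref{3VDD-full-E1.6}) applied to $g=(\rho-\tilde\rho)^2$) combined with Chebyshev gives $|\{\rho<\tilde\rho/2\}|\leq C(C_0+C_f)$; combining with H\"older and Sobolev/Gagliardo--Nirenberg ($H^1\hookrightarrow L^6$ when $N=3$, an absorption via $L^q$ for any finite $q$ when $N=2$) produces
    $$
    \|u\|_{L^2}^2\leq C(C_0+C_f)+C(C_0+C_f)^{2/N}\|\nabla u\|_{L^2}^2.
    $$
Inserting this into (\ref{3VDD-full-E2.16}) and using $(a+b)^s\leq C(a^s+b^s)$ delivers (\ref{3VDD-full-E2.16-0}).

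For the elliptic estimates, (\ref{3VDD-full-E2.24}) expresses $\Delta u^j$ as a gradient plus a divergence, so the Riesz-transform bound on $\mathbb{R}^N$ (together with the upper and lower bounds on $\lambda+2\mu$) yields (\ref{3VDD-full-E2.16-1}). The same argument applied to $\Delta F=\mathrm{div}(\rho\dot u-\rho f)$ and (\ref{3VDD-full-E2.24-10}), plus $\rho\leq\bar\rho$, gives (\ref{3VDD-full-E2.16-2}). For (\ref{3VDD-full-E2.16-3}), apply Gagliardo--Nirenberg to $F$ and $w$ to obtain
    $$
    \|F\|_{L^p}+\|w\|_{L^p}\leq C_p\bigl(\|F\|_{L^2}+\|w\|_{L^2}\bigr)^{\frac{2N-Np+2p}{2p}}\bigl(\|\nabla F\|_{L^2}+\|\nabla w\|_{L^2}\bigr)^{\frac{Np-2N}{2p}},
    $$
insert $\|F\|_{L^2}+\|w\|_{L^2}\leq C(\|\nabla u\|_{L^2}+\|P-P(\tilde\rho)\|_{L^2})$ from the definitions of $F$ and $w$ and $\|\nabla F\|_{L^2}+\|\nabla w\|_{L^2}\leq C(\|\rho\dot u\|_{L^2}+\|f\|_{L^2})$ from (\ref{3VDD-full-E2.16-2}), then split off the $\|f\|_{L^2}$ contribution by Young's inequality ($a^\alpha b^{1-\alpha}\leq a+b$) to produce the additive lower-order terms.

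For (\ref{3VDD-full-E2.25}), multiply the continuity equation by $p|\rho-\tilde\rho|^{p-2}(\rho-\tilde\rho)$ and integrate in $x$; after integration by parts the convective terms combine to
    $$
    \frac{d}{dt}\int|\rho-\tilde\rho|^p\,dx+\int\mathrm{div}\,u\,(\rho-\tilde\rho)\bigl[(p-1)\rho+\tilde\rho\bigr]|\rho-\tilde\rho|^{p-2}\,dx=0.
    $$
Using $(\lambda+2\mu)\,\mathrm{div}\,u=F+P(\rho)-P(\tilde\rho)$, the bound $(p-1)\rho+\tilde\rho\geq\tilde\rho>0$, and $(P(\rho)-P(\tilde\rho))(\rho-\tilde\rho)\geq C^{-1}(\rho-\tilde\rho)^2$ on $[0,\bar\rho]$ (which holds since the continuous ratio extends to $P'(\tilde\rho)>0$ at $\tilde\rho$ and is positive elsewhere by (\ref{3VDD-full-E1.5})), one obtains a coercive term $C^{-1}\int|\rho-\tilde\rho|^p\,dx$; the $F$-contribution is $\leq C|F||\rho-\tilde\rho|^{p-1}$ and is absorbed by Young's. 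Weighting by $\sigma^s$ and integrating over $[t_1,t_2]$, the derivative $s\sigma^{s-1}\sigma'$ (supported in $\{t<1\}$) produces exactly the $\int^{\sigma(t_2)\vee t_1}_{t_1}$-integral, yielding (\ref{3VDD-full-E2.25}); (\ref{3VDD-full-E2.16-4}) follows immediately upon bounding $|\rho-\tilde\rho|^p\leq C(\rho-\tilde\rho)^2\leq CG(\rho)$ and applying Proposition \ref{3VDD-full-P2.1}. The main obstacle is precisely this coercivity: since all constants must be independent of $\underline\rho_1$, the density may vanish and the naive lower bound on $\rho\,(P(\rho)-P(\tilde\rho))(\rho-\tilde\rho)|\rho-\tilde\rho|^{p-2}$ fails, so it is essential to combine the $-\int\mathrm{div}\,u\,|\rho-\tilde\rho|^p$ term (from integration by parts) with $p\int\rho\,\mathrm{div}\,u\,|\rho-\tilde\rho|^{p-2}(\rho-\tilde\rho)$ into the single factor $(p-1)\rho+\tilde\rho\geq\tilde\rho$, which is uniformly bounded below even in the vacuum.
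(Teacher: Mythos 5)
Your proposal is correct and follows essentially the same route the paper takes (the paper omits the details, deferring to Lemma 2.3 of Hoff 1995 and Lemma 2.2 of Zhang--Fang 2008, which use exactly these ingredients: Gagliardo--Nirenberg interpolation, the smallness of $\|u\|_{L^2}$ and of $|\{\rho<\tilde\rho/2\}|$ obtained by splitting according to the density and using the energy bound, Calder\'on--Zygmund estimates on the effective-flux identities, and the $|\rho-\tilde\rho|^{p-2}(\rho-\tilde\rho)$ multiplier argument on the mass equation with the coercivity coming from $(\rho-\tilde\rho)(P(\rho)-P(\tilde\rho))\geq C^{-1}(\rho-\tilde\rho)^2$). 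The algebraic combination $-|\rho-\tilde\rho|^p+p\rho|\rho-\tilde\rho|^{p-2}(\rho-\tilde\rho)=|\rho-\tilde\rho|^{p-2}(\rho-\tilde\rho)[(p-1)\rho+\tilde\rho]$, which you correctly identify as the key to uniformity in $\underline{\rho}_1$ near vacuum, checks out, as do all the stated exponents.
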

\begin{proof}
Using the similar argument as that in \cite{Zhang2008} (Lemma 2.2)
and \cite{Hoff1995} (Lemma 2.3), we can easily obtain
 this lemma  and omit the details.
\end{proof}

To bound the higher order term $\sigma|\nabla u|^3$ occurring on the
right hand sides of (\ref{3VDD-full-E2.4}) in $\mathbb{R}^3$, we
need to obtain the estimate of $\|u\|_{H^1_x}$ near $t=0$.

\begin{lem}\label{3VDD-full-L2.3}
 If $N=3$, $u_0\in H^1$, $(\rho,u)$
  is a smooth solution of (\ref{3VDD-full-E1.1})--(\ref{3VDD-full-E1.2}) as
  in Proposition \ref{3VDD-full-P2.1}, then there is a positive constant
  $T_1$  independent of $\underline{\rho}_1$, such that
    \begin{equation}
      \sup_{t\in[0,T_1\wedge T]\cap[0,T^*)}\int_{\mathbb{R}^3}
      |\nabla
      u|^2dx+\int^{T_1\wedge T\wedge T^*}_0\int_{\mathbb{R}^3}\rho|\dot{u}|^2dxdt
        \leq C(1+M_q).\label{3VDD-full-E2.29}
    \end{equation}
\end{lem}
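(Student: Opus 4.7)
The plan is to derive an \emph{unweighted} version of the energy estimate (\ref{3VDD-full-E2.4}) on a short initial interval $[0,T_1]$, exploiting the additional hypothesis $u_0\in H^1$ (which gives $\|\nabla u_0\|_{L^2}^2\leq M_q$) to control the initial value. First, I would test the momentum equation, written in the form (\ref{3VDD-full-E1.19}), against $\dot u^j=u_t^j+u^k\partial_k u^j$. After integration by parts, with the time derivatives of $\lambda(\rho)$ and $P(\rho)$ rewritten through the continuity equation, this produces an identity of the type
$$
\frac{d}{dt}\int\!\!\left(\frac{\mu}{2}|\nabla u|^2+\frac{\mu+\lambda(\rho)}{2}(\mathrm{div}\,u)^2-(P-P(\tilde\rho))\mathrm{div}\,u\right)dx+\int\rho|\dot u|^2\,dx=\mathcal{R}(t),
$$
where $\mathcal{R}(t)$ collects the pressure-related lower-order contributions together with a cubic-in-$\nabla u$ commutator arising from $u\cdot\nabla u$.

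The principal new difficulty in three dimensions, and the main step, is the cubic term inside $\mathcal{R}$. I would control it using Gagliardo--Nirenberg together with (\ref{3VDD-full-E2.16-1})--(\ref{3VDD-full-E2.16-2}) and the Sobolev embedding $H^1(\mathbb{R}^3)\hookrightarrow L^6$, estimating
$$
\int|\nabla u|^3\,dx\leq C\|\nabla u\|_{L^2}^{3/2}\|\nabla u\|_{L^6}^{3/2}\leq C\|\nabla u\|_{L^2}^{3/2}\bigl(\|\rho\dot u\|_{L^2}+\|f\|_{L^2}+1\bigr)^{3/2},
$$
where the bound $\|P-P(\tilde\rho)\|_{L^6}\leq C$ (needed to handle the $P$-contribution to $\|\nabla u\|_{L^6}$ inside (\ref{3VDD-full-E2.16-1})) follows by interpolating the $L^2$ bound of Proposition \ref{3VDD-full-P2.1} against the uniform pointwise bound $\rho\leq\bar\rho$. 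Young's inequality then lets me absorb the $\|\rho\dot u\|_{L^2}^2$ piece into the left-hand side, leaving a remainder of size $C(1+\|\nabla u\|_{L^2}^6+\|f\|_{L^2}^2)$. The remaining pressure terms in $\mathcal{R}$ are controlled via $0\leq\rho\leq\bar\rho$ and the continuity equation, giving $C(1+\|\nabla u\|_{L^2}^2)$, and the forcing term $\int\rho f\cdot\dot u\,dx$ is split by Young as $\tfrac14\|\rho^{1/2}\dot u\|_{L^2}^2+C\|f\|_{L^2}^2$.

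Altogether this produces a differential inequality of the shape
$$
y'(t)+\tfrac12\int\rho|\dot u|^2\,dx\leq C\bigl(1+y(t)^3+\|f(\cdot,t)\|_{L^2}^2\bigr),\qquad y(t):=\int\!\left(\tfrac{\mu}{2}|\nabla u|^2+\tfrac{\mu+\lambda(\rho)}{2}(\mathrm{div}\,u)^2\right)dx+1.
$$
Since $y(0)\leq C(1+\|\nabla u_0\|_{L^2}^2)\leq C(1+M_q)$, comparison with the ODE $z'=C(1+z^3+g(t))$, where $\int_0^\infty g\,dt\leq C_f$, gives $y(t)\leq 2y(0)$ on an interval $[0,T_1]$ whose length depends only on $M$, $\bar\rho$, the problem parameters $\mu,\lambda,P$, and $C_f$, and is therefore independent of $\underline{\rho}_1$. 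Integrating the inequality over $[0,T_1\wedge T\wedge T^*]$ and using $\int_0^{T_1}\|f\|_{L^2}^2\,dt\leq C_f\leq 1$ then yields (\ref{3VDD-full-E2.29}).
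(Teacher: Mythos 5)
Your proposal is correct and follows essentially the same route as the paper: an unweighted version of the energy estimate (\ref{3VDD-full-E2.4}) with initial datum controlled by $\|\nabla u_0\|_{L^2}^2\leq M_q$, the cubic term reduced through $F$ and $w$ via (\ref{3VDD-full-E2.16-1})--(\ref{3VDD-full-E2.16-2}) and Gagliardo--Nirenberg so that the $\|\rho\dot u\|_{L^2}$ factor is absorbed by Young, leaving a $\|\nabla u\|_{L^2}^6$ remainder, and a short-time bootstrap fixing $T_1$ in terms of $M_q$ only. The paper interpolates $\|F\|_{L^3}^3\leq C\|F\|_{L^2}^{3/2}\|\nabla F\|_{L^2}^{3/2}$ directly while you go through $\|\nabla u\|_{L^2}^{3/2}\|\nabla u\|_{L^6}^{3/2}$, but this is the same computation.
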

\begin{proof}
  Using a similar argument as that in the proof of
  (\ref{3VDD-full-E2.4}), we have
    $$
      \int_{\mathbb{R}^3}|\nabla
      u|^2dx+\int^{t}_0\int_{\mathbb{R}^3}\rho|\dot{u}|^2dxdt
            \leq C(C_0+C_f+M_q)+C\int^{t}_0\int_{\mathbb{R}^3}|\nabla u|^3dxds.
   $$
  From
(\ref{3VDD-full-E2.16-1}) and (\ref{3VDD-full-E2.16-4}), we have
    $$
\int^{t}_0\int_{\mathbb{R}^3}|\nabla u|^3dxds
            \leq C+C\int^{t}_0\int_{\mathbb{R}^3}(|F|^3+|w|^3)dxds.
    $$
  From
(\ref{3VDD-full-E2.16-2})--(\ref{3VDD-full-E2.16-3}) and
(\ref{3VDD-full-E2.16}), we obtain
    \begin{eqnarray}
        && \int_{\mathbb{R}^3}\left(|F|^3+|w|^3\right)dx\nonumber\\
                &\leq&
      C\left(\int_{\mathbb{R}^3} |F|^2dx
      \right)^\frac{3}{4}\left(\int_{\mathbb{R}^3} |\nabla F|^2dx
      \right)^{\frac{3}{4}}+C\left(\int_{\mathbb{R}^3} |w|^2dx
      \right)^\frac{3}{4}\left(\int_{\mathbb{R}^3} |\nabla w|^2dx
      \right)^{\frac{3}{4}}\nonumber\\
            &\leq&C\left(\int_{\mathbb{R}^3}(|\nabla u|^2+|\rho-\tilde{\rho}|^2)dx\right)^\frac{3}{4}
            \left(\int_{\mathbb{R}^3}\left(\rho|\dot{u}|^2+|f|^2
            \right)dx\right)^\frac{3}{4}.
    \end{eqnarray}
Thus, from Proposition \ref{3VDD-full-P2.1}, we have
     \begin{eqnarray*}
    &&\int_{\mathbb{R}^3}|\nabla
      u|^2dx+\int^{t}_0\int_{\mathbb{R}^3}\rho|\dot{u}|^2dxdt\nonumber\\
            &\leq& C(1+M_q)+Ct\sup_{s\in[0,t]}
            \|\nabla u(\cdot,s)\|_{L^2}^6,
            \ t\in[0,1].
     \end{eqnarray*}
Thus, when $T_1=\min\{ \frac{1}{8C^3(1+M_q)^{2}},1\}$, we can easily
obtain (\ref{3VDD-full-E2.29}).
\end{proof}

Now, we apply the estimates of Lemma \ref{3VDD-full-L2.2} to close
the bounds in  Lemma \ref{3VDD-full-L2.1}.

\begin{prop}\label{3VDD-full-P2.2}
If $(\rho,u)$
  is a smooth solution of (\ref{3VDD-full-E1.1})--(\ref{3VDD-full-E1.2}) as
  in Proposition \ref{3VDD-full-P2.1}, and $\varepsilon$ is small enough, then we have
    \begin{equation}
      \sup_{t\in(0,T]\cap(0,T^*)}\int\left(\sigma
      |\nabla
      u|^2+\sigma^N\rho|\dot{u}|^2\right)dx
            +\int^{T\wedge T^*}_0\int\left(\sigma\rho|\dot{u}|^2+
            \sigma^N|\nabla\dot{u}|^2
            \right)dxdt
        \leq(C_0+C_f)^\theta.\label{3VDD-full-E2.22}
    \end{equation}
\end{prop}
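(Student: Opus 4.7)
The plan is to close the bootstrap: assuming $A_1+A_2\le 2(C_0+C_f)^\theta$, show $A_1+A_2\le (C_0+C_f)^\theta$ by carefully bounding the cubic and quartic error terms on the right-hand sides of Lemma \ref{3VDD-full-L2.1}. From (\ref{3VDD-full-E2.4}) we need to control $O_1=\int_0^{T\wedge T^*}\!\!\int\sigma|\nabla u|^3\,dxdt$, and from (\ref{3VDD-full-E2.5}) we need $\int_0^{T\wedge T^*}\!\!\int\sigma^N(|u|^4+|\nabla u|^4)\,dxdt$. The target is to bound each of these by $C(C_0+C_f)^{\theta+\delta}$ for some $\delta>0$, then absorb using smallness of $\varepsilon$.

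\textbf{Step 1: decomposition via $F$ and $w$.} Using (\ref{3VDD-full-E2.16-1}) with $p=3,4$, it suffices to estimate $\int\sigma|F|^3,\int\sigma|w|^3$ and $\int\sigma^N|F|^4,\int\sigma^N|w|^4$, plus the pressure contributions which are handled by (\ref{3VDD-full-E2.16-4}) (the pressure terms are reduced again to the $F$-integrals, up to an $O(C_0+C_f)$ remainder). Then (\ref{3VDD-full-E2.16-3}) interpolates $\|F\|_{L^p}+\|w\|_{L^p}$ between an $\|\rho\dot u\|_{L^2}$ factor (power $(Np-2N)/(2p)$) and a factor involving $\|\nabla u\|_{L^2}+\|P-P(\tilde\rho)\|_{L^2}+\|f\|_{L^2}$, while (\ref{3VDD-full-E2.16-2}) gives $\|\nabla F\|_{L^2}+\|\nabla w\|_{L^2}\le C(\|\rho\dot u\|_{L^2}+\|f\|_{L^2})$.

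\textbf{Step 2: exploit the bootstrap weights.} The hypothesis yields $\sup_t\sigma\|\nabla u\|_{L^2}^2\le 2(C_0+C_f)^\theta$, $\int\sigma\rho|\dot u|^2\le 2(C_0+C_f)^\theta$, $\sup_t\sigma^N\int\rho|\dot u|^2\le 2(C_0+C_f)^\theta$, and $\int\sigma^N|\nabla\dot u|^2\le 2(C_0+C_f)^\theta$; Proposition \ref{3VDD-full-P2.1} gives $\sup\|P-P(\tilde\rho)\|_{L^2}^2+\int\|\nabla u\|_{L^2}^2\,dt\le C(C_0+C_f)$. Combining these with Step 1, after Hölder in $t$ and choosing the weight distribution so the $\sigma$-powers match, each of the cubic/quartic integrals splits as $(\sup\sigma^a\,\mathcal E_1)^{\beta}\cdot(\int\sigma^b\mathcal E_2\,dt)^{1-\beta}$ with $\mathcal E_i$ controlled by the bootstrap or the energy estimate. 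The outcome is bounds of the form $C(C_0+C_f)^\theta\cdot(C_0+C_f)^\delta$ with $\delta>0$ strictly positive once $\theta$ is small enough; the $|u|^4$ term is handled directly by (\ref{3VDD-full-E2.16-0}) with $p=4$ and the energy bound.

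\textbf{Step 3: the $N=3$ singularity at $t=0$.} The weight $\sigma$ in $O_1$ is too weak near $t=0$ in three dimensions because the interpolation produces a factor that scales worse than $\sigma^{-1}$. Here we split $[0,T\wedge T^*)$ into $[0,T_1]$ and $[T_1,T\wedge T^*)$, using Lemma \ref{3VDD-full-L2.3} to replace the weighted bound on $[0,T_1]$ by an unweighted uniform $H^1$-bound in terms of $M_q$, and applying the bootstrap only on $[T_1,T\wedge T^*)$ where $\sigma$ is bounded below. This is the main obstacle: keeping careful track of the $\sigma$-exponents and making sure the $N=3$ Sobolev embeddings (e.g.\ $\|F\|_{L^6}\le C\|\nabla F\|_{L^2}$, $\|\nabla u\|_{L^3}\le C\|\nabla u\|_{L^2}^{1/2}\|\nabla u\|_{L^6}^{1/2}$) are used efficiently while keeping the bound of the form $C(M)(C_0+C_f)^{\theta+\delta}$.

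\textbf{Step 4: closure.} Substituting the Step 2–3 estimates into (\ref{3VDD-full-E2.4})–(\ref{3VDD-full-E2.5}) gives $A_1+A_2\le C(M)(C_0+C_f)+C(M)(C_0+C_f)^{\theta+\delta}$, and choosing $\theta\in(0,1)$ with $\theta+\delta>1$ replaced by $\delta>0$ and $\varepsilon$ small so that $C(M)\varepsilon^\delta\le 1/2$ yields (\ref{3VDD-full-E2.22}). I expect Step 3 to be the technically hardest part, because every Hölder split must simultaneously balance the $\sigma$-powers demanded by $A_1$, $A_2$ and the interpolation exponent $(Np-2N)/(2p)$, with no room to spare in three dimensions.
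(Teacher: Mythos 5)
Your proposal follows essentially the same route as the paper's proof: reduce the cubic and quartic terms to $F$, $w$ and the pressure via (\ref{3VDD-full-E2.16-1}) and (\ref{3VDD-full-E2.16-4}), interpolate using (\ref{3VDD-full-E2.16-2})--(\ref{3VDD-full-E2.16-3}) against the bootstrap quantities $A_1,A_2$ and the energy bound, handle the $N=3$ term $\int\sigma|\nabla u|^3$ by splitting at $T_1$ and invoking Lemma \ref{3VDD-full-L2.3} on $[0,T_1]$, and close by choosing $\varepsilon$ small. The only cosmetic difference is that the paper's exponents ($1$, $2\theta$, $\tfrac34(1+\theta)$, $\tfrac14+\theta$) exceed $\theta$ for every $\theta\in(0,1)$, so no additional smallness of $\theta$ is needed.
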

\begin{proof}
Since we obtain this proposition in $\mathbb{R}^2$ in
\cite{Zhang2008} (Proposition 2.2), then we only prove this
proposition in $\mathbb{R}^3$ in this paper.

  From Proposition \ref{3VDD-full-P2.1} and Lemmas
  \ref{3VDD-full-L2.1}--\ref{3VDD-full-L2.2}, we have
        \begin{equation}
          \textrm{LHS of }(\ref{3VDD-full-E2.22})\leq C(C_0+C_f)
          +C\int^{T\wedge T^*}_0\int_{\mathbb{R}^3}\left(\sigma|\nabla u|^3
          +\sigma^3|u|^4+\sigma^3|\nabla u|^4
          \right)dxds.\label{3VDD-full-E2.25-0}
        \end{equation}
From  (\ref{3VDD-full-E2.16-1}), we get
    \begin{equation}
      \int^{T\wedge T^*}_0\int_{\mathbb{R}^3}\sigma^3|\nabla u|^4dxds\leq
      \int^{T\wedge T^*}_0\int_{\mathbb{R}^3}\sigma^3\left[
      |F|^4+|w|^4+|P-P(\tilde{\rho})|^4
      \right]dxds.\label{3VDD-full-E2.26}
    \end{equation}
From (\ref{3VDD-full-E2.1-2}), (\ref{3VDD-full-E2.1}),
(\ref{3VDD-full-E2.16-0}), (\ref{3VDD-full-E2.16}) and
(\ref{3VDD-full-E2.16-2})--(\ref{3VDD-full-E2.16-4}) we obtain
    \begin{eqnarray}
        && \int^{T\wedge T^*}_0\int_{\mathbb{R}^3}\sigma^3\left(|F|^4+|w|^4\right)dxds\nonumber\\
                &\leq&
      C\int^{T\wedge T^*}_0\sigma^3\left[\left(\int_{\mathbb{R}^3} |F|^2dx
      \right)^\frac{1}{2}\left(\int_{\mathbb{R}^3} |\nabla F|^2dx
      \right)^\frac{3}{2}+\left(\int_{\mathbb{R}^3} |w|^2dx
      \right)^\frac{1}{2}\left(\int_{\mathbb{R}^3} |\nabla w|^2dx
      \right)^\frac{3}{2}\right]ds\nonumber\\
            &\leq&C\!\!\sup_{t\in[0,T]\cap[0,T^*)}\!\!\left\{
            \int_{\mathbb{R}^3}\sigma(|\nabla u|^2+|\rho-\tilde{\rho}|^2)dx
            \int_{\mathbb{R}^3}\sigma^3\left(\rho|\dot{u}|^2+|f|^2
            \right)dx
            \right\}^\frac{1}{2}
            \int^{T\wedge T^*}_0\!\!\!\!\!\!\int_{\mathbb{R}^3}\sigma\left(\rho|\dot{u}|^2+|f|^2
            \right)dxds\nonumber\\
                &\leq&C(C_0+C_f)^{2\theta}+C(C_0+C_f)^2,\label{3VDD-full-E2.28}
    \end{eqnarray}
        \begin{eqnarray}
        \int^{T\wedge T^*}_0\int_{\mathbb{R}^3}\sigma^3|\rho-\tilde{\rho}|^4dxds&\leq&
        C\left(\int^{T\wedge T^*}_0\int_{\mathbb{R}^3}\sigma^3|F|^4dxds+C_0+C_f
        \right)\nonumber\\
            &\leq&C(C_0+C_f)^{2\theta}+C(C_0+C_f),
        \end{eqnarray}
    \begin{eqnarray}
        \int^{T\wedge T^*}_0\int_{\mathbb{R}^3}\sigma^3|u|^4dxds&\leq&
        C(C_0+C_f)^\frac{1}{3}\int^{T\wedge T^*}_0\sigma^3\left(\|\nabla u\|_{L^2}^3+\|\nabla
        u\|_{L^2}^4
        \right)ds\nonumber\\
            &\leq&CA_1^\frac{1}{2}(C_0+C_f)^\frac{4}{3}+CA_1(C_0+C_f)^\frac{4}{3}.\label{3VDD-full-E2.30}
        \end{eqnarray}
From (\ref{3VDD-full-E2.26})--(\ref{3VDD-full-E2.30}), we have
    \begin{equation}
      \int^{T\wedge T^*}_0\int_{\mathbb{R}^3}\sigma^3(|u|^4+|\nabla u|^4)dxds
      \leq C (C_0+C_f)^{2\theta}+C(C_0+C_f).\label{3VDD-full-E2.31}
    \end{equation}
    Similarly, we get
     \begin{eqnarray}
      &&\int^{T\wedge T^*}_{T_1\wedge T\wedge T^*}\int_{\mathbb{R}^3}\sigma|\nabla u|^3dxds\nonumber\\
      &\leq&
      \int^{T\wedge T^*}_{T_1\wedge T\wedge T^*}\int_{\mathbb{R}^3}\left(\sigma^2|\nabla u|^4+|\nabla u|^2
      \right)dxds\nonumber\\
      &\leq& C(T_1)
      \int^{T\wedge T^*}_{T_1\wedge T\wedge T^*}\int_{\mathbb{R}^3}\left(\sigma^3|\nabla u|^4+|\nabla u|^2
      \right)dxds\nonumber\\
        &\leq& C(M_q) (C_0+C_f)^{2\theta}+C(M_q)(C_0+C_f),\label{3VDD-full-E2.32}
    \end{eqnarray}
    \begin{eqnarray}
      &&\int^{T_1\wedge T\wedge T^*}_0\int_{\mathbb{R}^3}\sigma|\nabla u|^3dxds\nonumber\\
      &\leq& C(C_0+C_f)+
      \int^{T_1\wedge T\wedge T^*}_0\int_{\mathbb{R}^3}\sigma\left(|F|^3+|w|^3
      \right)dxds\nonumber\\
        &\leq& C(C_0+C_f)+
      \int^{T_1\wedge T\wedge T^*}_0\sigma\left(\int_{\mathbb{R}^3}(|\nabla u|^2+|\rho-\tilde{\rho}|^2)dx\right)^\frac{3}{4}
            \left(\int_{\mathbb{R}^3}\left(\rho|\dot{u}|^2+|f|^2
            \right)dx\right)^\frac{3}{4}ds\nonumber\\
      &\leq& C(M_q)(C_0+C_f)+C(M_q)(C_0+C_f)^\frac{3}{4}A_1^\frac{3}{4}+
      \left(\int^{T\wedge
      T^*}_0\sigma\|\sqrt{\rho}\dot{u}\|_{L^2}^2ds
      \right)^\frac{3}{4}\nonumber\\
        &&\times
      \left(\sup_{t\in[0,T_1\wedge T]\cap[0, T^*)}
      \sigma\|\nabla u\|_{L^2}^4\int^{T\wedge
      T^*}_0\|\nabla u\|_{L^2}^2ds+C\int^{T\wedge
      T^*}_0\|\rho-\tilde{\rho}\|_{L^2}^6dt
      \right)^{\frac{1}{4}}
      \nonumber\\
      &\leq& C(M_q)(C_0+C_f)+C(M_q)(C_0+C_f)^{\frac{3}{4}(1+\theta)}
      +C(M_q)(C_0+C_f)^{\frac{1}{4}+\theta}.\label{3VDD-full-E2.32-10}
    \end{eqnarray}
Then, from (\ref{3VDD-full-E2.1-2}), (\ref{3VDD-full-E2.25-0}) and
(\ref{3VDD-full-E2.31})--(\ref{3VDD-full-E2.32-10}), we obtain
     \begin{eqnarray}
         && \textrm{LHS of }(\ref{3VDD-full-E2.22})\nonumber\\
         &\leq& C(M_q)(C_0+C_f)^{1\wedge2\theta\wedge\frac{3}{4}(1+\theta)\wedge(\frac{1}{4}+\theta)}\nonumber\\
          &\leq&
          (C_0+C_f)^\theta ,
        \end{eqnarray}
when
    \begin{equation}
      C(M_q) \varepsilon^{(1-\theta)\wedge \theta\wedge(\frac{3}{4}-\frac{1}{4}\theta)\wedge\frac{1}{4}}
      \leq1.
    \end{equation}
\end{proof}

Then, we consider the H\"{o}lder continuity of $u$ in the
following lemma.
\begin{lem}
  Let $\alpha\in(0,1)$ when $N=2$, $\alpha\in(0,\frac{1}{2}]$ when
$N=3$. When $t\in(0,T]\cap(0,T^*)$, we have
    \begin{equation}
    <u(\cdot,t)>^\alpha\leq C\left(\|\rho\dot{u}\|_{L^2}^{\frac{N-2+2\alpha}{2}}
        (\|\nabla
        u\|_{L^2}^{\frac{4-N-2\alpha}{2}}+(C_0+C_f)^{\frac{4-N-2\alpha}{4}})
    +\|\nabla u\|_{L^2}+(C_0+C_f)^\frac{1-\alpha}{N}
         \right).\label{3VDD-full-E2.35}
    \end{equation}
\end{lem}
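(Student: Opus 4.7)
The plan is to derive the Hölder seminorm bound from an $L^p$ bound on $\nabla u$ via the classical Morrey embedding, and then feed in the Lemma \ref{3VDD-full-L2.2} estimates for the Helmholtz decomposition pieces of $\nabla u$.

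First I would choose $p$ so that Morrey's inequality gives $<u(\cdot,t)>^\alpha\leq C\|\nabla u\|_{L^p}$, namely $p=N/(1-\alpha)$. Checking the admissible ranges: for $N=2$ and $\alpha\in(0,1)$ one has $p\in(2,\infty)$; for $N=3$ and $\alpha\in(0,\tfrac12]$ one has $p\in(3,6]$. Both ranges are precisely those for which estimates (\ref{3VDD-full-E2.16-1}) and (\ref{3VDD-full-E2.16-3}) are available.

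Next I would apply (\ref{3VDD-full-E2.16-1}) to reduce matters to bounding $\|F\|_{L^p}+\|w\|_{L^p}+\|P-P(\tilde\rho)\|_{L^p}$, then apply (\ref{3VDD-full-E2.16-3}) to the first two terms. A short calculation with $p=N/(1-\alpha)$ gives
\[
\tfrac{Np-2N}{2p}=\tfrac{N-2+2\alpha}{2},\qquad \tfrac{2N-Np+2p}{2p}=\tfrac{4-N-2\alpha}{2},
\]
which are exactly the exponents appearing in the statement. The pressure factors $\|P-P(\tilde\rho)\|_{L^2}^{(4-N-2\alpha)/2}$ that arise inside the product are then absorbed using the uniform bound $0\leq\rho\leq\bar\rho$ together with Proposition \ref{3VDD-full-P2.1}, which yields $\|P-P(\tilde\rho)\|_{L^2}\leq C(C_0+C_f)^{1/2}$; raising this to the relevant power produces the $(C_0+C_f)^{(4-N-2\alpha)/4}$ factor.

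For the remaining ``low-order'' pieces I would control $\|P-P(\tilde\rho)\|_{L^p}$ by interpolating the $L^2$ energy bound against the uniform $L^\infty$ bound, obtaining
\[
\|P-P(\tilde\rho)\|_{L^p}\leq C\|P-P(\tilde\rho)\|_{L^2}^{2/p}\leq C(C_0+C_f)^{1/p}=C(C_0+C_f)^{(1-\alpha)/N},
\]
and bound $\|f\|_{L^2}\leq C(C_0+C_f)^{1/2}$ from the definition of $C_f$; since $\varepsilon\leq1$ and $(1-\alpha)/N\leq 1/2$, this term is absorbed into the $(C_0+C_f)^{(1-\alpha)/N}$ contribution. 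Collecting everything gives precisely (\ref{3VDD-full-E2.35}).

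The proof is essentially bookkeeping, so I do not expect a real obstacle; the only mildly delicate point is verifying that the exponents produced by Morrey together with (\ref{3VDD-full-E2.16-3}) line up with those in the statement, and that the $\|f\|_{L^2}$ and pressure-$L^p$ contributions can indeed all be subsumed into the single $(C_0+C_f)^{(1-\alpha)/N}$ term under $\varepsilon\leq 1$.
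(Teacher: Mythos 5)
Your proposal is correct and follows essentially the same route as the paper: Morrey/Sobolev embedding with $p=N/(1-\alpha)$, then (\ref{3VDD-full-E2.16-1}) and (\ref{3VDD-full-E2.16-3}) for the $F$ and $w$ pieces, interpolation of $\|P-P(\tilde{\rho})\|_{L^p}$ between $L^2$ and $L^\infty$, and absorption of the $\|f\|_{L^2}$ and pressure contributions into the $(C_0+C_f)^{(1-\alpha)/N}$ term using $\varepsilon\leq1$. The exponent bookkeeping you carried out matches the paper's computation exactly.
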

\begin{proof} Let  $p=\frac{N}{1-\alpha}$.
 From (\ref{3VDD-full-E2.16-1}),
  (\ref{3VDD-full-E2.16-3}) and Sobolev's
embedding theorem, we have
    \begin{eqnarray*}
    &&<u(\cdot,t)>^\alpha\\
        &\leq& C\|\nabla u\|_{L^p}\nonumber\\
    &\leq&C(\|F\|_{L^p}
          +\|w\|_{L^p}+\|P-P(\tilde{\rho})\|_{L^p}
          )\nonumber\\
    &\leq&C\left(\|\rho\dot{u}\|_{L^2}^{\frac{Np-2N}{2p}}
        (\|\nabla
        u\|_{L^2}^{\frac{2N-Np+2p}{2p}}+\|P-P(\tilde{\rho})\|_{L^2}^{\frac{2N-Np+2p}{2p}})
    +\|\nabla u\|_{L^2}+\|f\|_{L^2}+\|P-P(\tilde{\rho})\|_{L^2}
         \right)\nonumber\\
            &&+C\|\rho-\tilde{\rho}\|_{L^2}^\frac{2}{p}
            \|\rho-\tilde{\rho}\|_{L^\infty}^{1-\frac{2}{p}}\\
    &\leq&C\left(\|\rho\dot{u}\|_{L^2}^{\frac{Np-2N}{2p}}
        (\|\nabla
        u\|_{L^2}^{\frac{2N-Np+2p}{2p}}+(C_0+C_f)^{\frac{2N-Np+2p}{4p}})
    +\|\nabla u\|_{L^2}+(C_0+C_f)^\frac{1}{p}
         \right).
    \end{eqnarray*}
\end{proof}

\begin{prop}\label{3VDD-full-P2.5}
 If $u_0\in H^1$, $(\rho,u)$
  is a smooth solution of (\ref{3VDD-full-E1.1})--(\ref{3VDD-full-E1.2}) as
  in Proposition \ref{3VDD-full-P2.1}, then we have
    \begin{equation}
      \sup_{t\in[0,T]\cap[0,T^*)}\int
      |\nabla
      u|^2dx+\int^{T\wedge T^*}_0\int\rho|\dot{u}|^2dxdt
        \leq C(M_q).\label{3VDD-full-E2.59}
    \end{equation}
\end{prop}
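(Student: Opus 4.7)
The plan is to split the time interval $[0,T]\cap[0,T^*)$ into a short-time piece $[0,T_1]$ and a long-time piece $[T_1,T]$, where $T_1$ is the positive time from Lemma \ref{3VDD-full-L2.3} (for $N=3$; the $N=2$ case either follows from \cite{Zhang2008} or from the two-dimensional analogue obtained by closing (\ref{3VDD-full-E2.4}) directly via the better Gagliardo--Nirenberg estimates in Lemma \ref{3VDD-full-L2.2}). The point is that Proposition \ref{3VDD-full-P2.2} already provides the $\sigma$-weighted bound
$$\sup_{t}\sigma\int|\nabla u|^2\,dx+\int_0^{T\wedge T^*}\int\sigma\rho|\dot u|^2\,dx\,dt\leq(C_0+C_f)^\theta,$$
so it only remains to bootstrap this into an unweighted bound, and that bootstrap needs no work once $t$ is bounded away from zero.

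On $[0,T_1\wedge T]\cap[0,T^*)$, Lemma \ref{3VDD-full-L2.3} supplies the desired unweighted estimate directly:
$$\sup_{t\in[0,T_1\wedge T]\cap[0,T^*)}\int|\nabla u|^2\,dx+\int_0^{T_1\wedge T\wedge T^*}\int\rho|\dot u|^2\,dx\,dt\leq C(1+M_q).$$
This is where the hypothesis $u_0\in H^1$ is used, and where the smallness of $T_1\sim(1+M_q)^{-2}$ allows the nonlinear term $\int\sigma|\nabla u|^3$ in (\ref{3VDD-full-E2.4}) to be absorbed by the left-hand side without requiring $C_0+C_f$ to be small.

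On the complementary interval $[T_1,T]\cap[T_1,T^*)$, I would use the elementary observation $\sigma(t)=\min\{1,t\}\geq\min\{1,T_1\}=T_1$. Consequently, Proposition \ref{3VDD-full-P2.2} yields
$$T_1\sup_{t\in[T_1,T]\cap[T_1,T^*)}\int|\nabla u|^2\,dx+T_1\int_{T_1}^{T\wedge T^*}\int\rho|\dot u|^2\,dx\,dt\leq(C_0+C_f)^\theta\leq1,$$
and dividing by $T_1$ gives a bound of the form $C(M_q)$ on the long-time part. Combining the two contributions delivers (\ref{3VDD-full-E2.59}).

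The only nontrivial step is the short-time Lemma \ref{3VDD-full-L2.3}; once that is in hand, the rest of the argument is purely a matter of combining the $\sigma$-weighted estimate with the lower bound $\sigma(t)\geq T_1$ on $[T_1,\infty)$. The main obstacle I would expect is therefore already behind us, absorbed into Lemma \ref{3VDD-full-L2.3}; what must be checked carefully is that the constant $T_1$ obtained there depends only on $M_q$ (and not on $\underline{\rho}_1$ or on any quantity that the continuation argument would not allow us to control), so that the final constant in (\ref{3VDD-full-E2.59}) genuinely has the form $C(M_q)$.
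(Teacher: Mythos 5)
Your proof is correct, and it reaches (\ref{3VDD-full-E2.59}) by a route that differs from the paper's on the long-time piece. Both arguments rest on the same essential ingredient, the short-time unweighted estimate of Lemma \ref{3VDD-full-L2.3}; this is indeed where $u_0\in H^1$ enters, and the constant $T_1=\min\{1,\,(8C^3(1+M_q)^{2})^{-1}\}$ produced there depends only on $M_q$ and $\bar{\rho}$, not on $\underline{\rho}_1$, so the point you flag as needing verification does check out. The paper, however, then re-runs the energy identity of Lemma \ref{3VDD-full-L2.1} \emph{without} the $\sigma$-weight (possible because $\|\nabla u_0\|_{L^2}^2\leq M_q$), arriving at a bound by $C(C_0+C_f+M_q)+C\int_0^{T\wedge T^*}\int|\nabla u|^3\,dx\,ds$, and controls the cubic term by combining the computation inside the proof of Lemma \ref{3VDD-full-L2.3} on $[0,T_1]$ with the weighted estimate (\ref{3VDD-full-E2.32}) on $[T_1,T]$, where $\sigma\geq T_1$ is used to strip the weight. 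You instead skip the unweighted energy identity entirely and obtain the long-time bound by dividing the already-established weighted estimate of Proposition \ref{3VDD-full-P2.2} (equivalently, one could use the bootstrap hypothesis (\ref{3VDD-full-E2.1-2}) directly) by $\sigma\geq T_1$. This is more economical, since it avoids a second pass through the $\int|\nabla u|^3$ estimate, at the cost of yielding a piecewise bound rather than a single differential inequality on all of $[0,T]$; for the stated conclusion the two are equivalent, and your deferral of the $N=2$ case to \cite{Zhang2008} matches the paper's own practice elsewhere in the section.
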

\begin{proof}
  Using a similar argument as that in the proof of
  (\ref{3VDD-full-E2.4}), we have
    $$
      \sup_{t\in[0,T]\cap[0,T^*)}\int|\nabla
      u|^2dx+\int^{T\wedge T^*}_0\int\rho|\dot{u}|^2dxdt
            \leq C(C_0+C_f+M_q)+C\int^{T\wedge T^*}_0\int|\nabla u|^3dxds.
   $$
 From Lemma
\ref{3VDD-full-L2.3} and (\ref{3VDD-full-E2.32}), we can easily
obtain (\ref{3VDD-full-E2.59}).
\end{proof}

\begin{prop}\label{3VDD-full-P2.6}
 If $u_0\in H^1$, $(\rho,u)$
  is a smooth solution of (\ref{3VDD-full-E1.1})--(\ref{3VDD-full-E1.2}) as
  in Proposition \ref{3VDD-full-P2.1}, then we have
    \begin{equation}
      \sup_{t\in(0,T]\cap(0,T^*)}\sigma\int\rho|\dot{u}|^2dx
      +\int^{T\wedge T^*}_0\int\sigma|\nabla \dot{u}|^2dxdt
        \leq C(M_q).\label{3VDD-full-E2.62}
    \end{equation}
\end{prop}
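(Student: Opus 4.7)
The plan is to carry out the argument of Lemma~\ref{3VDD-full-L2.1} and Proposition~\ref{3VDD-full-P2.2} with the weight $\sigma$ in place of $\sigma^N$, now using the uniform bounds of Proposition~\ref{3VDD-full-P2.5} (which replace the weaker $A_1,A_2$-bounds that forced the heavier weight $\sigma^N$ in (\ref{3VDD-full-E2.5})). Concretely, I would apply $(\cdot)_t+\mathrm{div}((\cdot)u)$ to the momentum formulation (\ref{3VDD-full-E1.19}), multiply by $\sigma\dot u^j$, and integrate in $x$ and $t$. The time derivative of $\sigma$ contributes an extra source $\int_0^{t\wedge 1}\int\rho|\dot u|^2\,dx\,ds$, which is $\leq C(M_q)$ by Proposition~\ref{3VDD-full-P2.5}; the contributions from $f$ are controlled by $M_q$ through the $\sigma^2\|f_t\|_{L^2}^2$ and $\sigma^3\|\nabla f\|_{L^4}^4$ terms in the definition of $M_q$. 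The resulting inequality reads
$$\mathrm{LHS\ of\ }(\ref{3VDD-full-E2.62})\leq C(M_q)+C\int_0^{T\wedge T^*}\!\!\!\int\sigma\bigl(|u|^4+|\nabla u|^4\bigr)dx\,ds,$$
directly analogous to (\ref{3VDD-full-E2.25-0}).

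The benign $|u|^4$ contribution is harmless: by Gagliardo--Nirenberg (\ref{3VDD-full-E2.16}) one has $\|u\|_{L^4}^4\leq C\|u\|_{L^2}^{4-N}\|\nabla u\|_{L^2}^N$, and combining with the uniform $L^2$ bound on $u$ and $\nabla u$ from Propositions~\ref{3VDD-full-P2.1} and \ref{3VDD-full-P2.5} together with $\int_0^T\|\nabla u\|_{L^2}^2\,ds\leq C(C_0+C_f)$, one obtains $\int_0^{T\wedge T^*}\!\int|u|^4\,dx\,ds\leq C(M_q)$ in both $N=2,3$. The main obstacle is $\int_0^{T\wedge T^*}\!\int\sigma|\nabla u|^4\,dx\,ds$ in $\mathbb{R}^3$. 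Via the decomposition (\ref{3VDD-full-E2.16-1}), the Gagliardo--Nirenberg interpolation $\|F\|_{L^4}^4+\|w\|_{L^4}^4\leq C(\|F\|_{L^2}+\|w\|_{L^2})(\|\nabla F\|_{L^2}^3+\|\nabla w\|_{L^2}^3)$, the elliptic estimate (\ref{3VDD-full-E2.16-2}), and the uniform bounds $\|F\|_{L^2}+\|w\|_{L^2}\leq C(M_q)$ coming from (\ref{3VDD-full-E2.16-3}) and Proposition~\ref{3VDD-full-P2.5}, the whole task reduces to estimating $\int_0^{T\wedge T^*}\sigma\|\rho\dot u\|_{L^2}^3\,ds$.

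Setting $X:=\mathrm{LHS\ of\ }(\ref{3VDD-full-E2.62})$, I would split
$$\int_0^{T\wedge T^*}\!\sigma\|\rho\dot u\|_{L^2}^3\,ds\leq\bar\rho^{1/2}\Bigl(\sup_{t}\sigma\|\sqrt{\rho}\,\dot u\|_{L^2}^2\Bigr)^{1/2}\!\int_0^{T\wedge T^*}\!\|\sqrt{\rho}\,\dot u\|_{L^2}^2\,ds\leq C(M_q)\,X^{1/2},$$
which yields the quadratic inequality $X\leq C(M_q)+C(M_q)\,X^{1/2}$; Young's inequality then closes the estimate and gives $X\leq C(M_q)$. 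The delicate aspect is precisely this self-referential cubic power on $\|\rho\dot u\|_{L^2}$: in three space dimensions it cannot be absorbed against a weight lighter than $\sigma^N$ without the improved uniform bound on $\|\nabla u\|_{L^2}$ furnished by Proposition~\ref{3VDD-full-P2.5}, and this is exactly why Proposition~\ref{3VDD-full-P2.5} was obtained as a separate intermediate step before the present one.
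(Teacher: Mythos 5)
Your argument is essentially the paper's own proof: the paper likewise arrives at $X\leq C(M_q)+C\int\!\!\int\sigma(|u|^4+|\nabla u|^4)\,dx\,ds$, reduces the quartic term through (\ref{3VDD-full-E2.16-1}) and the Gagliardo--Nirenberg interpolation $\|F\|_{L^4}^4\leq C\|F\|_{L^2}\|\nabla F\|_{L^2}^3$ (and similarly for $w$) to the cubic quantity $\int\sigma\|\sqrt{\rho}\dot u\|_{L^2}^3\,ds\leq C(M_q)\sup_t\bigl(\sigma^{1/2}\|\sqrt{\rho}\dot u\|_{L^2}\bigr)$, and closes with Young's inequality, the only cosmetic difference being that the paper first disposes of the range $t\geq1$ via (\ref{3VDD-full-E2.31}) and performs the direct estimate only on $[0,1]$. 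One small caution: for the $|u|^4$ term you should invoke (\ref{3VDD-full-E2.16-0}) rather than a ``uniform $L^2$ bound on $u$,'' since the constants in this section must be independent of $\underline{\rho}_1$ and only $\|\sqrt{\rho}\,u\|_{L^2}$ is controlled by the energy.
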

\begin{proof}
Since we obtain this proposition in $\mathbb{R}^2$ in
\cite{Zhang2008} (Proposition 2.4), then we only prove this
proposition in $\mathbb{R}^3$ in this paper.

  Using a similar argument as that in the proof of
  (\ref{3VDD-full-E2.5}), from (\ref{3VDD-full-E2.59}), we have
    $$
      \sup_{t\in(0,T]\cap(0,T^*)}\sigma\int_{\mathbb{R}^3}\rho|\dot{u}|^2dx
      +\int^{T\wedge T^*}_0\int_{\mathbb{R}^3}\sigma|\nabla \dot{u}|^2dxdt
            \leq C(M_q)+C\int^{T\wedge T^*}_0\int_{\mathbb{R}^3}\sigma\left(|u|^4+|\nabla u|^4\right)dxds.
   $$
Without loss of generality, assume that $T>1$. From
 (\ref{3VDD-full-E2.31}), we get
        $$
   \sup_{t\in(0,T]\cap(0,T^*)}\sigma\int_{\mathbb{R}^3}\rho|\dot{u}|^2dx
      +\int^{T\wedge T^*}_0\int_{\mathbb{R}^3}\sigma|\nabla \dot{u}|^2dxdt
            \leq C(M_q)+C\int^{1\wedge T^*}_0\int_{\mathbb{R}^3}\sigma\left(|u|^4+|\nabla u|^4\right)dxds.
   $$
  From (\ref{3VDD-full-E2.1}),
(\ref{3VDD-full-E2.16-0})--(\ref{3VDD-full-E2.16-1})
 and (\ref{3VDD-full-E2.59}), we have
    $$
\sup_{t\in(0,T]\cap(0,T^*)}\sigma\int_{\mathbb{R}^3}\rho|\dot{u}|^2dx
      +\int^{T\wedge T^*}_0\int_{\mathbb{R}^3}\sigma|\nabla \dot{u}|^2dxdt
            \leq C(M_q)+C\int^{1\wedge T^*}_0\int_{\mathbb{R}^3}\sigma(|F|^4+|w|^4)dxds.
    $$
  From (\ref{3VDD-full-E2.1}),
(\ref{3VDD-full-E2.16-2}), (\ref{3VDD-full-E2.16}) and
(\ref{3VDD-full-E2.59}), we obtain
    \begin{eqnarray}
        && \int^{1\wedge T^*}_0\int_{\mathbb{R}^3}\sigma\left(|F|^4+|w|^4\right)dxds\nonumber\\
                &\leq&
      C\int^{1\wedge T^*}_0\sigma\left(\int_{\mathbb{R}^3} |F|^2dx
      \right)^{\frac{1}{2}}\left(\int_{\mathbb{R}^3} |\nabla F|^2dx
      \right)^{\frac{3}{2}}ds+
      C\int^{1\wedge T^*}_0\sigma\left(\int_{\mathbb{R}^3} |w|^2dx
      \right)^{\frac{1}{2}}\left(\int_{\mathbb{R}^3} |\nabla w|^2dx
      \right)^{\frac{3}{2}}ds\nonumber\\
            &\leq&C\int^{1\wedge T^*}_0\sigma\left(\int_{\mathbb{R}^3}(|\nabla
            u|^2+|\rho-\tilde{\rho}|^2)dx\right)^{\frac{1}{2}}
            \left(\int_{\mathbb{R}^3}\left(\rho|\dot{u}|^2+|f|^2
            \right)dx\right)^{\frac{3}{2}}ds\nonumber\\
                &\leq&C(M_q)+
                C(M_q)\sup_{t\in(0,T]\cap(0,T^*)}
                \sigma^\frac{1}{2}\|\sqrt{\rho}\dot{u}\|_{L^2}.
    \end{eqnarray}
Using Young's inequality, we can finish the proof of this
proposition.
\end{proof}

\begin{lem} For any $p\in[2,\infty)$ when $N=2$, $p\in[2,6]$ when $N=3$, we have
    \begin{equation}
     \|\dot{u}\|_{L^p}\leq C_p\|\sqrt{\rho}\dot{u}\|_{L^2}^{\frac{2N-Np+2p}{2p}}
     \|\nabla\dot{u}\|_{L^2}^{\frac{Np-2N}{2p}}+C_p\|\nabla\dot{u}\|_{L^2}.\label{3VDD-full-E2.64-0}
    \end{equation}
\end{lem}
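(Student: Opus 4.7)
The plan is to obtain the claimed bound in two stages: first apply a standard Gagliardo--Nirenberg interpolation in $\mathbb{R}^N$ to $\dot{u}$, and then replace the unweighted $L^2$ norm of $\dot{u}$ by $\|\sqrt{\rho}\dot{u}\|_{L^2}+\|\nabla\dot{u}\|_{L^2}$.

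For the first step, note that since $(\rho,u)$ is smooth with $u,u_t$ decaying at infinity, we have $\dot{u}\in H^1(\mathbb{R}^N)$. Applying estimate (\ref{3VDD-full-E2.16}) to $v=\dot{u}$ yields
$$
\|\dot{u}\|_{L^p}\leq C_p\|\dot{u}\|_{L^2}^{\frac{2N-Np+2p}{2p}}\|\nabla\dot{u}\|_{L^2}^{\frac{Np-2N}{2p}},
$$
for the indicated range of $p$. So it suffices to prove the auxiliary bound
$$
\|\dot{u}\|_{L^2}\leq C\left(\|\sqrt{\rho}\dot{u}\|_{L^2}+\|\nabla\dot{u}\|_{L^2}\right),
$$
since the inequality $(a+b)^{\alpha}\leq a^{\alpha}+b^{\alpha}$ together with $\alpha+(1-\alpha)=1$ will then produce exactly the two terms on the right-hand side of (\ref{3VDD-full-E2.64-0}).

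For the auxiliary bound, decompose $\mathbb{R}^N=A\cup A^c$, where $A=\{x:\rho(x,t)<\tilde{\rho}/2\}$. On $A^c$ we have $\rho\geq\tilde{\rho}/2$, hence
$$
\int_{A^c}|\dot{u}|^2 dx\leq \frac{2}{\tilde{\rho}}\int\rho|\dot{u}|^2 dx=\frac{2}{\tilde{\rho}}\|\sqrt{\rho}\dot{u}\|_{L^2}^2.
$$
For the set $A$, I first observe that $|A|$ is bounded independently of the solution: from the assumptions on $P$ one checks that $G$ extends continuously to $[0,\tilde{\rho}/2]$ with $G(\rho)\geq c_0>0$ there, so Proposition~\ref{3VDD-full-P2.1} gives $|A|\leq c_0^{-1}\int G(\rho)\,dx\leq C(C_0+C_f)\leq C$. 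Then in dimension $N=3$, H\"older and Sobolev ($\dot{u}\in H^1$, decaying at infinity) give
$$
\int_A|\dot{u}|^2 dx\leq |A|^{2/3}\|\dot{u}\|_{L^6}^2\leq C\|\nabla\dot{u}\|_{L^2}^2.
$$
In dimension $N=2$, Sobolev does not embed $H^1$ into $L^\infty$; instead I use (\ref{3VDD-full-E2.16}) with some fixed $q>2$ and H\"older to write
$$
\int_A|\dot{u}|^2 dx\leq |A|^{1-2/q}\|\dot{u}\|_{L^q}^2\leq C_q\|\dot{u}\|_{L^2}^{4/q}\|\nabla\dot{u}\|_{L^2}^{2-4/q},
$$
and then Young's inequality with exponents $(q/2,q/(q-2))$ gives
$$
\int_A|\dot{u}|^2 dx\leq \tfrac12\|\dot{u}\|_{L^2}^2+C\|\nabla\dot{u}\|_{L^2}^2.
$$
Adding the $A$ and $A^c$ contributions and absorbing the $\tfrac12\|\dot{u}\|_{L^2}^2$ term on the left yields the auxiliary bound in both dimensions, which completes the proof.

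The only subtle point is the 2D case, where the lack of an $L^\infty$-type Sobolev embedding forces the absorption argument above; in 3D the estimate is immediate from $\|\dot{u}\|_{L^6}\lesssim\|\nabla\dot{u}\|_{L^2}$. Everything else is the standard Gagliardo--Nirenberg machinery combined with the energy estimate (\ref{3VDD-full-E2.1}), which supplies the crucial bound on $|A|$.
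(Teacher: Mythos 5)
Your proof is correct, but it takes a genuinely different route from the paper's. The paper avoids any decomposition of space: it writes $\tilde{\rho}\int|\dot{u}|^2dx\leq\int\rho|\dot{u}|^2dx+\|\rho-\tilde{\rho}\|_{L^2}\|\dot{u}\|_{L^4}^2$, then applies (\ref{3VDD-full-E2.16}) with $p=4$ and Young's inequality to absorb $\|\dot u\|_{L^2}^2$, using only the $L^2$ bound on $\rho-\tilde\rho$ (which comes from $|\rho-\tilde\rho|^2\leq CG(\rho)$ and Proposition \ref{3VDD-full-P2.1}); this yields the same auxiliary bound $\|\dot{u}\|_{L^2}^2\leq C\|\sqrt{\rho}\dot{u}\|_{L^2}^2+C\|\nabla\dot{u}\|_{L^2}^2$ in one line, uniformly in $N$, and then concludes exactly as you do via (\ref{3VDD-full-E2.16}). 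Your argument instead splits $\mathbb{R}^N$ into the low-density set $A=\{\rho<\tilde\rho/2\}$ and its complement, and controls $|A|$ via the lower bound $G\geq c_0>0$ on $[0,\tilde\rho/2]$ together with the energy estimate; this is a standard and perfectly valid alternative (the verification that $G(\rho)\to P(\tilde\rho)>0$ as $\rho\to0^+$ is needed and does hold under (\ref{3VDD-full-E1.5})), and it has the mild advantage of making transparent exactly where the lower bound $\rho\geq\tilde\rho/2$ is used versus where only the smallness of the vacuum-adjacent region matters. The price is the case split between $N=2$ and $N=3$ (absorption in 2D, Sobolev in 3D), which the paper's H\"older-plus-interpolation argument handles uniformly. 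One cosmetic remark: the letter $q$ is already reserved in the paper for the exponent fixed in (\ref{3VDD-full-E1.10}), so you should rename the auxiliary exponent in your 2D step to avoid a clash.
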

\begin{proof}
  Since
    $$
    \tilde{\rho}\int|\dot{u}|^2dx\leq \int\rho |\dot{u}|^2dx+
    \left(\int|\rho-\tilde{\rho}|^2
    dx    \right)^\frac{1}{2}
    \left(\int|\dot{u}|^4
    dx    \right)^\frac{1}{2},
    $$
applying (\ref{3VDD-full-E2.16}), we get
         $$
    \|\dot{u}\|_{L^2}^2\leq
    C\|\sqrt{\rho}\dot{u}\|_{L^2}^2+C\|\nabla \dot{u}\|_{L^2}^2.
    $$
From (\ref{3VDD-full-E2.16}), we can immediately obtain
(\ref{3VDD-full-E2.64-0}).
\end{proof}

\begin{lem}
  For any $q\in (0,2)$ when $N=2$, $q\in (1,\frac{4}{3})$ when $N=3$, we have
    \begin{equation}
      \int^{T\wedge T^*}_0\int\sigma^{p_1-1}\rho|\dot{u}|^{2+q}dxds
      \leq C(M_q).\label{3VDD-full-E2.65}
    \end{equation}
\end{lem}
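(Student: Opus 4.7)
The plan is to derive a differential inequality for $\int\rho|\dot u|^{2+q}dx$ and then integrate it against a suitable power of $\sigma$. Apply the material derivative $D_t=\partial_t+u\cdot\nabla$ to the momentum identity \eqref{3VDD-full-E1.19}; using $D_t(\rho\psi)=\rho\dot\psi-\rho\psi\,\mathrm{div}\,u$ (from the continuity equation) and the commutator $[D_t,\partial_j]=-\partial_ju^i\partial_i$, one obtains
\begin{equation*}
\rho\ddot u^j=\partial_j\dot F-\partial_ju^i\partial_iF+\mu\partial_k\dot w^{j,k}-\mu\partial_ku^i\partial_iw^{j,k}+\rho\dot f^j-\rho f^j\mathrm{div}\,u+\rho\dot u^j\mathrm{div}\,u.
\end{equation*}
Multiplying by $(2+q)^{-1}|\dot u|^q\dot u^j$ and integrating, the left side becomes $(2+q)^{-1}\frac{d}{dt}\int\rho|\dot u|^{2+q}dx$ (via the continuity equation applied to $g=|\dot u|^{2+q}$). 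After integration by parts on the $\partial_j\dot F$ and $\mu\partial_k\dot w^{j,k}$ terms, combined with the expansion $\dot F=(\lambda+2\mu)\mathrm{div}\,\dot u+\text{quadratic-in-}\nabla u$ (using $\dot\rho=-\rho\,\mathrm{div}\,u$ and $\dot{\mathrm{div}\,u}=\mathrm{div}\,\dot u-\partial_iu^j\partial_ju^i$), I extract a coercive contribution $c\int|\dot u|^q|\nabla\dot u|^2dx$ whose positivity is ensured by the structural condition $q^2<4\mu/(\mu+\lambda(\rho))$ in \eqref{3VDD-full-E1.10}, after absorbing the cross term $q\int(\lambda+2\mu)|\dot u|^{q-2}\dot u^k\dot u^j\partial_j\dot u^k\mathrm{div}\,\dot u\,dx$ by Young's inequality.

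Next I multiply by $\sigma^{p_1}$ and integrate in $t\in[0,T\wedge T^*]$; integration by parts in time produces the boundary term $\sigma^{p_1}(T)\int\rho|\dot u|^{2+q}(T)dx$, the coercive term $c\int_0^{T\wedge T^*}\sigma^{p_1}\int|\dot u|^q|\nabla\dot u|^2dxdt$, and a correction $p_1\int_0^{1\wedge T}t^{p_1-1}\int\rho|\dot u|^{2+q}dxdt$, all bounded by $\int_0^{T\wedge T^*}\sigma^{p_1}|\mathrm{RHS}|dt$. The RHS integrals---of the types $|\dot u|^{1+q}|\nabla u|(|F|+|w|+|P-P(\tilde\rho)|)$, $\rho|\dot f||\dot u|^{1+q}$, $\rho|f|^2|\dot u|^q$, $\rho|f||\mathrm{div}\,u||\dot u|^{1+q}$, and $\rho|\dot u|^{2+q}|\mathrm{div}\,u|$---are controlled using Lemma \ref{3VDD-full-L2.2} (in particular \eqref{3VDD-full-E2.16-1}--\eqref{3VDD-full-E2.16-3}), Propositions \ref{3VDD-full-P2.1}, \ref{3VDD-full-P2.5}, \ref{3VDD-full-P2.6}, and the force moments $\int\sigma^{p_1}\|f_t\|_{L^{2+q}}^{2+q}dt$ and $\int\sigma^{p_1}\|\nabla f\|_{L^{p_2}}^{p_2}dt$ encoded in $M_q$. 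The target quantity $\int_0^{T\wedge T^*}\sigma^{p_1-1}\int\rho|\dot u|^{2+q}dxdt$ is then recovered by splitting into $[0,1]$ and $[1,T\wedge T^*]$: on $[0,1]$ it reduces directly to the $p_1\int_0^1 t^{p_1-1}\int\rho|\dot u|^{2+q}dxdt$ piece already bounded above; on $[1,T\wedge T^*]$ one applies the interpolation \eqref{3VDD-full-E2.64-0} with $p=2+q$ and uses H\"older in time together with the bounds $\sup_t\sigma\|\sqrt\rho\dot u\|_{L^2}^2\leq C(M_q)$ and $\int\sigma\|\nabla\dot u\|_{L^2}^2dt\leq C(M_q)$ from Propositions \ref{3VDD-full-P2.5}--\ref{3VDD-full-P2.6}.

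The main technical obstacle is the precise H\"older splitting of the cross terms such as $\sigma^{p_1}|\dot u|^{1+q}|\nabla u||F|$: one factor $|\dot u|^{q/2}|\nabla\dot u|$ must be peeled off and absorbed into the coercive term through Young's inequality, while the residual $|\dot u|^{1+q/2}|\nabla u||F|$ is estimated using \eqref{3VDD-full-E2.16-1}--\eqref{3VDD-full-E2.16-3} combined with Sobolev embedding and the $f$-moment bounds in $M_q$. The specific choices $p_1=2+q/2$ for $N=2$ and $p_1=1+5q/4$ for $N=3$, together with the matching $p_2$ in \eqref{3VDD-full-E1.10}, are calibrated so that this absorption/integration closes without smallness assumptions and yields a constant $C(M_q)$ independent of $\underline{\rho}_1$.
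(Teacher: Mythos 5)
Your proposal conflates this lemma with the \emph{next} result, Proposition \ref{3VDD-full-P2.7}: the differential inequality for $\int\rho|\dot u|^{2+q}dx$ obtained by applying $D/Dt$ to \eqref{3VDD-full-E1.19}, testing with $|\dot u|^q\dot u$, multiplying by $\sigma^{p_1}$ and using the structural condition $q^2<4\mu/(\mu+\lambda)$ is exactly the machinery that proves \eqref{3VDD-full-E2.66}. In that argument the term $p_1\int_0^{1\wedge T}t^{p_1-1}\int\rho|\dot u|^{2+q}\,dx\,dt$ coming from differentiating the weight $\sigma^{p_1}$ sits on the \emph{source} side of the inequality: it is not "bounded by $\int\sigma^{p_1}|\mathrm{RHS}|$", it must be bounded \emph{before} the Gronwall-type closure can go through. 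That quantity is precisely the statement of the present lemma restricted to $[0,1]$, so your claim that on $[0,1]$ the target "reduces directly to the piece already bounded above" is circular. The logical order in the paper is the reverse of yours: \eqref{3VDD-full-E2.65} is proved first, independently, and then fed into the energy argument for \eqref{3VDD-full-E2.66}.

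The actual proof is a two-line interpolation requiring none of this structure. For $N=3$ one writes, by H\"older in space, $\int\rho|\dot u|^{2+q}dx\le C\|\sqrt{\rho}\dot u\|_{L^2}^{\frac{4-q}{2}}\|\dot u\|_{L^6}^{\frac{3q}{2}}$, bounds $\|\dot u\|_{L^6}\le C\|\nabla\dot u\|_{L^2}$ by \eqref{3VDD-full-E2.64-0} with $p=6$, and then applies H\"older in time, distributing $\sigma^{5q/4}$ as $\bigl(\sigma\|\nabla\dot u\|_{L^2}^2\bigr)^{3q/4}\|\sqrt{\rho}\dot u\|_{L^2}^{\frac{4-3q}{2}}\bigl(\sigma\|\sqrt{\rho}\dot u\|_{L^2}^2\bigr)^{q/2}$; the three factors are controlled by \eqref{3VDD-full-E2.62}, \eqref{3VDD-full-E2.59} and \eqref{3VDD-full-E2.62} respectively, and $q<4/3$ makes the exponents admissible. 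Your $[1,T]$ treatment is in this spirit, but your choice $p=2+q$ in \eqref{3VDD-full-E2.64-0} leaves a residual term $\int\|\nabla\dot u\|_{L^2}^{2+q}dt$ with $2+q>2$, which is not controlled by the available bounds (there is no $\sup_t\|\nabla\dot u\|_{L^2}$ estimate at this stage); the endpoint $p=6$ applied after the spatial H\"older avoids this. No splitting of the time interval and no condition $q^2<4\mu/(\mu+\lambda)$ are needed for this lemma.
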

\begin{proof}
Since we obtain this lemma in $\mathbb{R}^2$ in \cite{Zhang2008}
(lemma 2.5), then we only prove this lemma in $\mathbb{R}^3$ in this
paper.

 Using H\"{o}lder's inequality, (\ref{3VDD-full-E2.59}),
(\ref{3VDD-full-E2.62}) and (\ref{3VDD-full-E2.64-0}) with $p=6$, we
have
    \begin{eqnarray*}
    &&\int^{T\wedge T^*}_0\int_{\mathbb{R}^3}\sigma^{\frac{5q}{4}}\rho|\dot{u}|^{2+q}dxds\\
        &\leq&C\int^{T\wedge T^*}_0\sigma^{\frac{5q}{4}}\|\sqrt{\rho}\dot{u}\|_{L^2}^\frac{4-q}{2}
        \|\dot{u}\|_{L^6}^{\frac{3q}{2}}ds\\
         &\leq&C\int^{T\wedge T^*}_0\sigma^{\frac{5q}{4}}\|\sqrt{\rho}\dot{u}\|_{L^2}^\frac{4-q}{2}
        \|\nabla\dot{u}\|_{L^2}^{\frac{3q}{2}}ds\\
            &\leq& C\left(\int^{T\wedge T^*}_0\sigma\|\nabla \dot{u}\|_{L^2}^2dt
            \right)^\frac{3q}{4}
            \left(\int^{T\wedge T^*}_0\|\sqrt{\rho} \dot{u}\|_{L^2}^2dt
            \right)^\frac{4-3q}{4}
            \left(\sup_{t\in[0,T]}\sigma\|\sqrt{\rho} \dot{u}\|_{L^2}^2
            \right)^\frac{q}{2}\\
                        &\leq& C(M_q).
    \end{eqnarray*}
\end{proof}

\begin{prop}\label{3VDD-full-P2.7}
 If $u_0\in H^1$, $(\rho,u)$
  is a smooth solution of (\ref{3VDD-full-E1.1})--(\ref{3VDD-full-E1.2}) as
  in Proposition \ref{3VDD-full-P2.6} and
    \begin{equation}
      q^2<\frac{4\mu}{\lambda(\rho)+\mu},
     \ \forall\ \rho\in[0,\bar{\rho}],\label{3VDD-full-E2.64}
    \end{equation}
  then we have
    \begin{equation}
      \sup_{t\in(0,T]\cap(0,T^*)}\sigma^{p_1}\int\rho|\dot{u}|^{2+q}dx
      +\int^{T\wedge T^*}_0\int\sigma^{p_1}|\dot{u}|^q|\nabla \dot{u}|^2dxdt
        \leq C(M_q).\label{3VDD-full-E2.66}
    \end{equation}
\end{prop}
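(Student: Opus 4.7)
The plan is to derive an evolution equation for the material acceleration $\ddot u$ and test it against $\sigma^{p_1}|\dot u|^q\dot u$. Applying the operator $\partial_t + \mathrm{div}(u\,\cdot)$ to the momentum form \eqref{3VDD-full-E1.19} and using the continuity equation, I obtain
$$
\rho\ddot u^j = \partial_j\dot F + \mu\,\partial_k\dot w^{j,k} + \rho\dot f^j + (\mathrm{div}\,u)(\rho\dot u^j - \rho f^j) - (\partial_j u^k)\partial_k F - \mu(\partial_k u^l)\partial_l w^{j,k}.
$$
Multiplying by $\sigma^{p_1}|\dot u|^q\dot u^j$ and integrating in $x$, the LHS collapses (by the identity $|\dot u|^q\dot u^j\ddot u^j = \tfrac{1}{2+q}\tfrac{D}{Dt}|\dot u|^{2+q}$ together with the continuity equation) to $\tfrac{1}{2+q}\tfrac{d}{dt}\bigl(\sigma^{p_1}\!\int\rho|\dot u|^{2+q}dx\bigr) - \tfrac{p_1}{2+q}\sigma^{p_1-1}\sigma'(t)\!\int\rho|\dot u|^{2+q}dx$; the second (time-derivative of weight) term is integrable in time by \eqref{3VDD-full-E2.65}.

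For the dissipation, I would substitute $\dot F = (2\mu+\lambda(\rho))\mathrm{div}\dot u + G_1$ and $\dot w^{j,k} = \partial_k\dot u^j - \partial_j\dot u^k + \Psi^{j,k}$, where $G_1$ and $\Psi^{j,k}$ are quadratic in $\nabla u$ (times factors of $\rho$, $P'(\rho)$, $\lambda'(\rho)$) and free of $\nabla\dot u$; the identity $\partial_j\dot F + \mu\,\partial_k\dot w^{j,k} = \mu\Delta\dot u^j + (\mu+\lambda)\partial_j\mathrm{div}\dot u + (\text{lower order})$ then reduces the principal part to the Lam\'e operator applied to $\dot u$. Integration by parts yields the second-order block
$$
-\int\sigma^{p_1}\Bigl[\mu|\dot u|^q|\nabla\dot u|^2 + \tfrac{q\mu}{4}|\dot u|^{q-2}\bigl|\nabla|\dot u|^2\bigr|^2 + (\mu+\lambda)|\dot u|^q(\mathrm{div}\dot u)^2\Bigr]dx
$$
together with the single indefinite cross term $-\tfrac{q(\mu+\lambda)}{2}\int\sigma^{p_1}|\dot u|^{q-2}(\mathrm{div}\dot u)(\dot u\cdot\nabla|\dot u|^2)dx$. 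Using the pointwise bound $\bigl|\dot u\cdot\nabla|\dot u|^2\bigr|\leq 2|\dot u|^2|\nabla\dot u|$ and Cauchy--Schwarz, this cross term is controlled by $q(\mu+\lambda)\bigl(\int|\dot u|^q(\mathrm{div}\dot u)^2\bigr)^{1/2}\bigl(\int|\dot u|^q|\nabla\dot u|^2\bigr)^{1/2}$, which by AM--GM can be absorbed into the two diagonal dissipations exactly when $q^2(\mu+\lambda)<4\mu$---precisely the hypothesis \eqref{3VDD-full-E2.64}---leaving a strictly positive residual $c(\mu,\lambda,q)\int\sigma^{p_1}|\dot u|^q|\nabla\dot u|^2 dx$ that accounts for the second term in \eqref{3VDD-full-E2.66}.

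The remaining contributions---the $G_1$, $\Psi$, and commutator pieces $(\mathrm{div}\,u)\rho(\dot u-f)$, $(\nabla u)\nabla F$, $\mu(\nabla u)\nabla w$, the cross products of $|\dot u|^{q-2}\dot u\otimes\dot u\cdot\nabla\dot u$ with $G_1$ and $\Psi$, and the force term $\rho\dot f\cdot|\dot u|^q\dot u$ with $\dot f = f_t + u\cdot\nabla f$---are all of lower order. I would estimate each by H\"older and Young using: the $L^p$ bounds \eqref{3VDD-full-E2.16-1}--\eqref{3VDD-full-E2.16-3} on $F$, $w$, $\nabla F$, $\nabla w$; the interpolation \eqref{3VDD-full-E2.64-0} for $\|\dot u\|_{L^{2+q}}$; the $L^\infty_tL^2_x$ controls of $\nabla u$ and $\sqrt\rho\dot u$ from Propositions \ref{3VDD-full-P2.5}--\ref{3VDD-full-P2.6}; and the time integral \eqref{3VDD-full-E2.65}. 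The force terms fit inside $M_q$ through the explicit pieces $\sigma^{p_1}\|f_t\|_{L^{2+q}}^{2+q}$ and $\sigma^{p_1}\|\nabla f\|_{L^{p_2}}^{p_2}$ in \eqref{3VDD-full-E1.10}. A Gronwall argument then closes \eqref{3VDD-full-E2.66}. The principal technical obstacle is the tight exponent arithmetic: $p_1$ and $p_2$ (different for $N=2$ and $N=3$) have been engineered so that the interpolation of $\|\dot u\|_{L^{2+q}}$ through \eqref{3VDD-full-E2.64-0} balances the $\sigma$-weights and renders every error term integrable with bound $C(M_q)$, and this matching must be verified case by case, with $N=3$ being the tighter of the two.
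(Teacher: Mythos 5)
Your proposal is correct and follows essentially the same route as the paper's proof (which is only cited, not reproduced: the paper defers to Proposition 2.5 of \cite{Zhang2008}, itself modeled on Hoff's weighted $L^{2+q}$ energy estimate for $\dot{u}$): apply $\partial_t+\mathrm{div}(u\,\cdot)$ to the decomposition (\ref{3VDD-full-E1.19}), test against $\sigma^{p_1}|\dot{u}|^q\dot{u}$, and use the hypothesis $q^2(\mu+\lambda)<4\mu$ exactly where you place it, to absorb the indefinite $\mathrm{div}\dot{u}$--$\nabla|\dot{u}|^2$ cross term into the Lam\'e dissipation. The identification of where (\ref{3VDD-full-E2.64}) enters and the role of (\ref{3VDD-full-E2.65}) in handling the $\sigma^{p_1-1}$ weight term are both accurate.
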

\begin{proof}
In \cite{Zhang2008} (Proposition 2.5), we obtain this proposition in
$\mathbb{R}^2$. Using the similar argument as that in
\cite{Zhang2008} (Proposition 2.5), we can easily obtain
 this proposition in
$\mathbb{R}^3$ and omit the details.
\end{proof}

\begin{prop}\label{3VDD-full-P2.8}
 If $f\in L^\infty_tL^{2+q}_x$, $(\rho,u)$
  is a smooth solution of (\ref{3VDD-full-E1.1})--(\ref{3VDD-full-E1.2}) as
  in Proposition \ref{3VDD-full-P2.7}, then we have
    \begin{equation}
      \|F\|_{L^\infty}+\|w\|_{L^\infty}\leq C(\|\nabla u\|_{L^2}+\|\rho-\tilde{\rho}\|_{L^2})^{\frac{2(2+q-N)}{4+2q+Nq}}
        (\|\rho\dot{u}\|_{L^{2+q}}+\|f\|_{L^{2+q}})^{\frac{2N+Nq}{4+2q+Nq}}\label{3VDD-full-E2.72-1}
    \end{equation}
  and
    \begin{equation}
     \int^{T\wedge T^*}_0(\|F\|_{L^\infty}+\|w\|_{L^\infty})ds
        \leq C(M_q)(C_0+C_f)^{\frac{\theta(2+q-N)}{4+2q+Nq}}(1+T).\label{3VDD-full-E2.72}
    \end{equation}
\end{prop}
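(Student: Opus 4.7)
My plan is to derive the pointwise bound (\ref{3VDD-full-E2.72-1}) from a Gagliardo--Nirenberg interpolation applied to $F$ and $w$, using the elliptic estimates for $\nabla F$ and $\nabla w$ in (\ref{3VDD-full-E2.16-2}), and then to integrate in time, feeding in the decay of $\sigma\|\nabla u\|_{L^2}^2$ from Proposition \ref{3VDD-full-P2.2} and the weighted $L^{2+q}$ control of $\rho\dot u$ from (\ref{3VDD-full-E2.65}).

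For (\ref{3VDD-full-E2.72-1}), the restrictions on $q$ (namely $q\in(0,2)$ for $N=2$ and $q\in(1,\tfrac43)$ for $N=3$) are precisely what is needed to have $2+q>N$, so a scale-invariant Gagliardo--Nirenberg inequality gives, for a field $g$ vanishing at infinity,
\begin{equation*}
\|g\|_{L^\infty}\leq C\,\|g\|_{L^{2}}^{1-\beta}\,\|\nabla g\|_{L^{2+q}}^{\beta},\qquad \beta=\frac{2N+Nq}{4+2q+Nq},\quad 1-\beta=\frac{2(2+q-N)}{4+2q+Nq}.
\end{equation*}
Applying this to $g=F$, the definition $F=(\lambda+2\mu)\mathrm{div}\,u-(P(\rho)-P(\tilde{\rho}))$ together with (\ref{3VDD-full-E2.1-1}) yields $\|F\|_{L^2}\leq C(\|\nabla u\|_{L^2}+\|\rho-\tilde\rho\|_{L^2})$; applying it to $g=w$ gives directly $\|w\|_{L^2}\leq C\|\nabla u\|_{L^2}$. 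In either case (\ref{3VDD-full-E2.16-2}) supplies $\|\nabla g\|_{L^{2+q}}\leq C(\|\rho\dot u\|_{L^{2+q}}+\|f\|_{L^{2+q}})$. Plugging in and combining produces (\ref{3VDD-full-E2.72-1}).

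For (\ref{3VDD-full-E2.72}), I integrate (\ref{3VDD-full-E2.72-1}) in $t$. Proposition \ref{3VDD-full-P2.2} gives $\|\nabla u\|_{L^2}\leq\sigma^{-1/2}(C_0+C_f)^{\theta/2}$, and the energy bound in Proposition \ref{3VDD-full-P2.1} together with $|\rho-\tilde\rho|^2\leq C\,G(\rho)$ gives $\|\rho-\tilde\rho\|_{L^2}\leq C(C_0+C_f)^{1/2}\leq C(C_0+C_f)^{\theta/2}$ since $\varepsilon\leq1$ and $\theta\in(0,1)$. Hence the first factor in (\ref{3VDD-full-E2.72-1}) is pointwise in $t$ bounded by $C\sigma^{-(1-\beta)/2}(C_0+C_f)^{\theta(1-\beta)/2}$. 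Pulling the $(C_0+C_f)^{\theta(1-\beta)/2}$ out of the time integral and applying H\"older with conjugate pair $\bigl(\tfrac{2+q}{\beta},\tfrac{2+q}{2+q-\beta}\bigr)$, choosing the $\sigma$-pivot so that $G_2^{2+q}$ gets weight $\sigma^{p_1-1}$, leaves an expression dominated by
\begin{equation*}
\Bigl(\int_0^{T\wedge T^*}\!\!\sigma^{p_1-1}(\|\rho\dot u\|_{L^{2+q}}^{2+q}+\|f\|_{L^{2+q}}^{2+q})\,dt\Bigr)^{\!\beta/(2+q)}\Bigl(\int_0^{T\wedge T^*}\!\!\sigma^{b}\,dt\Bigr)^{\!(2+q-\beta)/(2+q)}
\end{equation*}
for a definite $b=b(N,q)$. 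The first factor is $\leq C(M_q)(1+T)$ by (\ref{3VDD-full-E2.65}) for the $\rho\dot u$ piece and by $\sup_t\|f\|_{L^{2+q}}\leq C(M_q)$ combined with $\int_0^T\sigma^{p_1-1}dt\leq C(1+T)$ for the $f$ piece; a short calculation with the explicit $p_1$ shows $b>-1$, so the second factor is $\leq C(q)(1+T)^{(2+q-\beta)/(2+q)}$. Assembling gives (\ref{3VDD-full-E2.72}).

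The main obstacle is the H\"older bookkeeping: I have to choose the $\sigma$-weight so that the $G_2^{2+q}$-integral picks up exactly the factor $\sigma^{p_1-1}$ controlled by (\ref{3VDD-full-E2.65}), while ensuring that the residual negative power of $\sigma$ arising from $\sigma^{-(1-\beta)/2}$ together with the pivot weight stays strictly above $-1$. This integrability requirement, combined with the condition $2+q>N$ needed for the Gagliardo--Nirenberg $L^\infty$-step, is exactly what pins down the lower bounds on $q$ in each dimension.
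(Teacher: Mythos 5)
Your proof is correct and follows essentially the same route as the paper: the pointwise bound (\ref{3VDD-full-E2.72-1}) is obtained exactly as in the text via the Gagliardo--Nirenberg inequality (valid since $2+q>N$) combined with $\|F\|_{L^2}+\|w\|_{L^2}\leq C(\|\nabla u\|_{L^2}+\|\rho-\tilde\rho\|_{L^2})$ and the elliptic estimate (\ref{3VDD-full-E2.16-2}). The only (harmless) difference is in the time integration: the paper substitutes the pointwise-in-time bound $\|\rho\dot u\|_{L^{2+q}}\leq C(M_q)\sigma^{-p_1/(2+q)}$ coming from (\ref{3VDD-full-E2.66}) and integrates the resulting power of $\sigma$ directly, whereas you run a H\"older argument in $t$ against the time-integrated bound (\ref{3VDD-full-E2.65}); your exponent check ($b>-1$) does hold for the stated ranges of $q$ in both dimensions, so both versions deliver (\ref{3VDD-full-E2.72}).
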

\begin{proof}
  From (\ref{3VDD-full-E2.1}), (\ref{3VDD-full-E2.16-2}),
  (\ref{3VDD-full-E2.22}),
   (\ref{3VDD-full-E2.66}) and  the Galiardo-Nirenberg inequality, we have
    \begin{eqnarray*}
        &&    \|F\|_{L^\infty}\\
        &\leq& C\|F\|_{L^2}^{\frac{2(2+q-N)}{4+2q+Nq}}
        \|\nabla F\|_{L^{2+q}}^{\frac{2N+Nq}{4+2q+Nq}}\\
        &\leq& C(\|\nabla u\|_{L^2}+\|\rho-\tilde{\rho}\|_{L^2})^{\frac{2(2+q-N)}{4+2q+Nq}}
        (\|\rho\dot{u}\|_{L^{2+q}}+\|f\|_{L^{2+q}})^{\frac{2N+Nq}{4+2q+Nq}}.
    \end{eqnarray*}
  and
    \begin{eqnarray*}
        &&   \int^{T\wedge T^*}_0 \|F\|_{L^\infty}ds\\
        &\leq& C(M_q)\int^{T\wedge T^*}_0(\sigma^{-\frac{1}{2}}(C_0+C_f)^\frac{\theta}{2})^{\frac{2(2+q-N)}{4+2q+Nq}}
        (\sigma^{-\frac{p_1}{2+q}})^{\frac{2N+Nq}{4+2q+Nq}}ds\\
        &\leq& \leq C(M_q)(C_0+C_f)^{\frac{\theta(2+q-N)}{4+2q+Nq}}(1+T).
    \end{eqnarray*}
Similarly, we can obtain the same estimates for $w$.
\end{proof}

 Then, we derive a priori pointwise bounds for the density
$\rho$.

\begin{prop}\label{3VDD-full-P2.3}
   Given numbers $0<\underline{\rho}_1<\tilde{\rho}
  <\bar{\rho}_1<\bar{\rho}_2<\bar{\rho}$, there is an
  $\varepsilon>0$ such that, if $(\rho,u)$
  is a smooth solution of (\ref{3VDD-full-E1.1})--(\ref{3VDD-full-E1.2}) with
  $C_0+C_f\leq\varepsilon$ and
  $\underline{\rho}_1\leq\rho_0\leq\bar{\rho}_1$,
   then
        \begin{equation}
          \frac{1}{2}\underline{\rho}_1\leq\rho\leq\bar{\rho}_2,
          \ (x,t)\in\mathbb{R}^N\times\{[0,T]\cap[0,T^*)\},\label{3VDD-full-E2.37-0}
        \end{equation}
   for any $T>0$.
   Furthermore, Claim 1 and
  the estimates in Propositions \ref{3VDD-full-P2.1}--\ref{3VDD-full-P2.8}  hold for any $T>0$.
\end{prop}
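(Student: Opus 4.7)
The plan is to establish the pointwise density bounds (\ref{3VDD-full-E2.37-0}) by a bootstrap/continuation argument driven by the transport identity (\ref{3VDD-full-E1.18}) along particle trajectories, and then to deduce the remaining estimates by invoking Claim 1 together with Propositions \ref{3VDD-full-P2.1}--\ref{3VDD-full-P2.8}. Since $\underline{\rho}_1\le\rho_0\le\bar{\rho}_1<\bar{\rho}_2$ and the solution is classical, the hypotheses (\ref{3VDD-full-E2.1-1})--(\ref{3VDD-full-E2.1-2}) are satisfied on some initial subinterval $[0,T^{**})\subset[0,T^{*})$. I would fix any $T<T^{**}$, derive on $[0,T]$ the strict bound $\underline{\rho}_1/2<\rho<\bar{\rho}_2$, and use this together with Claim 1 to show $T^{**}=T^{*}$.

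The mechanism is as follows: along a trajectory $x(t)$ with $\dot x(t)=u(x(t),t)$, the quantity $y(t):=\Lambda(\rho(x(t),t))$ obeys
$$
\dot y(t)+\bigl(P(\rho(x(t),t))-P(\tilde{\rho})\bigr)=-F(x(t),t).
$$
By (\ref{3VDD-full-E1.5}) the pressure term acts as a damping: it has the same sign as $\rho-\tilde{\rho}$ and is bounded below in absolute value by a constant $c_0>0$ whenever $\rho\in[\bar{\rho}_1,\bar{\rho}]\cup[0,\underline{\rho}_1]$. Meanwhile Proposition \ref{3VDD-full-P2.8} supplies the global-in-time smallness
$\int_0^T\|F\|_{L^\infty}\,ds\le C(M_q)(C_0+C_f)^{\theta'}(1+T)$ where $\theta'=\theta(2+q-N)/(4+2q+Nq)$.

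For the upper bound I argue by contradiction: if $\rho(x_0,t_0)>\bar{\rho}_2$, then since $\rho_0\le\bar{\rho}_1$ there exists a last time $t_1\in[0,t_0)$ along the backward trajectory at which $\rho(x(t_1),t_1)=\bar{\rho}_1$. On $[t_1,t_0]$ one has $P(\rho)-P(\tilde{\rho})\ge c_0>0$, and integrating the ODE for $y$ gives
$$
\Lambda(\bar{\rho}_2)-\Lambda(\bar{\rho}_1)\le -c_0(t_0-t_1)+\int_{t_1}^{t_0}|F(x(s),s)|\,ds.
$$
Combining with Proposition \ref{3VDD-full-P2.8}, the coefficient of $(t_0-t_1)$ becomes nonpositive once $C(M_q)(C_0+C_f)^{\theta'}\le c_0/2$, so the right-hand side is dominated by $C(M_q)(C_0+C_f)^{\theta'}$, contradicting the $\varepsilon$-independent positivity of $\Lambda(\bar{\rho}_2)-\Lambda(\bar{\rho}_1)$ for $\varepsilon$ small enough. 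The lower bound $\rho\ge\underline{\rho}_1/2$ is obtained by the symmetric argument, using the negative sign of $P-P(\tilde{\rho})$ on $\{\rho<\tilde{\rho}\}$ and the positivity of $\Lambda(\underline{\rho}_1)-\Lambda(\underline{\rho}_1/2)$.

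The main obstacle is that the estimate from Proposition \ref{3VDD-full-P2.8} is only small in $\varepsilon=C_0+C_f$ but grows linearly in $T$, so it cannot by itself control the density uniformly on long time intervals. The resolution is precisely the damping on the characteristic: any excursion of $\rho$ above $\bar{\rho}_1$ or below $\underline{\rho}_1$ must have duration of order $(C_0+C_f)^{\theta'}$, and over such a short excursion the $F$-integral is also small and cannot overcome the $\varepsilon$-independent $\Lambda$-gap. Once the strict bounds (\ref{3VDD-full-E2.37-0}) are in hand on $[0,T^{**})$, Claim 1 upgrades the a priori assumption (\ref{3VDD-full-E2.1-2}) to its half, so a standard open--closed continuation argument (using the local $H^3$ existence to extend past any would-be $T^{**}<T^{*}$) forces $T^{**}=T^{*}$. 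Hence all of Propositions \ref{3VDD-full-P2.1}--\ref{3VDD-full-P2.8} apply on every $[0,T]$.
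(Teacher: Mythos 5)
Your proposal is correct and follows essentially the same strategy as the paper: both arguments rest on the transport identity for $\Lambda(\rho)$ along particle paths, the sign condition $(\rho-\tilde{\rho})[P(\rho)-P(\tilde{\rho})]>0$, the smallness of $F$ supplied by Proposition \ref{3VDD-full-P2.8}, and a continuation argument to close Claim 1 and hence validate Propositions \ref{3VDD-full-P2.1}--\ref{3VDD-full-P2.8} on every $[0,T]$. The only substantive difference is in the barrier step: the paper splits time into a short initial interval (handled with the integrated bound (\ref{3VDD-full-E2.72}) and the crude bound $|P-P(\tilde{\rho})|\le C$) and the regime $t\ge\tau_1$ (handled with the pointwise bound (\ref{3VDD-full-E2.72-1}) and a maximum-principle argument at the first time $\Lambda(\rho)$ touches $\Lambda(\bar{\rho}_2)$), whereas you integrate over the whole excursion from the last crossing of $\bar{\rho}_1$ and let the pressure damping absorb the linear-in-time growth of $\int\|F\|_{L^\infty}\,dt$; for that absorption to work you should state explicitly that the proof of (\ref{3VDD-full-E2.72}) in fact gives the localized form $\int_{t_1}^{t_0}\|F\|_{L^\infty}\,ds\le C(M_q)(C_0+C_f)^{\frac{\theta(2+q-N)}{4+2q+Nq}}\left(1+(t_0-t_1)\right)$ (the $T$-growth comes only from the constant-in-time tail for $t\ge1$, the singular part near $t=0$ being integrable), since the estimate as literally stated, with $1+t_0$ in place of $1+(t_0-t_1)$, would not suffice.
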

\begin{proof}
At first, we prove that if (\ref{3VDD-full-E2.1-1}) and
(\ref{3VDD-full-E2.1-2}) hold, then estimate
(\ref{3VDD-full-E2.37-0}) holds.

  We fix a curve $x(t)$ satisfying $\dot{x}= u(x(t),t)$ and $x(0)=x$. From
  (\ref{3VDD-full-E1.18}), we have
    \begin{equation}
 \frac{d}{dt}\Lambda(\rho(x(t),t))+P(\rho(x(t),t))-P(\tilde{\rho})=-F
 (x(t),t),\label{3VDD-full-E2.36}
    \end{equation}
where $\Lambda$ satisfies that $\Lambda(\tilde{\rho})=0$ and
$\Lambda'(\rho)=\frac{2\mu+\lambda(\rho)}{\rho}$.

\textbf{(I)} For the small time, we estimate the pointwise bounds of
the density as follows. From (\ref{3VDD-full-E2.1-1}) and
(\ref{3VDD-full-E2.72}), we have, for all $t\in[0,1]$,
    $$
    \left|\Lambda(\rho(x(t),t))-\Lambda(\rho_0(x))
    \right|
        \leq C(M_q)(C_0+C_f)^{\frac{\theta(2+q-N)}{4+2q+Nq}}+Ct.
    $$
When
    \begin{equation}
    2C(M_q)\varepsilon^{\frac{\theta(2+q-N)}{4+2q+Nq}}\leq
        \Lambda(\bar{\rho}_1+\frac{1}{3}(\bar{\rho}_2-\bar{\rho}_1))-\Lambda(\bar{\rho}_1),
    \end{equation}
we get
    $$
      \Lambda(\rho(x(t),t))\leq \Lambda(\bar{\rho}_1+\frac{1}{3}(\bar{\rho}_2-\bar{\rho}_1)),
      \ \ t\in [0,\tau],
    $$
and
    \begin{equation}
          \rho\leq\bar{\rho}_1+\frac{1}{3}(\bar{\rho}_2-\bar{\rho}_1),
          \ (x,t)\in\mathbb{R}^N\times[0,\tau],
        \end{equation}
where
$\tau=\min\{1,\frac{1}{2C}[\Lambda(\bar{\rho}_1+\frac{1}{3}(\bar{\rho}_2-\bar{\rho}_1))
-\Lambda(\bar{\rho}_1)]\}$. Similarly, since
    $$
\Lambda(\underline{\rho}_1)-\Lambda(\frac{5}{6}\underline{\rho}_1)\geq\int^{\underline{\rho}_1}_{\frac{5}{6}\underline{\rho}_1}
\frac{2\mu}{s}ds=2\mu\ln\frac{6}{5},
    $$
then, if
     \begin{equation}
    2C(M_q)\varepsilon^{\frac{\theta(2+q-N)}{4+2q+Nq}}
    \leq 2\mu\ln\frac{6}{5}\leq \Lambda(\underline{\rho}_1)-\Lambda(
        \frac{5}{6}\underline{\rho}_1),
    \end{equation}
we get
       \begin{equation}
          \rho\geq\frac{5}{6}\underline{\rho}_1,
          \ (x,t)\in\mathbb{R}^N\times[0,\tau_1],
        \end{equation}
where $\tau_1=\min\{\tau, \frac{\mu}{C}\ln\frac{6}{5}\}$.

\textbf{(II)} For the large time $t\geq\tau_1$, we estimate the
pointwise bounds of density as follows. From (\ref{3VDD-full-E2.1}),
(\ref{3VDD-full-E2.22}), (\ref{3VDD-full-E2.66}),
 (\ref{3VDD-full-E2.72-1}) and (\ref{3VDD-full-E2.36}), we have
        \begin{equation}
      \frac{d \Lambda(\rho(x(t),t))}{dt}+P(\rho(x(t),t))-P(\tilde{\rho})=O_5(t),\label{3VDD-full-E2.55}
    \end{equation}
where
    $$
    |O_5(t)|\leq  C(\tau_1,M_q)(C_0+C_f)^{\frac{\theta(2+q-N)}{4+2q+Nq}},
    \ t\geq\tau_1.
    $$
Now, we apply a standard maximum principle argument to estimate
the upper bounds of density. Let
    $$
    t_0=\sup\{t\in(\tau,T]\cap (\tau, T^*)|\Lambda(\rho(x(s),s))\leq\Lambda(
    \bar{\rho}_2),\
    \textrm{ for all }s\in[0,t]
    \}.
    $$
If $t_0<T$ and $t_0<T^*$, we have
    $$
    \Lambda(\rho(x(t_0),t_0))=\Lambda(\bar{\rho}_2),
    $$
     $$
   \left. \frac{d\Lambda(\rho(x(t),t))}{dt}\right|_{t=t_0}\geq0,
    $$
and
    $$
    \rho(x(t_0),t_0)=\bar{\rho}_2.
    $$
From (\ref{3VDD-full-E2.55}), we have
    $$
    O_5(t_0)\geq P(\bar{\rho}_2)-P(\tilde{\rho}).
    $$
On the other hand, when
    \begin{equation}
    C(\tau_1,M_q)        \varepsilon^{\frac{\theta(2+q-N)}{4+2q+Nq}}<
        P(\bar{\rho}_2)-P(\tilde{\rho}),
    \end{equation}
we have
     $$
    O_5(t_0)< P(\bar{\rho}_2)-P(\tilde{\rho}).
    $$
It is a contradiction. Thus, we have
        \begin{equation}
          \rho\leq\bar{\rho}_2,
          \ (x,t)\in\mathbb{R}^N\times\{[0,T]\cap [0,T^*)\}.
        \end{equation}
Similarly,  let
    $$
    t_1=\sup\{t\in(\tau,T]\cap (\tau,
    T^*)|\Lambda(\rho(x(s),s))\geq\Lambda(\frac{1}{2}
    \underline{\rho}_1),\
    \textrm{ for all }s\in[0,t]
    \}.
    $$
If $t_1<T$ and $t_1<T^*$, we have
    $$
    \Lambda(\rho(x(t_1),t_1))=\Lambda(\frac{1}{2}\underline{\rho}_1),
    $$
     $$
   \left. \frac{d\Lambda(\rho(x(t),t))}{dt}\right|_{t=t_1}\leq0,
    $$
and
    $$
    \rho(x(t_1),t_1)=\frac{1}{2}\underline{\rho}_1.
    $$
From (\ref{3VDD-full-E2.55}), we have
    $$
    O_5(t_1)\leq P(\frac{1}{2}\underline{\rho}_1)-P(\tilde{\rho})
    \leq \max_{s\in[0,\frac{1}{2}\tilde{\rho}]}P(s)-P(\tilde{\rho}).
    $$
On the other hand, when
    \begin{equation*}
    C(\tau_1,M_q)
    \varepsilon^{\frac{\theta(2+q-N)}{4+2q+Nq}}<P(\tilde{\rho})-
        \max_{s\in[0,\frac{1}{2}\tilde{\rho}]}P(s),
    \end{equation*}
we have
     $$
    O_5(t_1)> \max_{s\in[0,\frac{1}{2}\tilde{\rho}]}P(s)-P(\tilde{\rho}).
    $$
It is a contradiction. Thus, we have
        \begin{equation*}
          \rho\geq\frac{1}{2}\underline{\rho}_1,
          \ (x,t)\in\mathbb{R}^N\times\{[0,T]\cap [0,T^*)\}.
        \end{equation*}
Using the classical continuation method, (\ref{3VDD-full-E2.22}) and
(\ref{3VDD-full-E2.37-0}), we can finish the proof of this
proposition.
\end{proof}

 From now on, the constant $K$ ($K(T)$) will depend on
$\underline{\rho}_1$ (and $T$).

\begin{lem}
 For any $T>0$, we have
    \begin{equation}
     \sup_{t\in[0,T]\cap[0,T^*)} \|\nabla\rho(\cdot,t)\|_{L^{2+q}}+\int^{T\wedge T^*}_0\left(\|\nabla u\|_{L^\infty}
     +\|\Delta u\|_{L^{2+q}}
     \right)dt\leq K(T), \label{3VDD-full-E2.62-0}
    \end{equation}
    \begin{equation}
     \sup_{t\in[0,T]\cap[0,T^*)}
      \|\nabla\rho(\cdot,t)\|_{L^{2}}\leq K(T). \label{3VDD-full-E2.62-2}
    \end{equation}
\end{lem}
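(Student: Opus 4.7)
The plan is to differentiate the mass equation in $x$, obtain a transport-type $L^{p}$ estimate for $\nabla\rho$, and combine it with elliptic control of $\nabla^{2}u$ through $F$ and $w$; the coupling is closed by a logarithmic Gronwall inequality. Differentiating $\rho_{t}+\mathrm{div}(\rho u)=0$ yields
\begin{equation*}
(\nabla\rho)_{t}+(u\cdot\nabla)\nabla\rho
=-(\nabla u)^{T}\nabla\rho-(\mathrm{div}\,u)\nabla\rho-\rho\,\nabla\mathrm{div}\,u,
\end{equation*}
and testing against $|\nabla\rho|^{p-2}\nabla\rho$ for $p=2+q$ (and later $p=2$) gives
\begin{equation*}
\tfrac{d}{dt}\|\nabla\rho\|_{L^{p}}
\leq C\|\nabla u\|_{L^{\infty}}\|\nabla\rho\|_{L^{p}}+C\|\nabla\mathrm{div}\,u\|_{L^{p}}.
\end{equation*}
Since $\mathrm{div}\,u=(F+P-P(\tilde\rho))/(\lambda(\rho)+2\mu)$ and Proposition \ref{3VDD-full-P2.3} ensures $\rho\in[\tfrac12\underline\rho_{1},\bar\rho_{2}]$, the chain rule gives
\begin{equation*}
\|\nabla\mathrm{div}\,u\|_{L^{p}}\leq C\|\nabla F\|_{L^{p}}+C(1+\|F\|_{L^{\infty}})\|\nabla\rho\|_{L^{p}}.
\end{equation*}

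The source $\|\nabla F\|_{L^{2+q}}+\|\nabla w\|_{L^{2+q}}$ is controlled through (\ref{3VDD-full-E2.16-2}) by $C(\|\rho\dot u\|_{L^{2+q}}+\|f\|_{L^{2+q}})$. Proposition \ref{3VDD-full-P2.7} gives $\sigma^{p_{1}}\|\rho\dot u\|_{L^{2+q}}^{2+q}\leq C(M_{q})$, hence $\|\rho\dot u\|_{L^{2+q}}\leq C(M_{q})\sigma^{-p_{1}/(2+q)}$, and one checks that $p_{1}/(2+q)<1$ in both dimensions (for $N=2$ it equals $(4+q)/(2(2+q))\in(\tfrac34,1)$; for $N=3$ it lies in $(\tfrac34,\tfrac45)$), so this exponent is integrable in time and
\begin{equation*}
\int_{0}^{T\wedge T^{*}}(\|\nabla F\|_{L^{2+q}}+\|\nabla w\|_{L^{2+q}})\,dt\leq K(T).
\end{equation*}

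For $\|\nabla u\|_{L^{\infty}}$ I would invoke a Beale--Kato--Majda type inequality,
\begin{equation*}
\|\nabla u\|_{L^{\infty}}\leq C\bigl(1+\|\mathrm{div}\,u\|_{L^{\infty}}+\|w\|_{L^{\infty}}\bigr)\log\bigl(e+\|\nabla^{2}u\|_{L^{2+q}}\bigr)+C\|\nabla u\|_{L^{2}},
\end{equation*}
where the prefactor is $L^{1}_{t}$ by Proposition \ref{3VDD-full-P2.8} (using $\|\mathrm{div}\,u\|_{L^{\infty}}\leq C(1+\|F\|_{L^{\infty}})$), and the decomposition (\ref{3VDD-full-E2.24}) together with the bound on $\nabla\mathrm{div}\,u$ above gives
\begin{equation*}
\|\nabla^{2}u\|_{L^{2+q}}\leq C\|\Delta u\|_{L^{2+q}}\leq C(\|\nabla F\|_{L^{2+q}}+\|\nabla w\|_{L^{2+q}})+C(1+\|F\|_{L^{\infty}})\|\nabla\rho\|_{L^{2+q}}.
\end{equation*}
Inserting everything into the transport estimate, the scalar $g(t):=\|\nabla\rho(\cdot,t)\|_{L^{2+q}}$ satisfies
\begin{equation*}
g'(t)\leq \alpha(t)\bigl(1+\log(e+g(t))\bigr)g(t)+\beta(t),\qquad \alpha,\beta\in L^{1}(0,T),
\end{equation*}
and the logarithmic Gronwall lemma forces $g(t)\leq K(T)$. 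Feeding this back produces the desired $L^{1}_{t}$ bounds on $\|\nabla u\|_{L^{\infty}}$ and $\|\Delta u\|_{L^{2+q}}$; the $L^{2}$ bound (\ref{3VDD-full-E2.62-2}) follows by rerunning the same argument with $p=2$, where $\|\nabla u\|_{L^{\infty}}$ is now known to be in $L^{1}(0,T)$ and a straightforward Gronwall suffices.

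The main obstacle is the closure of the log-Gronwall step, which depends crucially on the $L^{1}_{t}$-integrability of $\|F\|_{L^{\infty}}+\|w\|_{L^{\infty}}$ (Proposition \ref{3VDD-full-P2.8}) and of $\|\rho\dot u\|_{L^{2+q}}$, the latter being possible only because the $\sigma$-weight in Proposition \ref{3VDD-full-P2.7} is sharp enough to render the singularity at $t=0$ integrable. A naive estimate bounding $\|\nabla u\|_{L^{\infty}}$ by $\|\nabla^{2}u\|_{L^{2+q}}$ through Sobolev embedding alone would produce a Riccati nonlinearity $g^{2}$ with merely $L^{1}_{t}$ coefficient, which cannot be closed; the logarithm in the BKM inequality is essential.
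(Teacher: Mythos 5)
Your proposal is correct and follows essentially the same route as the paper: an $L^{p}$ transport estimate for the density gradient, control of $\Delta u$ through $\nabla F$, $\nabla w$ and $\nabla\rho$ via the decomposition (\ref{3VDD-full-E2.24}), a logarithmic ($\dot B^{0}_{\infty,\infty}$/BKM-type) inequality for $\|\nabla u\|_{L^{\infty}}$, and a log-Gronwall closure resting on the $L^{1}_{t}$-integrability of $\|F\|_{L^{\infty}}+\|w\|_{L^{\infty}}$ and of $\|\nabla F\|_{L^{2+q}}\lesssim\sigma^{-p_{1}/(2+q)}$. The only cosmetic difference is that the paper transports $\nabla\Lambda(\rho)$ (so that $\mathrm{div}\,u$ is absorbed into $F$ at the level of the equation) rather than $\nabla\rho$ itself, which is equivalent here since the density is bounded away from zero by Proposition \ref{3VDD-full-P2.3}.
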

\begin{proof}
  From
  (\ref{3VDD-full-E1.18}), we have
    \begin{equation}
 \partial_t\Lambda(\rho(x,t))+u\cdot\nabla \Lambda+P(\rho(x,t))-P(\tilde{\rho})=-F
 (x,t),\label{3VDD-full-E2.63}
    \end{equation}
where $\Lambda$ satisfies that $\Lambda(\tilde{\rho})=0$ and
$\Lambda'(\rho)=\frac{2\mu+\lambda(\rho)}{\rho}$. By the simple
computation, we have
    $$
    \|\nabla \Lambda(t)\|_{L^{2+q}}^{2+q}\leq
    \|\nabla \Lambda(0)\|_{L^{2+q}}^{2+q}+K\int^t_0\left(\|\nabla
    \Lambda\|_{L^{2+q}}^{2+q}+\|\nabla F\|_{L^{2+q}}\|\nabla
    \Lambda\|_{L^{2+q}}^{1+q}+\|\nabla u\|_{L^\infty}\|\nabla
    \Lambda\|_{L^{2+q}}^{2+q}\right)ds.
    $$
Using the Fourier analysis methods, one can obtain the following
estimate.
    \begin{equation}
      \|\nabla u\|_{L^\infty}\leq C\|u\|_{L^2}+C(\|\nabla
      u\|_{\dot{B}^0_{\infty,\infty}}+1)\log\left(
      e+\|\Delta u\|_{L^{2+q}}
      \right).
    \end{equation}
(For the convenience of reader's reading, we also give the proof in
Appendix (\ref{2VDD-full-E3.1}).) From (\ref{3VDD-full-E2.24}), we
have
    $$
    \|\Delta u\|_{L^{2+q}}\leq K(\|\nabla F\|_{L^{2+q}}+\|F\|_{L^\infty}\|\nabla \Lambda\|_{L^{2+q}}
    +\|\nabla \Lambda\|_{L^{2+q}}+\|\nabla w\|_{L^{2+q}}),
    $$
and
    $$
    \|\nabla u\|_{\dot{B}^0_{\infty,\infty}}\leq
    C(\|F\|_{L^\infty}+\|\rho-\tilde{\rho}\|_{L^\infty}+\|w\|_{L^\infty}).
    $$
Thus, we have
    $$
    \|\nabla \Lambda(t)\|_{L^{2+q}}^{2+q}\leq \|\nabla \Lambda(0)
    \|_{L^{2+q}}^{2+q}+K\int^t_0\|\nabla
    F\|_{L^{2+q}}\|\nabla \Lambda\|_{L^{2+q}}^{1+q}ds
        +K\int^t_0\mathcal{A}\log\left(\|\nabla \Lambda\|_{L^{2+q}}+e
    \right)\|\nabla \Lambda\|_{L^{2+q}}^{2+q}ds,
    $$
and
     \begin{eqnarray*}
   \sup_{t\in[0,T]\cap[0,T^*)} \|\nabla \Lambda(t)\|_{L^{2+q}}&\leq&
   \left(K+\|\nabla \Lambda(0)\|_{L^{2+q}}+K\int^T_0\|\nabla
    F\|_{L^{2+q}}ds\right)^{\exp\{
        K\int^T_0\mathcal{A}(s)ds\}},
    \end{eqnarray*}
where
$\mathcal{A}=\left(\|F\|_{L^\infty}+\|w\|_{L^\infty}+1\right)\log\left(\|F\|_{L^\infty}+\|w\|_{L^\infty}+\|\nabla
F\|_{L^{2+q}}+\|\nabla
    w\|_{L^{2+q}}+e\right)$. From (\ref{3VDD-full-E2.1}), (\ref{3VDD-full-E2.16-2}),
    (\ref{3VDD-full-E2.59}), (\ref{3VDD-full-E2.62}),
    (\ref{3VDD-full-E2.66}), (\ref{3VDD-full-E2.72-1}) and (\ref{3VDD-full-E2.37-0}), we can immediately
    obtain
    (\ref{3VDD-full-E2.62-0}). Similarly, we can obtain
    (\ref{3VDD-full-E2.62-2}).
\end{proof}

\begin{lem}\label{3VDD-full-P2.9}
 If $\rho_0-\tilde{\rho}\in H^1$ and $u_0\in H^2$,  then for any $T>0$, we have
    \begin{equation}
      \sup_{t\in[0,T]\cap[0,T^*)}\int\rho|\dot{u}|^2dx
      +\int^{T\wedge T^*}_0\int\left(|\nabla
      \dot{u}|^2+|\frac{D}{Dt}\mathrm{div}u|^2
      \right)dxdt
        \leq K,\label{3VDD-full-E2.65-10}
    \end{equation}
    \begin{equation}
      \sup_{t\in[0,T]\cap[0,T^*)}\left(\|u\|_{L^\infty}+\|u\|_{H^2}+\|\nabla F\|_{L^2}+\|\nabla w\|_{L^2}
      \right)
        \leq K(T).\label{3VDD-full-E2.65-11}
    \end{equation}
\end{lem}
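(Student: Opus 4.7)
The plan is to prove (\ref{3VDD-full-E2.65-10}) first by removing the weight $\sigma^N$ from the energy estimate (\ref{3VDD-full-E2.5}) of Lemma \ref{3VDD-full-L2.1}, and then to derive (\ref{3VDD-full-E2.65-11}) as a consequence via elliptic regularity. Applying $\partial_t+\mathrm{div}(u\,\cdot)$ to the momentum equation and pairing the result with $\dot u$ reproduces the same differential inequality that yielded (\ref{3VDD-full-E2.5}):
\[
\frac{d}{dt}\int\rho|\dot u|^2\,dx + \int\Bigl(\mu|\nabla\dot u|^2 + (\mu+\lambda(\rho))\bigl|\tfrac{D}{Dt}\mathrm{div}\,u\bigr|^2\Bigr)dx \leq C\Bigl(\|u\|_{L^4}^4+\|\nabla u\|_{L^4}^4+\|f_t\|_{L^2}^2+\|\nabla f\|_{L^4}^4\Bigr).
\]
To integrate this on $[0,T]$, I need both a finite initial datum and a time-integrable right-hand side on a neighbourhood of $t=0$.

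For the initial value, the hypotheses $u_0\in H^2$ and $\rho_0-\tilde\rho\in H^1$ together with $\rho_0\geq \underline\rho_1$ (Proposition \ref{3VDD-full-P2.3}) and the momentum equation evaluated at $t=0$ give $\dot u_0 = u_t(\cdot,0)+u_0\cdot\nabla u_0 \in L^2(\mathbb{R}^N)$, hence $\int\rho_0|\dot u_0|^2\,dx\leq K$. For the right-hand side, the terms $\|f_t\|_{L^2}^2$ and $\|\nabla f\|_{L^4}^4$ are time-integrable by the definition of $M_q$, while for $\|u\|_{L^4}^4$ and $\|\nabla u\|_{L^4}^4$ I would split $[0,T]=[0,1]\cup[1,T]$: on $[1,T]$ the bound follows from (\ref{3VDD-full-E2.31}) since $\sigma\equiv1$ there, and on $[0,1]$ the uniform $H^1$ control of $u$ supplied by Lemma \ref{3VDD-full-L2.3} and Proposition \ref{3VDD-full-P2.5}, combined with the pointwise bounds on $F,w$ of Proposition \ref{3VDD-full-P2.8} and the decomposition (\ref{3VDD-full-E2.16-1}), provides a time-integrable bound on $\|\nabla u\|_{L^4}^4$, and then on $\|u\|_{L^4}^4$ via (\ref{3VDD-full-E2.16}).

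Once (\ref{3VDD-full-E2.65-10}) is available, the estimate (\ref{3VDD-full-E2.65-11}) follows quickly. The bound $\int\rho|\dot u|^2\,dx\leq K$ together with $\rho\leq\bar\rho$ gives $\|\rho\dot u\|_{L^2}\leq K$, so that (\ref{3VDD-full-E2.16-2}) with $p=2$ immediately yields $\|\nabla F\|_{L^2}+\|\nabla w\|_{L^2}\leq K(T)$. The decomposition (\ref{3VDD-full-E2.24}) then leads to
\[
\|\Delta u\|_{L^2}\leq C\Bigl(\|\nabla F\|_{L^2}+\|\nabla w\|_{L^2}+(1+\|F\|_{L^\infty})\|\nabla\rho\|_{L^2}\Bigr),
\]
with $\|\nabla\rho\|_{L^2}$ controlled by (\ref{3VDD-full-E2.62-2}) and $\|F\|_{L^\infty}$ by (\ref{3VDD-full-E2.72-1}). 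Combined with $\|\nabla u\|_{L^2}\leq K$ from Proposition \ref{3VDD-full-P2.5} and $\|u\|_{L^2}\leq C\underline\rho_1^{-1/2}\|\sqrt\rho u\|_{L^2}\leq K$ (valid since $\rho\geq \underline\rho_1/2$), this gives $\|u\|_{H^2}\leq K(T)$, and the Sobolev embedding $H^2(\mathbb{R}^N)\hookrightarrow L^\infty(\mathbb{R}^N)$ yields $\|u\|_{L^\infty}\leq K(T)$.

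The hard step is the unweighted control of $\int_0^1\|\nabla u\|_{L^4}^4\,dt$: the factor $\sigma^N$ that absorbed the blow-up of $\|\rho\dot u\|_{L^2}$ at $t=0$ in Proposition \ref{3VDD-full-P2.6} is no longer available, and removing it forces one to use simultaneously the local-in-time $H^1$ bound on $u$, the pointwise control of $F,w$, and the finiteness of $\int\rho_0|\dot u_0|^2\,dx$ granted by the extra initial regularity. This is precisely the point at which the hypotheses $u_0\in H^2$ and $\rho_0-\tilde\rho\in H^1$ enter essentially; without them the energy identity for $\dot u$ cannot be closed up to $t=0^+$.
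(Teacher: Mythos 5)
Your overall architecture (unweighted $\dot u$-energy estimate, then elliptic regularity for $F$, $w$ and $\Delta u$) is the same as the paper's, which simply invokes ``the similar argument as in the proof of Proposition \ref{3VDD-full-P2.6}'' followed by (\ref{3VDD-full-E2.16-2}) and (\ref{3VDD-full-E2.24}). However, there is a genuine gap at exactly the step you flag as hard, namely $\int_0^1\|\nabla u\|_{L^4}^4\,dt$ in three dimensions, and your proposed resolution does not close it. Via (\ref{3VDD-full-E2.16-1}) and Gagliardo--Nirenberg one has $\|F\|_{L^4}^4\leq C\|F\|_{L^2}\|\nabla F\|_{L^2}^3\leq C(M_q)\|\sqrt{\rho}\dot u\|_{L^2}^3+\dots$ for $N=3$, and none of the tools you list controls $\int_0^1\|\sqrt{\rho}\dot u\|_{L^2}^3\,dt$ a priori: Proposition \ref{3VDD-full-P2.6} only gives $\|\sqrt{\rho}\dot u\|_{L^2}^2\leq C\sigma^{-1}$, so this route produces a non-integrable $t^{-3/2}$; the pointwise bounds of Proposition \ref{3VDD-full-P2.8} give $\|F\|_{L^\infty}\sim\sigma^{-\beta}$ with $\beta>1/2$ (because $\|\rho\dot u\|_{L^{2+q}}$ blows up like $\sigma^{-p_1/(2+q)}$ by Proposition \ref{3VDD-full-P2.7}), so $\|F\|_{L^\infty}^2\|F\|_{L^2}^2$ is again non-integrable near $t=0$. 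The missing idea --- which is precisely how Proposition \ref{3VDD-full-P2.6} is closed and what the paper means by ``similar argument'' --- is to write $\int_0^1\|\sqrt{\rho}\dot u\|_{L^2}^3\,dt\leq\bigl(\sup_t\|\sqrt{\rho}\dot u\|_{L^2}\bigr)\int_0^1\|\sqrt{\rho}\dot u\|_{L^2}^2\,dt\leq C(M_q)\sup_t\|\sqrt{\rho}\dot u\|_{L^2}$ using Proposition \ref{3VDD-full-P2.5}, and then absorb the resulting sublinear occurrence of the unknown $\sup_t\int\rho|\dot u|^2\,dx$ into the left-hand side by Young's inequality. Without this absorption the estimate is circular. (In two dimensions the exponent on $\|\nabla F\|_{L^2}$ is $2$ and your direct bound does work; the gap is specific to $N=3$.)

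A secondary problem occurs in your derivation of (\ref{3VDD-full-E2.65-11}): you use $\sup_{t\in[0,T]}\|F\|_{L^\infty}$ via (\ref{3VDD-full-E2.72-1}), but that quantity is \emph{not} uniformly bounded down to $t=0$ for the reason just given ($\|\rho\dot u\|_{L^{2+q}}$ carries a negative power of $\sigma$ even after (\ref{3VDD-full-E2.65-10}) is established). The paper avoids this by estimating $\|F\nabla\rho\|_{L^2}\leq\|F\|_{L^2}^{\frac{2+q-N}{2+q}}\|\nabla F\|_{L^2}^{\frac{N}{2+q}}\|\nabla\rho\|_{L^{2+q}}$, which only uses the now-uniform $L^2$ bounds on $F$ and $\nabla F$ together with (\ref{3VDD-full-E2.62-0}); you should replace your $\|F\|_{L^\infty}\|\nabla\rho\|_{L^2}$ term by such an interpolation. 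The remaining pieces of your argument (finiteness of $\int\rho_0|\dot u_0|^2\,dx$ from the initial regularity, the treatment of $[1,T]$ via (\ref{3VDD-full-E2.31}), and the Sobolev embedding for $\|u\|_{L^\infty}$) are consistent with the paper.
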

\begin{proof}
  Using the similar argument as that in the proof of Proposition
\ref{3VDD-full-P2.6}, we can obtain (\ref{3VDD-full-E2.65-10}). From
(\ref{3VDD-full-E2.16-2}), we get
    \begin{equation}
      \sup_{t\in[0,T]\cap[0,T^*)}\left(\|\nabla F\|_{L^2}+\|\nabla w\|_{L^2}
      \right)
        \leq K.\label{3VDD-full-E2.67-0}
    \end{equation}
From (\ref{3VDD-full-E2.1}), (\ref{3VDD-full-E2.24}),
(\ref{3VDD-full-E2.59}), (\ref{3VDD-full-E2.37-0}),
(\ref{3VDD-full-E2.62-0}), (\ref{3VDD-full-E2.62-2}),
(\ref{3VDD-full-E2.67-0}), we have
    \begin{eqnarray*}
    \| u(\cdot,t)\|_{H^{2}}&\leq& K(\| u\|_{L^{2}}+\|\nabla F\|_{L^{2}}+\|F\nabla \rho\|_{L^{2}}
    +\|\nabla \rho\|_{L^{2}}+\|\nabla w\|_{L^{2}})\\
    &\leq& K(\| u\|_{L^{2}}+\|\nabla F\|_{L^{2}}+\|F\|_{L^2}^{\frac{2+q-N}{2+q}}
    \|\nabla F\|_{L^2}^{\frac{N}{2+q}}\|\nabla \rho\|_{L^{2+q}}
    +\|\nabla \rho\|_{L^{2}}+\|\nabla w\|_{L^{2}})\\
        &\leq& K(T),\ \ \ t\in[0,T]\cap [0,T^*).
    \end{eqnarray*}
Then, using Sobolev's embedding theorem, we can finish this proof.
\end{proof}

\begin{lem}\label{3VDD-full-L2.7}
 For any $T>0$, we have
    \begin{equation}
     \sup_{t\in[0,T]\cap[0,T^*)} \|\rho(\cdot,t)-\tilde{\rho}\|_{H^2}\leq K(T). \label{3VDD-full-E2.65-0}
    \end{equation}
\end{lem}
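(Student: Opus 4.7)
The plan is to derive an $L^2$ energy estimate for $\nabla^2\rho$ from the continuity equation and close it by Gr\"onwall's lemma. The essential trick, already used repeatedly in this paper, is to eliminate $\mathrm{div}\,u$ from the continuity equation by the identity $\mathrm{div}\,u=(F+P(\rho)-P(\tilde\rho))/(2\mu+\lambda(\rho))$, which converts it into the scalar transport-with-source equation
\begin{equation*}
\rho_t+u\cdot\nabla\rho+\frac{\rho\bigl(F+P(\rho)-P(\tilde\rho)\bigr)}{2\mu+\lambda(\rho)}=0.
\end{equation*}
In this form the right-hand side carries no derivatives of $u$, so that differentiating twice in $x$ converts the otherwise dangerous $\nabla^3 u$ term into a $\nabla^2 F$ term which we can handle by elliptic regularity.

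Applying $\partial_k\partial_l$ to the above equation, testing against $\partial_k\partial_l\rho$ and integrating by parts the convection term, I obtain schematically
\begin{equation*}
\tfrac{1}{2}\tfrac{d}{dt}\|\nabla^2\rho\|_{L^2}^2\le C\bigl(\|\nabla u\|_{L^\infty}+\|F\|_{L^\infty}+\|w\|_{L^\infty}+1\bigr)\|\nabla^2\rho\|_{L^2}^2+R(t)\|\nabla^2\rho\|_{L^2},
\end{equation*}
where $R(t)$ collects the cross terms $\|\nabla^2 u\|_{L^{2+q}}\|\nabla\rho\|_{L^{2(2+q)/q}}$, $\|\nabla F\|_{L^{2+q}}\|\nabla\rho\|_{L^{2(2+q)/q}}$, $(\|F\|_{L^\infty}+1)\|\nabla\rho\|_{L^4}^2$, and the critical contribution $\|\rho\|_{L^\infty}\|\nabla^2 F\|_{L^2}$ coming from $\nabla^2(\rho F/(2\mu+\lambda))$. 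The coefficient in front of $\|\nabla^2\rho\|_{L^2}^2$ is already integrable in $t$ with bound $K(T)$ thanks to (\ref{3VDD-full-E2.62-0}) and (\ref{3VDD-full-E2.72}).

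To control the critical $\|\nabla^2 F\|_{L^2}$, I would apply $L^2$ Calder\'on--Zygmund on $\mathbb{R}^N$ to $F$ together with $\Delta F=\mathrm{div}(\rho\dot u-\rho f)$, giving
\begin{equation*}
\|\nabla^2 F\|_{L^2}\le C\|\Delta F\|_{L^2}\le C\bigl(\|\nabla\rho\|_{L^{2+q}}\|\dot u\|_{L^{2(2+q)/q}}+\|\rho\|_{L^\infty}\|\nabla\dot u\|_{L^2}+\|\nabla(\rho f)\|_{L^2}\bigr).
\end{equation*}
The $L^{2(2+q)/q}$ norm of $\dot u$ is controlled by (\ref{3VDD-full-E2.64-0}) in terms of $\|\sqrt\rho\dot u\|_{L^2}$ and $\|\nabla\dot u\|_{L^2}$, which by (\ref{3VDD-full-E2.65-10}) are respectively $L^\infty_t$- and $L^2_t$-bounded. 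The $L^r$ norms of $\nabla\rho$ for $r\in(2+q,\infty)$ that appear in $R(t)$ are obtained by Gagliardo--Nirenberg interpolation between $\|\nabla\rho\|_{L^{2+q}}\le K(T)$ and $\|\nabla^2\rho\|_{L^2}$ itself; after Young's inequality this produces an absorbable $\varepsilon\|\nabla^2\rho\|_{L^2}^2$ plus an $L^1_t$ remainder, and Gr\"onwall's lemma then yields (\ref{3VDD-full-E2.65-0}).

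The main obstacle is the bookkeeping of exponents. For $N=3$ the range $q\in(1,4/3)$ from (\ref{3VDD-full-E1.10}) forces $2+q<4$, so $\|\nabla\rho\|_{L^4}$ cannot be interpolated directly between $\|\nabla\rho\|_{L^2}$ and $\|\nabla\rho\|_{L^{2+q}}$, and one is forced to bring $\|\nabla^2\rho\|_{L^2}$ into the Gagliardo--Nirenberg inequality. The Young exponents must then be tuned so that the power of $\|\nabla^2\rho\|_{L^2}$ in the residual is strictly less than $2$ when paired against the available $L^1_t$ norm of $\|\nabla^2 u\|_{L^{2+q}}$ and the $L^2_t$ norm of $\|\nabla\dot u\|_{L^2}$; the specific ranges of $q$ in (\ref{3VDD-full-E1.10}) are precisely what make this possible in both $N=2$ and $N=3$.
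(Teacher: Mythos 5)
Your proposal is correct and follows essentially the same route as the paper: the paper rewrites the continuity equation as the transport equation $\partial_t\Lambda(\rho)+u\cdot\nabla\Lambda(\rho)+P(\rho)-P(\tilde\rho)=-F$ (equation (\ref{3VDD-full-E2.63})), performs the $H^2$ energy estimate on $\Lambda(\rho)$, controls $\|F\|_{H^2}$ via $\Delta F=\mathrm{div}(\rho\dot u-\rho f)$ together with Gagliardo--Nirenberg exactly as you do, and closes with Gronwall using the time-integrability of $\|\nabla u\|_{L^\infty}$ and $\|\nabla\dot u\|_{L^2}^2$ from (\ref{3VDD-full-E2.62-0}) and (\ref{3VDD-full-E2.65-10}). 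Working with $\rho$ directly instead of $\Lambda(\rho)$ is an immaterial difference given the pointwise bounds (\ref{3VDD-full-E2.37-0}).
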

\begin{proof}
From
  (\ref{3VDD-full-E2.63}) and the simple
computation, we have
    \begin{equation}
    \|\Lambda(t)\|_{H^2}^{2}\leq \|\Lambda(0)\|_{H^2}^{2}+K\int^t_0\left(\|F\|_{H^{2}}\|
    \Lambda\|_{H^{2}}+(1+\|\nabla u\|_{L^\infty}+\| u\|_{H^2})\|
    \Lambda\|_{H^{2}}^{2}\right)ds.\label{3VDD-full-E2.70}
    \end{equation}
From  (\ref{3VDD-full-E1.20}),  (\ref{3VDD-full-E2.59}),
(\ref{3VDD-full-E2.37-0}), (\ref{3VDD-full-E2.62-2}),
(\ref{3VDD-full-E2.65-10}), the Galiardo-Nirenberg inequality and
Sobolev's embedding theorem, we get
    \begin{eqnarray}
    \|F\|_{H^2}&\leq& K\left(\|F\|_{L^2}+\|\nabla \rho \dot{u}\|_{L^2}
    +\|\rho\nabla \dot{u}\|_{L^2}+\|\nabla f\|_{L^2}+\|f\cdot\nabla \rho\|_{L^2}
    \right)\nonumber\\
        &\leq&K(T)\left(1+\|\nabla \rho\|_{L^3}\| \dot{u}\|_{L^6}
    +\|\nabla \dot{u}\|_{L^2}
    \right)\nonumber\\
            &\leq&K(T)\left(1+\|\nabla\rho\|_{L^2}^{\frac{6-N}{6}}\|\nabla^2 \rho\|_{L^2}^{\frac{N}{6}}
         \| \dot{u}\|_{L^2}^\frac{3-N}{3} \|\nabla
         \dot{u}\|_{L^2}^\frac{N}{3}
    +\|\nabla \dot{u}\|_{L^2}
    \right)\nonumber\\
        &\leq&K(T)\left(1+\| \Lambda\|_{H^2}(1+\|\nabla
        \dot{u}\|_{L^2})
        +\|\nabla
        \dot{u}\|_{L^2}\right).\label{3VDD-full-E2.71-0}
    \end{eqnarray}
Thus, from (\ref{3VDD-full-E2.62-0}), (\ref{3VDD-full-E2.65-10}),
(\ref{3VDD-full-E2.70})--(\ref{3VDD-full-E2.71-0}) and Gronwall's
inequality, we have
    $$
    \|\Lambda(t)\|_{H^2}^{2}\leq K(T)+K(T)\int^t_0(1+\|\nabla u\|_{L^\infty}+\|\nabla
        \dot{u}\|_{L^2})\|
    \Lambda\|_{H^{2}}^{2}ds
    $$
and
    $$
    \|\Lambda(t)\|_{H^2}^{2}\leq K(T).
    $$
Using (\ref{3VDD-full-E2.37-0}) and (\ref{3VDD-full-E2.62-2}), we
can immediately obtain (\ref{3VDD-full-E2.65-0}).
\end{proof}

\begin{lem}\label{3VDD-full-L2.8}
For any $T>0$, we have
    \begin{equation}
        \int^{T\wedge T^*}_0\| u\|_{H^{3}}^2dt\leq
        K(T).\label{3VDD-full-E2.75}
    \end{equation}
\end{lem}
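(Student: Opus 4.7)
The plan is to use elliptic regularity for $u$ together with the decomposition \eqref{3VDD-full-E2.24}, namely
$$\Delta u^j=\partial_j\!\left(\frac{F+P-P(\tilde{\rho})}{2\mu+\lambda(\rho)}\right)+\partial_i w^{j,i},$$
which expresses the highest derivatives of $u$ in terms of $F$, $w$ and $\rho$. Since we work on $\mathbb{R}^N$ with solutions vanishing at infinity, standard Sobolev/elliptic theory gives $\|u\|_{H^3}\leq K(\|u\|_{L^2}+\|\Delta u\|_{L^2}+\|\nabla\Delta u\|_{L^2})$. The $\|u\|_{L^2}$ and $\|\Delta u\|_{L^2}$ pieces are already under control by Proposition \ref{3VDD-full-P2.1} and Lemma \ref{3VDD-full-P2.9}, so the task reduces to estimating $\|\nabla\Delta u\|_{L^2}$ in space-time $L^2$.

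First I would bound $\|F\|_{H^2}$ and $\|w\|_{H^2}$. The estimate \eqref{3VDD-full-E2.71-0} already yields
$$\|F\|_{H^2}\leq K(T)\bigl(1+\|\nabla\dot u\|_{L^2}\bigr),$$
and the analogous argument starting from \eqref{3VDD-full-E2.24-10} gives the same bound for $\|w\|_{H^2}$. Next, differentiating the representation of $\Delta u$, the term $\nabla^2\!\left(\frac{F+P-P(\tilde{\rho})}{2\mu+\lambda(\rho)}\right)$ produces, besides $\nabla^2 F$ and $\nabla^2 P$, products of the form $F\,\nabla^2\rho$, $\nabla F\cdot\nabla\rho$, $F\,(\nabla\rho)^2$, $\nabla P\cdot\nabla\rho$. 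I would estimate these in $L^2$ by H\"older's inequality and Sobolev embedding ($H^2\hookrightarrow L^\infty$ in both $N=2,3$, and $H^1\hookrightarrow L^p$ appropriately), using the uniform bound $\|\rho-\tilde{\rho}\|_{H^2}\leq K(T)$ from Lemma \ref{3VDD-full-L2.7}. For instance,
$$\|F\,\nabla^2\rho\|_{L^2}\leq\|F\|_{L^\infty}\|\nabla^2\rho\|_{L^2}\leq K(T)\|F\|_{H^2},\qquad \|\nabla F\cdot\nabla\rho\|_{L^2}\leq\|\nabla F\|_{L^3}\|\nabla\rho\|_{L^6}\leq K(T)\|F\|_{H^2}.$$
Collecting these, I obtain the pointwise-in-time bound
$$\|u(\cdot,t)\|_{H^3}\leq K(T)\bigl(1+\|\nabla\dot u(\cdot,t)\|_{L^2}\bigr).$$

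Squaring and integrating in time from $0$ to $T\wedge T^\ast$ then gives
$$\int_0^{T\wedge T^\ast}\|u\|_{H^3}^2\,dt\leq K(T)\left(T+\int_0^{T\wedge T^\ast}\|\nabla\dot u\|_{L^2}^2\,dt\right)\leq K(T),$$
where the last inequality is precisely \eqref{3VDD-full-E2.65-10} from Lemma \ref{3VDD-full-P2.9}. The main (mild) obstacle is the bookkeeping of nonlinear products in $\nabla^2\!\left(\frac{F+P-P(\tilde\rho)}{2\mu+\lambda(\rho)}\right)$; once one writes them out and applies Sobolev embedding together with the $H^2$ control of $\rho$ coming from Lemma \ref{3VDD-full-L2.7} and the $H^2$ control of $F,w$ coming from \eqref{3VDD-full-E2.71-0}, the estimate follows routinely.
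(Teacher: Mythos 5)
Your proposal is correct and follows essentially the same route as the paper: both use the elliptic decomposition \eqref{3VDD-full-E2.24} to reduce $\|u\|_{H^3}$ to $\|u\|_{L^2}$, $\|F\|_{H^2}$, $\|w\|_{H^2}$ and $\|\rho-\tilde\rho\|_{H^2}$, bound $\|F\|_{H^2}$ (and analogously $\|w\|_{H^2}$) by $K(T)(1+\|\nabla\dot u\|_{L^2})$ via \eqref{3VDD-full-E2.71-0} and Lemma \ref{3VDD-full-L2.7}, and then integrate in time using \eqref{3VDD-full-E2.65-10}. The only cosmetic difference is that you record a pointwise-in-time bound before squaring and integrating, while the paper writes the space--time estimate directly.
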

\begin{proof}
  From  (\ref{3VDD-full-E2.24-10}), (\ref{3VDD-full-E2.65-10}), (\ref{3VDD-full-E2.65-0}) and
  (\ref{3VDD-full-E2.71-0}), we have
     \begin{equation}
      \int^{T\wedge T^*}_0\left(\| F\|_{H^2}+\| w\|_{H^2}
      \right)^2dt
        \leq K(T).\label{3VDD-full-E2.76}
    \end{equation}
From (\ref{3VDD-full-E2.1}), (\ref{3VDD-full-E2.24}),
(\ref{3VDD-full-E2.65-0}) and (\ref{3VDD-full-E2.76}), we have
    $$
    \int^{T\wedge T^*}_0\| u\|_{H^{3}}^2dt\leq K\int^{T\wedge T^*}_0
    (\| u\|_{L^{2}}+(1+\|F\|_{H^{2}})(1+\|\rho-\tilde{\rho}\|_{H^{2}})
 +\|w\|_{H^{2}})^2dt
        \leq K(T).
    $$
\end{proof}

\begin{prop}
 For any $T>0$, we have
    \begin{equation}
      \sup_{t\in[0,T]\cap[0,T^*)}\int|\nabla\dot{u}|^2dx
      +\int^{T\wedge T^*}_0\int|\nabla^2 \dot{u}|^2dxdt
            \leq K(T),\label{3VDD-full-E2.72-5}
    \end{equation}
        \begin{equation}
    \sup_{t\in[0,T]\cap[0,T^*)}\left(\|(\rho-\tilde{\rho},u)\|_{H^3}+
    \|(\rho_t,u_t)\|_{H^2}\right)+\int^{T\wedge T^*}_0\|u\|_{H^4}^2ds
            \leq K(T).\label{3VDD-full-E2.72-50}
        \end{equation}
\end{prop}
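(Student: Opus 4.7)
The plan is to close the top-order material-derivative estimate (\ref{3VDD-full-E2.72-5}) by an energy argument on the equation satisfied by $\dot u$, and then to bootstrap through the elliptic structure of $F,w$ together with the transport equation for $\rho$ to deduce (\ref{3VDD-full-E2.72-50}). Throughout, the previously established bounds in Lemmas \ref{3VDD-full-P2.9}--\ref{3VDD-full-L2.8} and Proposition \ref{3VDD-full-P2.8} play the role of input data.

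To derive (\ref{3VDD-full-E2.72-5}), I would apply the material-derivative operator $\tfrac{D}{Dt}=\partial_t+u\cdot\nabla$ to the momentum equation written in the form $\rho\dot u^j=\partial_j F+\mu\partial_k w^{j,k}+\rho f^j$. Using the continuity equation, the left side becomes $\rho\ddot u^j$, while commuting $D/Dt$ with $\nabla$ and with the elliptic second-order operator yields an equation of schematic form
\[
\rho\ddot u^j-\mu\Delta\dot u^j-\partial_j\bigl((\mu+\lambda(\rho))\mathrm{div}\dot u\bigr)=\mathcal{R}^j,
\]
where $\mathcal{R}$ is a sum of commutators quadratic in $\nabla u$, of $\dot f$-type terms, and of the new piece $\partial_j\bigl(\lambda'(\rho)\dot\rho\,\mathrm{div}u\bigr)=-\partial_j\bigl(\lambda'(\rho)\rho(\mathrm{div}u)^2\bigr)$ coming from the $\rho$-dependence of $\lambda$. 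Multiplying by $\dot u^j$ and integrating by parts produces
\[
\tfrac12\tfrac{d}{dt}\!\int\!\bigl(\mu|\nabla\dot u|^2+(\mu+\lambda(\rho))(\mathrm{div}\dot u)^2\bigr)dx+\int\!\rho|\ddot u|^2\,dx\le\langle\mathcal{R},\dot u\rangle+\text{l.o.t.},
\]
and the right side is controlled via H\"older, Sobolev, the $L^1_tL^\infty_x$ bound on $\nabla u$ in (\ref{3VDD-full-E2.62-0}), the $L^\infty$ bound on $F,w$ in Proposition \ref{3VDD-full-P2.8}, and the $H^2$ bound on $\rho$ in Lemma \ref{3VDD-full-L2.7}. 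Gronwall closes the estimate, and the $L^2_tL^2_x$ bound on $\nabla^2\dot u$ follows by feeding the resulting $L^2$ bound on $\rho\ddot u$ into the elliptic estimate (\ref{3VDD-full-E2.16-2}) applied to the $\dot u$-system.

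Given (\ref{3VDD-full-E2.72-5}), the remaining bounds in (\ref{3VDD-full-E2.72-50}) come by bootstrapping. Plugging the improved $L^\infty_tL^2_x$ bound on $\nabla\dot u$ into (\ref{3VDD-full-E2.71-0}) upgrades $F$ to $L^\infty_tH^2_x$; differentiating (\ref{3VDD-full-E2.63}) twice and repeating the logarithmic Gronwall scheme of Lemma \ref{3VDD-full-L2.7} at one higher order then gives $\sup_t\|\rho-\tilde\rho\|_{H^3}\le K(T)$. Viewing the momentum equation as the elliptic system $\mu\Delta u+\nabla((\mu+\lambda(\rho))\mathrm{div}u)=\rho\dot u+\nabla P-\rho f$ with $C^1$ coefficient $\mu+\lambda(\rho)$ (since $H^3\hookrightarrow C^1$), classical elliptic regularity combined with product estimates upgrades $u$ to $L^\infty_tH^3_x\cap L^2_tH^4_x$. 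Finally, $\rho_t=-\mathrm{div}(\rho u)$ and $u_t=\dot u-u\cdot\nabla u$ give the $H^2$ bounds on $(\rho_t,u_t)$ by the product rule.

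The main technical obstacle is the $D/Dt$-estimate in step one: the variable viscosity produces the extra term $\lambda'(\rho)\dot\rho\,\nabla\mathrm{div}u$, which has no analogue in the constant-coefficient Hoff--Santos setting. It has to be absorbed into the dissipation by rewriting $\dot\rho=-\rho\,\mathrm{div}u$, integrating by parts, and using the integrable-in-time controls $\|\nabla u\|_{L^\infty}\in L^1_t$ and $\|F\|_{L^\infty}+\|w\|_{L^\infty}\in L^1_t$ available from (\ref{3VDD-full-E2.62-0}) and (\ref{3VDD-full-E2.72}); once this commutator is tamed, the remainder of the proof is a routine continuation of the bootstrapping already performed in Lemmas \ref{3VDD-full-P2.9}--\ref{3VDD-full-L2.8}.
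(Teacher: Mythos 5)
Your overall strategy---a top-order energy estimate on the convected momentum equation followed by an elliptic/transport bootstrap for $\|F\|_{H^2}$, $\|\rho-\tilde\rho\|_{H^3}$, $\|u\|_{H^3}$ and $\int\|u\|_{H^4}^2$---is the same as the paper's, and your bootstrap paragraph is essentially what the paper does (it repeats the arguments of Lemmas \ref{3VDD-full-L2.7}--\ref{3VDD-full-L2.8} one order higher). However, the central energy identity in your first step is wrong as written, and this is a genuine gap. If you multiply the $\ddot u$-equation $\rho\ddot u^j-\mu\Delta\dot u^j-\partial_j((\mu+\lambda)\mathrm{div}\,\dot u)=\mathcal{R}^j$ by $\dot u^j$ and integrate, you get
$\frac12\frac{d}{dt}\int\rho|\dot u|^2\,dx+\int\bigl(\mu|\nabla\dot u|^2+(\mu+\lambda)(\mathrm{div}\,\dot u)^2\bigr)dx=\langle\mathcal{R},\dot u\rangle$,
which is (a rederivation of) the already-known Lemma \ref{3VDD-full-P2.9}, not the new estimate. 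The identity you claim, with $\frac{d}{dt}\int\mu|\nabla\dot u|^2$ on the left and $\int\rho|\ddot u|^2$ as dissipation, can only come from testing against $\ddot u^j$; a single multiplication by $\dot u^j$ cannot produce both terms. Moreover, even after correcting the multiplier to $\ddot u$, your scheme only yields $\int\rho|\ddot u|^2$ as dissipation and recovers $\nabla^2\dot u$ a posteriori by elliptic regularity; but several right-hand-side terms (e.g.\ the contribution of $\lambda'(\rho)\rho_t(\mathrm{div}\,\dot u)^2$ from differentiating the viscous energy, after the $u\cdot\nabla\rho$ part is integrated by parts, and the commutators of $u\cdot\nabla$ with $\Delta$) genuinely involve $\nabla^2\dot u$ and cannot be absorbed into $\int\rho|\ddot u|^2$ without a circular appeal to the very elliptic estimate you are postponing. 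This absorption issue needs to be addressed explicitly, and you do not do so.

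The paper sidesteps both problems by a different (though related) choice of operator and multiplier: it applies $\nabla\partial_t+\nabla\mathrm{div}(u\,\cdot)$ to the momentum equation and tests against $\nabla\dot u$, so that the energy is $\frac12\int\rho|\nabla\dot u|^2$ (equivalent to $\int|\nabla\dot u|^2$ since $\rho\geq\frac12\underline\rho_1$ and $K$ may depend on $\underline\rho_1$) and the dissipation contains $\mu\int|\nabla^2\dot u|^2$ and $\int(\lambda+\frac{\mu}{2})|\nabla\frac{D}{Dt}\mathrm{div}\,u|^2$ \emph{explicitly}, so that all the commutator terms $J_2,\dots,J_7$ (including the new $\lambda'(\rho)$ contributions in $J_6$) can be absorbed by Cauchy--Schwarz into these two dissipation integrals using the inputs $\|\nabla u\|_{L^\infty}\in L^1_t$, $\|\rho-\tilde\rho\|_{H^2}\in L^\infty_t$ and $\int\|u\|_{H^3}^2\,dt\leq K(T)$. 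You correctly identified the key new term caused by $\lambda=\lambda(\rho)$, but to make your route rigorous you would need to either switch to the paper's multiplier or carefully show that every occurrence of $\nabla^2\dot u$ on the right-hand side carries a small or time-integrable coefficient.
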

\begin{proof}
Taking the operator $\nabla\partial_t+\nabla\mathrm{div}( u\cdot)$
in (\ref{3VDD-full-E1.1})$_2$, multiplying
 by $\nabla\dot{u}$ and integrating, we obtain
    \begin{eqnarray}
      &&\frac{1}{2}\int\rho|\nabla\dot{u}|^2dx
      \nonumber\\
            &=&\frac{1}{2}\int\rho_0|\nabla\dot{u}_0|^2dx+\int^t_0
            \int\{-\nabla \rho\partial_t\dot{u}\nabla
            \dot{u}-\nabla(\rho u_j)\partial_j \dot{u}\nabla
            \dot{u}
            \nonumber\\
                &&-\nabla\dot{u}^j\nabla[
            \partial_j P_t+\mathrm{div}(\partial_j Pu)]
            -\mu\Delta\dot{u}^j[\Delta u^j_t+\mathrm{div}(u\Delta
            u^j)]\nonumber\\
                    &&-\Delta\dot{u}^j[\partial_j\partial_t((\lambda+\mu)\mathrm{div}u)
                +\mathrm{div}(u\partial_j((\lambda+\mu)\mathrm{div}u))]
                -\Delta\dot{u}^j[(\rho f^j)_t+\mathrm{div}(u\rho f^j)]
            \}dxds\nonumber\\
                &:=&\sum^7_{i=1}J_i.\label{3VDD-full-E2.11-11}
    \end{eqnarray}
Since $\rho_0-\tilde{\rho}\in H^2$, $\rho_0\in
[\underline{\rho}_1,\bar{\rho}_1]$ and $u_0\in H^3$, we get
    \begin{equation}
      J_1=\frac{1}{2}\int\rho_0|\nabla\dot{u}_0|^2dx\leq K.
    \end{equation}
Using (\ref{3VDD-full-E2.37-0}),
(\ref{3VDD-full-E2.65-10})--(\ref{3VDD-full-E2.65-11}),
(\ref{3VDD-full-E2.65-0}), (\ref{3VDD-full-E2.75}), the integration
by parts and H\"{o}lder's inequality, we have
    \begin{eqnarray}
      J_2&=&-\int^t_0\int \nabla \rho\partial_t\dot{u}\nabla
            \dot{u}dxds\nonumber\\
            &=&-\int^t_0\int \nabla \rho\partial_t
            \left(\frac{\mu\Delta u+\nabla((\mu+\lambda)\mathrm{div}u)
            -\nabla P-\rho f}{\rho}\right)\nabla
            \dot{u}dxds\nonumber\\
            &\leq& K(T)\int^t_0\int|\nabla \rho||\nabla \dot{u}|\big(
            |\nabla^2 u||\nabla u|
            +|\nabla u||\nabla \rho|^2+|\nabla u|^2|\nabla \rho|
            +|\nabla \rho|^2+|\nabla\rho||\nabla^2u|+|\nabla^2u|\nonumber\\
                &&+|\nabla^2\rho|+|\nabla u||\nabla \rho|
            +|\nabla^3u|+|\nabla^2 \rho||\nabla u|+|\nabla \rho||\frac{D}{Dt}\mathrm{div}u|
            +|\Delta \dot{u}|+|\nabla\frac{D}{Dt}\mathrm{div}u|+|f_t|
            \big)dxds\nonumber\\
                &\leq&K(T)\int^t_0\left\{\|\nabla \dot{u}\|_{L^2}^\frac{1}{2}\|\nabla^2 \dot{u}\|_{L^2}^\frac{1}{2}
               \big(\|\nabla^2
                \dot{u}\|_{L^2}+\|\nabla\frac{D}{Dt}\mathrm{div}u\|_{L^2}+\|f_t\|_{L^2}\big)
                \|\rho-\tilde{\rho}\|_{H^2}\right.\nonumber\\
                    &&+\|\nabla \dot{u}\|_{L^2}^\frac{3-N}{3}\|\nabla^2 \dot{u}\|_{L^2}^\frac{N}{3}
                    \left.\left[
                    \left(\|u\|_{H^2}^2+1\right)\left(\|\nabla\rho\|_{H^1}^3+1
                \right)+\|\nabla\rho\|_{H^1}\|u\|_{H^3}
                +\left\|\frac{D}{Dt}\mathrm{div}u\right\|_{L^2}\|\nabla\rho\|_{H^1}^2
                \right]\right\}ds\nonumber\\
                    &\leq& K(T)+\frac{\mu}{10}\int^t_0\left(
                    \|\nabla^2
                    \dot{u}\|_{L^2}^2+\|\nabla\frac{D}{Dt}\mathrm{div}u\|_{L^2}^2
                    \right)ds,
    \end{eqnarray}
        \begin{eqnarray}
          J_3&=&-\int^t_0\int
          \nabla(\rho u_j)\partial_j
          \dot{u}\nabla\dot{u}dxds\nonumber\\
            &\leq& C\int^t_0\|\nabla \dot{u}\|_{L^4}^2\left(
            \|\nabla\rho\|_{L^2}\|u\|_{L^\infty}+\|\nabla u\|_{L^2}
            \right)dxds\nonumber\\
                &\leq&K(T)+\frac{\mu}{10}\int^t_0
                    \|\nabla^2
                    \dot{u}\|_{L^2}^2ds,
        \end{eqnarray}
    \begin{eqnarray}
      J_4&=&\int^t_0\int\Delta\dot{u}^j[
            \partial_j P_t+\mathrm{div}(\partial_j
            Pu)]dxds\nonumber\\
                &=&-\int^t_0\int[\partial_j\Delta\dot{u}^jP'\rho_t
                +\partial_k\Delta\dot{u}^j\partial_jP
                u^k]dxds\nonumber\\
      &=&\int^t_0\int[
        P'\rho\mathrm{div}u\partial_j\Delta\dot{u}^j-\partial_k(\partial_j\Delta\dot{u}^ju^k)P
      +P\partial_j(\partial_k\Delta\dot{u}^ju^k)]dxds\nonumber\\
            &\leq&K\left(\int^t_0\int\left(|\nabla^2 u|+|\nabla \rho||\nabla
            u|\right)^2
            dxds    \right)^\frac{1}{2}
            \left(\int^t_0\int|\nabla^2\dot{u}|^2
            dxds    \right)^\frac{1}{2}\nonumber\\
                    &\leq& K(T)+\frac{\mu}{10}\int^t_0
                    \|\nabla^2
                    \dot{u}\|_{L^2}^2ds,
    \end{eqnarray}
        \begin{eqnarray}
          J_5&=&-\int^t_0\int\mu\Delta\dot{u}^j[\Delta u^j_t+\mathrm{div}(u\Delta
            u^j)]dxds\nonumber\\
                &=&\int^t_0\int\mu[
                \partial_i\Delta\dot{u}^j\partial_iu^j_t+\Delta u^j
                u\cdot\nabla\Delta \dot{u}^j
                ]dxds\nonumber\\
            &=&\int^t_0\int\mu[-
            |\nabla^2 \dot{u}|^2-\partial_i\Delta\dot{u}^j u^k\partial_k\partial_i
            u^j-\partial_i \Delta\dot{u}^j\partial_i u^k\partial_k u^j
            +\Delta u^ju\cdot\nabla\Delta\dot{u}^j
            ]dxds\nonumber\\
                                    &=&\int^t_0\int\mu[-
            |\nabla^2 \dot{u}|^2+\partial_i\Delta\dot{u}^j \mathrm{div}u\partial_i
            u^j-\partial_i \Delta\dot{u}^j\partial_i u^k\partial_k u^j
            -\partial_i u^j\partial_iu^k\partial_k\Delta\dot{u}^j
            ]dxds\nonumber\\
            &\leq&-\frac{1}{2}\int^t_0\int\mu|\nabla^2 \dot{u}|^2dxds
            +K\int^t_0\int |\nabla u|^2|\nabla^2 u|^2dxds\nonumber\\
                    &\leq& -\frac{1}{2}\int^t_0\int\mu|\nabla^2 \dot{u}|^2dxds+K(T),
        \end{eqnarray}
            \begin{eqnarray}
                J_6&=&-\int^t_0\int\Delta\dot{u}^j[\partial_j\partial_t((\lambda+\mu)\mathrm{div}u)
                +\mathrm{div}(u\partial_j((\lambda+\mu)\mathrm{div}u))]dxds\nonumber\\
            &=&\int^t_0\int\{\partial_j\Delta\dot{u}^j[\partial_t((\lambda+\mu)\mathrm{div}u)
                +\mathrm{div}(u(\lambda+\mu)\mathrm{div}u)]\nonumber\\
                    &&+\Delta\dot{u}^j\mathrm{div}(\partial_ju(\lambda+\mu)\mathrm{div}u)\}dxds\nonumber\\
                        &=&\int^t_0\int\partial_j\Delta\dot{u}^j[\partial_t((\lambda+\mu)\mathrm{div}u)
                +u^k\lambda'\partial_k\rho\mathrm{div}u+(\lambda+\mu)u^k\partial_k\mathrm{div}u]dxds+O_6\nonumber\\
                                &=&\int^t_0\int\partial_j\Delta\dot{u}^j[(\lambda+\mu)\frac{D}{Dt}\mathrm{div}u
                                +\lambda'\rho_t\mathrm{div}u
                +u^k\lambda'\partial_k\rho\mathrm{div}u]dxds+O_6\nonumber\\
                             &=&\int^t_0\int\partial_j\Delta(\partial_tu^j
                             +u\cdot\nabla u^j)(\lambda+\mu)\frac{D}{Dt}
                             \mathrm{div}udxds+O_6\nonumber\\
            &=&-\int^t_0\int(\lambda+\mu)|\nabla\frac{D}{Dt}
                             \mathrm{div}u|^2dxds+O_6+O_7\nonumber\\
            &=&-\int^t_0\int(\lambda+\frac{\mu}{    2})|\nabla\frac{D}{Dt}
                             \mathrm{div}u|^2dxds+K(T)+\frac{\mu}{10}\int^t_0\|\nabla^2\dot{u}\|_{L^2}^2ds,
            \end{eqnarray}
        \begin{eqnarray}
          J_7&=&-\int^t_0\int \Delta\dot{u}^j[(\rho f^j)_t+\mathrm{div}(u\rho f^j)]
           dxds\nonumber\\
                &=&-\int^t_0\int \Delta\dot{u}^j[\rho f^j_t+\rho u\cdot\nabla
                f^j]dxds\nonumber\\
                    &\leq& \frac{1}{10}\int^t_0\int\mu|\nabla^2 \dot{u}|^2dxds+K(T),\label{3VDD-full-E2.15-11}
        \end{eqnarray}
where $O_6$ denotes any term dominated by $C\int^t_0\int(|\nabla
\rho||\nabla u|^2+|\nabla u||\nabla^2 u|)|\nabla^2 \dot{u}|dxds$ and
$O_7$ denotes any term dominated by
$C\int^t_0\int(|\frac{D}{Dt}\mathrm{div}u||\nabla \rho|+|\nabla
u||\nabla^2 u|)|\nabla\frac{D}{Dt}\mathrm{div}u|+|\nabla u||\nabla^2
u||\nabla \rho||\frac{D}{Dt}\mathrm{div}u|dxds$,  $t\in[0,T]\cap
[0,T^*)$. From
(\ref{3VDD-full-E2.11-11})--(\ref{3VDD-full-E2.15-11}), we
immediately obtain (\ref{3VDD-full-E2.72-5}). Using  similar
arguments as that in the proof of Lemmas
\ref{3VDD-full-L2.7}--\ref{3VDD-full-L2.8}, we can easily get
(\ref{3VDD-full-E2.72-50}).
\end{proof}

Using the standard arguments based on the local existence results
together with the estimates (\ref{3VDD-full-E2.37-0}) and
(\ref{3VDD-full-E2.72-50}), we can obtain that $T^*=\infty$. Since
the uniqueness of the solution $(\rho-\tilde{\rho},u)\in
C([0,\infty);H^3)\cap C^1([0,\infty);H^2)$ is classical, we omit the
detail. Thus, we finish the proof of the existence and uniqueness
parts of Theorem \ref{3VDD-full-T1.1}.

\section{Large time Behavior}

From (\ref{3VDD-full-E2.16-4}), (\ref{3VDD-full-E2.22}) and
(\ref{3VDD-full-E2.28}), we have
    \begin{equation}
    \int^\infty_1\int(|\rho-\tilde{\rho}|^4+|F|^4)dxdt\leq
    C.\label{3VDD-full-E4.8}
     \end{equation}
From (\ref{3VDD-full-E2.25}), we have
    $$
    \int|\rho-\tilde{\rho}|^4(x,t)dx\leq
    \int|\rho-\tilde{\rho}|^4(x,s)dx+C\int^{N+2}_N\int|F|^4dxd\tau,
    $$
where $t\in[N+1,N+2]$ and $s\in[N,N+1]$, $N>1$. Integrating it with
$s$ in $[N,N+1]$, we obtain
      $$
    \sup_{t\in[N+1,N+2]}\int|\rho-\tilde{\rho}|^4(x,t)dx\leq
    C\int^{N+2}_N\int(|\rho-\tilde{\rho}|^4+|F|^4)dxd\tau.
    $$
Letting $N\rightarrow\infty$, using (\ref{3VDD-full-E4.8}), we can
easily obtain
    $$
\lim_{t\rightarrow+\infty}\int|\rho-\tilde{\rho}|^4(x,t)dx=0.
    $$

From  (\ref{3VDD-full-E2.31}), we can obtain
   \begin{equation}
    \int^\infty_1\int\left(|u|^4+|\nabla u|^4\right)dxds\leq
    C.\label{3VDD-full-E3.1-01}
    \end{equation}
From (\ref{3VDD-full-E2.1}), we have
    \begin{equation}
    \int^\infty_0\int|\nabla u|^2dxds\leq
    C.\label{3VDD-full-E3.1-02}
   \end{equation}
Thus, for all $\epsilon\in(0,1)$, there is a positive constant
$T_\epsilon$, such that for all $\tau>T_\epsilon$, we have
     \begin{equation}
    \int^\infty_\tau\int\left(|u|^4+|\nabla u|^4
    +|\nabla u|^2+|f|^4\right)dxds<\epsilon.\label{3VDD-full-E3.1-0}
     \end{equation}
For all $t>T_\epsilon+2$ and $\tau\in[t-1,t-2]$, from
(\ref{3VDD-full-E1.1}), (\ref{3VDD-full-E3.1-0})  and H\"{o}lder's
inequality, we get
    \begin{eqnarray*}
      &&\int\left[
          \frac{1}{4}\rho|u|^4\right](x,t)dx+\int^{t}_\tau\int|u|^2\left[
          \mu|\nabla
          u|^2+(\lambda+\mu)(\mathrm{div}u)^2\right]dxds\nonumber\\
            &=&\int\left[
          \frac{1}{4}\rho|u|^4
          \right](x,\tau)dx
          +\int^{t}_\tau\int\big[P\mathrm{div}(|u|^2u)-\frac{1}{2}\mu|\nabla |u|^2|^2-(\lambda+\mu)\mathrm{div}uu
          \cdot\nabla|u|^2
         +\rho f\cdot u|u|^2\big]dxdt\nonumber\\
            &\leq &\int\left[
          \frac{1}{4}\rho|u|^4
          \right](x,\tau)dx
          +C\left(\int^{t}_\tau\int|\nabla u|^2dxds\right)^\frac{1}{2}
          \left(\int^{t}_\tau\int|u|^4dxds\right)^\frac{1}{2}\nonumber\\
            &&
                    +C\left(\int^{t}_\tau\int|\nabla u|^4dxds\right)^\frac{1}{2}
          \left(\int^{t}_\tau\int|u|^4dxds\right)^\frac{1}{2}
          +C\left(\int^{t}_\tau\int|f|^4dxds\right)^\frac{1}{4}
          \left(\int^{t}_\tau\int|u|^4dxds\right)^\frac{3}{4}\nonumber\\
            &\leq &\int\left[
          \frac{1}{4}\rho|u|^4
          \right](x,\tau)dx
          +C\epsilon.
    \end{eqnarray*}
Integrating it with $\tau$ in $[t-1,t-2]$, we obtain
    $$
      \int\left[
          \frac{1}{4}\rho|u|^4
          \right](x,t)dx
            \leq \int^{t-1}_{t-2}\int\left[
          \frac{1}{4}\rho|u|^4
          \right](x,\tau)dxd\tau+C\epsilon
            \leq C\epsilon.
    $$
Thus, we immediately obtain (\ref{3VDD-full-E1.15-10}).

From (\ref{3VDD-full-E2.64-0}), we have
    \begin{equation}
     \|\dot{u}\|_{L^2}\leq C\|\sqrt{\rho}\dot{u}\|_{L^2}
     +C\|\nabla\dot{u}\|_{L^2}.\label{3VDD-full-E3.5}
    \end{equation}
From   (\ref{3VDD-full-E2.22}),
(\ref{3VDD-full-E3.1-01})--(\ref{3VDD-full-E3.1-02}) and
(\ref{3VDD-full-E3.5}), we have that for all $\epsilon\in(0,1)$,
there is   a positive constant $T_{2\epsilon}$, such that for all
$\tau>T_{2\epsilon}$, we have
     \begin{equation}
  \int^\infty_\tau\int\left(|u|^4
    +|\nabla u|^4+|\nabla u|^2+|\dot{u}|^2+|\nabla \dot{u}|^2\right)dxds<\epsilon.
     \end{equation}
 For all $t>T_{2\epsilon}+2$
and $\tau\in[t-1,t-2]$, multiplying (\ref{3VDD-full-E1.1})$_2$ by
$\dot{u}$, integrating it over $\mathbb{R}^N\times[\tau,t]$, we
obtain
    \begin{eqnarray}
      &&\int^t_\tau\int\rho|\dot{u}|^2dxds\nonumber\\
            &=&\int^t_\tau\int\left(-\dot{u}\cdot\nabla P+\mu\Delta
            u\cdot\dot{u}+\nabla((\lambda+\mu)\mathrm{div}u)\cdot\dot{u}
            +\rho f\cdot\dot{u}
            \right)dxds\nonumber\\
                &:=&\sum^4_{i=1}J_i.\label{3VDD-full-E2.6-20}
    \end{eqnarray}
Using the integration by parts and H\"{o}lder's inequality, we have
    \begin{eqnarray}
      J_1&=&-\int^t_\tau\int\dot{u}\cdot\nabla P dxds
      \nonumber\\
            &=&\int^t_\tau\int \mathrm{div}\dot{u}
            (P-P(\tilde{\rho}))
           dxds      \nonumber\\
        &\leq& C\left[\int^t_\tau\int|\nabla
        \dot{u}|^2dxds
        \right]^\frac{1}{2}  \nonumber\\
        &\leq& C \epsilon^\frac{1}{2},
    \end{eqnarray}
        \begin{eqnarray}
          J_2&=&\int^t_\tau\int\mu \Delta
          u\cdot\dot{u}dxds\nonumber\\
                &=&-\frac{\mu}{2}\int|\nabla u|^2(x,t)dx
                +\frac{\mu}{2}\int|\nabla u|^2(x,\tau)dx
                               -\int^t_\tau\int\mu\partial_i u^j\partial_i(u^k\partial_k
                u^j)dxds\nonumber\\
          &\leq&-\frac{\mu}{2}\int|\nabla u|^2(x,t)dx
                +\frac{\mu}{2}\int|\nabla u|^2(x,\tau)dx
            +C \int^t_\tau\int|\nabla u|^3dxds
           \nonumber\\
          &\leq&-\frac{\mu}{2}\int|\nabla u|^2(x,t)dx
                +\frac{\mu}{2}\int|\nabla u|^2(x,\tau)dx
            +C \epsilon,
        \end{eqnarray}
            \begin{eqnarray}
              J_3&=&\int^t_\tau\int\nabla((\lambda+\mu)\mathrm{div}u)\cdot\dot{u}dxds\nonumber\\
              &=&-\frac{1}{2}\int\left[(\lambda+\mu)|\mathrm{div}u|^2
              \right](x,t)dx+\frac{1}{2}\int\left[(\lambda+\mu)|\mathrm{div}u|^2
              \right](x,\tau)dx
              +\int^t_\tau\int\frac{1}{2}\lambda'\rho_t|\mathrm{div}u|^2dxds\nonumber\\
                    &&-\int^t_\tau\int(\lambda+\mu)\mathrm{div}u\mathrm{div}(u\cdot\nabla u)dxds
                                       \nonumber\\
                        &\leq&-\frac{1}{2}\int\left[(\lambda+\mu)|\mathrm{div}u|^2
              \right](x,t)dx+\frac{1}{2}\int\left[(\lambda+\mu)|\mathrm{div}u|^2
              \right](x,\tau)dx+C \int^t_\tau\int|\nabla
              u|^3dxds              \nonumber\\
                        &\leq&-\frac{1}{2}\int\left[(\lambda+\mu)|\mathrm{div}u|^2
              \right](x,t)dx+\frac{1}{2}\int\left[(\lambda+\mu)|\mathrm{div}u|^2
              \right](x,\tau)dx+C \epsilon,
            \end{eqnarray}
        \begin{equation}
          J_4=\int^t_\tau\int\rho f\cdot\dot{u} dxds\leq
          C\left(\int^t_\tau\int |\dot{u}|^2
          dxds\right)^\frac{1}{2}\leq C\epsilon^\frac{1}{2}.\label{3VDD-full-E2.10-20}
        \end{equation}
From (\ref{3VDD-full-E2.1}),
(\ref{3VDD-full-E2.6-20})--(\ref{3VDD-full-E2.10-20}), we obtain
    $$
      \frac{\mu}{2}\int|\nabla u|^2(x,t)dx
      +\int^t_\tau\int\rho|\dot{u}|^2dxds
            \leq C \epsilon^\frac{1}{2}+C\int|\nabla u|^2(x,\tau)dx
    $$
Integrating it with $\tau$ in $[t-1,t-2]$, we obtain
    $$
      \frac{\mu}{2}\int|\nabla u|^2(x,t)dx
            \leq C \epsilon^\frac{1}{2}+C\int^{t-1}_{t-2}\int|\nabla u|^2(x,\tau)dx
                       \leq C \epsilon^\frac{1}{2}.
    $$
Thus, we immediately obtain (\ref{3VDD-full-E1.15-20}).

 Thus, we
finish the proof of  Theorem \ref{3VDD-full-T1.1}.

\section{Proof of Theorem \ref{2VDD-T1.1}}\label{2VDD-S3}

Let $j_\delta(x)$ be a standard mollifying kernel of width $\delta$.
Define the approximate initial data $(\rho_0^\delta,u_0^\delta)$ by
    $$
    \rho_0^\delta=j_\delta*\rho_0+\delta,\
        u_0^\delta=j_\delta*u_0.
    $$
Assuming that similar smooth approximations have been constructed
for functions $P$, $f$ and $\lambda$, we may then apply Theorem
\ref{3VDD-full-T1.1} to obtain a global smooth solution
$(\rho^\delta,u^\delta)$ of
(\ref{3VDD-full-E1.1})--(\ref{3VDD-full-E1.2}) with the initial data
$(\rho_0^\delta,u_0^\delta)$, satisfying the bound estimates of
Propositions \ref{3VDD-full-P2.1}-\ref{3VDD-full-P2.3} with
constants independent of $\delta$.

First, we obtain the strong limit of $\{u^\delta\}$. From
(\ref{3VDD-full-E2.22}) and (\ref{3VDD-full-E2.35}), we have
    \begin{equation}
      <u^\delta(\cdot,t)>^\alpha\leq C(\tau),\
     \ t\geq\tau>0,\label{2VDD-E3.1}
    \end{equation}
where $\alpha\in(0,1)$ when $N=2$, $\alpha\in(0,\frac{1}{2}]$ when
$N=3$.
 From (\ref{2VDD-E3.1}), we have
    $$
    \left|u^\delta(x,t)-\frac{1}{|B_R(x)|}\int_{B_R(x)}u^\delta(y,t)dy
    \right|\leq C(\tau)R^\alpha,\ t\geq\tau>0.
    $$
Taking $R=1$, from (\ref{3VDD-full-E2.16-0}) and
(\ref{3VDD-full-E2.22}), we have
    \begin{equation}
      \|u^\delta\|_{L^\infty(\mathbb{R}^N\times[\tau,\infty))}\leq
      C(\tau).\label{2VDD-E3.2}
    \end{equation}
Then, we need only to derive a modulus of H\"{o}lder continuity in
time.
      For all $t_2\geq
t_1\geq\tau$, from (\ref{3VDD-full-E2.1}), (\ref{3VDD-full-E2.22}),
(\ref{3VDD-full-E2.64-0})
 and (\ref{2VDD-E3.2}), we have
    \begin{eqnarray*}
      &&|u^\delta(x,t_2)-u^\delta(x,t_1)|\nonumber\\
            &\leq&\frac{1}{|B_R(x)|}\int^{t_2}_{t_1}\int_{B_R(x)}
            |u^\delta_t(y,s)|dyds+C(\tau)R^\alpha\nonumber\\
      &\leq&CR^{-\frac{N}{2}}|t_2-t_1|^\frac{1}{2}\left(\int^{t_2}_{t_1}\int
            |u^\delta_t|^2dyds\right)^\frac{1}{2}+C(\tau)R^\alpha\nonumber\\
      &\leq&CR^{-\frac{N}{2}}|t_2-t_1|^\frac{1}{2}\left(\int^{t_2}_{t_1}\int
            |\dot{u}^\delta|^2+|u^\delta\cdot\nabla u^\delta|^2dyds\right)^\frac{1}{2}+C(\tau)R^\alpha\nonumber\\
      &\leq&C(\tau)(R^{-\frac{N}{2}}|t_2-t_1|^\frac{1}{2}+R^\alpha).
        \end{eqnarray*}
Choosing $R=|t_2-t_1|^{\frac{1}{N+2\alpha}}$, we have
   \begin{equation}
    <u^\delta>^{\alpha,\frac{\alpha}{N+2\alpha}}_{\mathbb{R}^N\times[\tau,\infty)}\leq
    C(\tau),\ \tau>0.\label{2VDD-E3.3}
    \end{equation}
From the Ascoli-Arzela theorem, we have (extract a subsequence)
    \begin{equation}
      u^\delta\rightarrow u,
      \ \textrm{uniformly on compact sets in
      }\mathbb{R}^N\times(0,\infty).\label{2VDD-E3.4}
    \end{equation}

Second, we obtain the strong limits of $\{F^\delta\}$ and
$\{w^\delta\}$. From
(\ref{3VDD-full-E2.16-2})--(\ref{3VDD-full-E2.16-3}),
(\ref{3VDD-full-E2.22}) and (\ref{3VDD-full-E2.66}), using similar
arguments as that in the proof of
(\ref{2VDD-E3.1})--(\ref{2VDD-E3.2}), we have
    \begin{equation}
      <F^\delta(\cdot,t)>^{\alpha'}+\|F^\delta\|_{L^\infty(\mathbb{R}^N\times[\tau,T])}
      + <w^\delta(\cdot,t)>^{\alpha'}+\|w^\delta\|_{L^\infty(\mathbb{R}^N\times[\tau,T])}
      \leq C(\tau,T),\label{2VDD-E3.5}
    \end{equation}
where $0<\tau\leq t\leq T$ and ${\alpha'}\in(0,\frac{2+q-N}{2+q}]$.
The simple computation implies that
    \begin{eqnarray}
      F^\delta_t&=&
      \rho^\delta(2\mu+\lambda(\rho^\delta))\left(
      F^\delta\frac{d}{ds}\left.\left(\frac{1}{2\mu+\lambda(s)}\right)\right|_{s=\rho^\delta}
      +\frac{d}{ds}\left.\left(\frac{P(s)-P(\tilde{\rho})}{2\mu+\lambda(s)}\right)\right|_{s=\rho^\delta}
      \right)\mathrm{div}u^\delta\nonumber\\
        &&-u^\delta\cdot\nabla F^\delta+(2\mu+\lambda(\rho^\delta))\mathrm{div}\dot{u}^\delta-(2\mu+\lambda(\rho^\delta)
        )\partial_iu^\delta_j\partial_ju^\delta_i
    \end{eqnarray}
and
    \begin{equation}
    (w^\delta)^{k,j}_t=-u^\delta\cdot\nabla (w^\delta)^{k,j}
    +\partial_j \dot{u}^\delta_k
    -\partial_k \dot{u}^\delta_j
    -\partial_ju^\delta_i\partial_iu^\delta_k
    +\partial_ku^\delta_i\partial_iu^\delta_j.
    \end{equation}
Then, from (\ref{3VDD-full-E2.16-2}), (\ref{3VDD-full-E2.22}),
(\ref{3VDD-full-E2.31}), (\ref{2VDD-E3.2}) and (\ref{2VDD-E3.5}), we
have
    $$
    \|F^\delta_t\|_{L^2(\mathbb{R}^N\times[\tau,T])}
    +\|w^\delta_t\|_{L^2(\mathbb{R}^N\times[\tau,T])}\leq
    C(\tau,T),\ T>\tau>0.
    $$
Using a similar argument as that in the proof of (\ref{2VDD-E3.3}),
we obtain
    \begin{equation}
    <F^\delta>^{{\alpha'},\frac{{\alpha'}}{N+2{\alpha'}}}_{\mathbb{R}^N\times[\tau,T]}+
     <w^\delta>^{{\alpha'},\frac{{\alpha'}}{N+2{\alpha'}}}_{\mathbb{R}^N\times[\tau,T]}\leq
    C(\tau,T),\ T>\tau>0.
    \end{equation}
and (extract a subsequence)
        \begin{equation}
      F^\delta\rightarrow F,
      \  w^\delta\rightarrow w,
      \ \textrm{uniformly on compact sets in
      }\mathbb{R}^N\times(0,\infty).
    \end{equation}

    Third, we obtain the strong limit of $\{\rho^\delta\}$. From
    (\ref{3VDD-full-E2.37-0}), we get (extract a subsequence)
    $$
    \rho^\delta\overset{*}{\rightharpoonup}\rho,
    \ \textrm{ weak-* in    }\ L^\infty(\mathbb{R}^N).
    $$
Let $\Phi(s)$ be an arbitrary continuous function on
$[0,\bar{\rho}]$. Then, we have that (extract a subsequence)
$\Phi(\rho^\delta)$ converges weak-$*$ in $L^\infty(\mathbb{R}^N)$.
Denote the weak-$*$ limit by $\bar{\Phi}$:
    $$
     \Phi(\rho^\delta)\overset{*}{\rightharpoonup}\bar{\Phi},
    \ \textrm{ weak-* in    }\ L^\infty(\mathbb{R}^N).
    $$
From the definition of $F$, we have
    \begin{equation}
      \mathrm{div}u=\bar{\nu}F+\overline{P_0},
    \end{equation}
where
    $$
    \nu(\rho)=\frac{1}{2\mu+\lambda(\rho)},
    \ P_0(\rho)=\nu(\rho)(P(\rho)-P(\tilde{\rho})).
    $$
From (\ref{3VDD-full-E1.1}), we have
    $$
    \partial_t\overline{\rho\ln\rho}+\mathrm{div}(\overline{\rho\ln\rho}
    u)+F\overline{\rho\nu}+\overline{\rho P_0}=0
    $$
and
    $$
    \partial_t(\rho\ln\rho)+\mathrm{div}({\rho\ln\rho}
    u)+F\rho\overline{\nu}+\rho\overline{ P_0}=0.
    $$
Letting $\Psi=\overline{\rho\ln\rho}-{\rho\ln\rho}\geq0$, we obtain
    \begin{equation}
      \partial_t\Psi+ +\mathrm{div}(\Psi
    u)+F(\overline{\rho\nu}-\rho\nu)
    +F\rho(\nu-\bar{\nu})+\overline{\rho P_0}-\rho\overline{
    P_0}=0.\label{2VDD-E3.11}
    \end{equation}
with the initial condition $\Psi|_{t=0}=0$ almost everywhere in
$\mathbb{R}^N$. Let $\phi(s)=s\ln s$. Since
    $$
    \phi''(s)=\frac{1}{s}\geq
\frac{1}{\bar{\rho}},
    \ s\in[0,\bar{\rho}],
    $$
we get
    $$
    \phi(\rho^\delta)-\phi(\rho)
    =\phi'(\rho)(\rho^\delta-\rho)
    +\frac{1}{2}\phi''(\rho+\xi(\rho^\delta-\rho))(\rho^\delta-\rho)^2,
    \ \xi\in[0,1],
    $$
and
    \begin{equation}
      \overline{\lim_{\delta\rightarrow0}}\|\rho^\delta-\rho\|_{L^2}^2
      \leq C\|\Psi\|_{L^1}.
    \end{equation}
Similarly, every function $f\in C^2([0,\bar{\rho}])$ satisfies
    \begin{equation}
      \left|\int g(\bar{f}-f(\rho))dx
      \right|\leq C\int |g|\Psi dx,
    \end{equation}
where $g$ is any function such that the integrations exist. Then,
when $\nu\in C^2([0,\bar{\rho}])$, we have
    \begin{equation}
      \left|\int F(\overline{\rho\nu}-\rho\nu)dx
      \right|\leq C\int |F|\Psi dx
    \end{equation}
and
    \begin{equation}
      \left|\int F\rho(\bar{\nu}-\nu)dx
      \right|\leq C\int |F|\Psi dx.
    \end{equation}
When $P_0\in C^2([0,\bar{\rho}])$, we have
    \begin{equation}
    \left|\int (\overline{\rho P_0}-\rho \overline{P_0})dx
      \right|\leq    \left|\int (\overline{\rho P_0}-\rho P_0)dx
      \right|+\left|\int \rho(\overline{P_0}-P_0)dx
      \right|\leq C\int \Psi dx.
    \end{equation}
When $P_0$ is monotone function on $[0,\bar{\rho}]$, using the Lemma
5 in \cite{Vaigant}, we have
     \begin{equation}
     \overline{\rho P_0}\geq \rho \overline{P_0}.\label{2VDD-E3.18}
    \end{equation}
From (\ref{2VDD-E3.11})--(\ref{2VDD-E3.18}), we obtain
    $$
    \int\Psi dx\leq \int^t_0\int(1+ |F|)\Psi dxds.
    $$
Using (\ref{3VDD-full-E2.72}) and Gronwall's inequality, we get
    $$
    \Psi=0,\ (t,x)\in[0,T]\times\mathbb{R}^N,
    $$
and  (extract a subsequence)
    $$
    \rho^\delta-\tilde{\rho}\rightarrow \rho-\tilde{\rho},
    \ \textrm{ strongly in } L^k(\mathbb{R}^N\times[0,\infty)),
    $$
for all $k\in[2,\infty)$.

Thus, it is easy to show that the limit function $(\rho,u)$ are
indeed a weak solution of the system
(\ref{3VDD-full-E1.1})--(\ref{3VDD-full-E1.2}). Using a similar
argument as that in the proof of (\ref{3VDD-full-E1.15-10}), we get
(\ref{2VDD-full-E1.28}). This finishes the proof of Theorem
\ref{2VDD-T1.1}. {\hfill $\square$\medskip}

\section{Appendix}\label{3VDD-full-A1}
It requires a dyadic decomposition of the Fourier space, so let us
start by recalling the definition of the following operators of
localization in Fourier space (see \cite{Chemin}):
   $$
   \Delta_{q}a \triangleq
   \mathcal{F}^{-1}(\varphi(2^{-q}|\xi|)\hat{a}),\ \ \ \ {\rm for}\ q\in
   \mathbb{Z},
    $$
where $\mathcal{F}a$ and $\hat{a}$ denote the Fourier transform of
any function $a$. The function $\varphi$ is smooth, and satisfies
    $$
    \mathrm{supp}
\varphi\in\left\{\xi\in\mathbb{R}^2\left||\xi|\in[\frac{3}{4},\frac{8}{3}]\right.\right\},
\ \sum_{j\in \mathbb{Z}}\varphi(2^{-j}t)=1, \ \forall\ t\in
\mathbb{R}\backslash \{0\}.
    $$
 Let us note that if $|j-j'|\geq
5$, then  $\mathrm{supp}\varphi(2^{-j}t)\cap \mathrm{supp}
\varphi(2^{-j'}t)=\emptyset $.

\begin{defn} We denote by $\dot{B}_{p,q}^{s}$ the space of
distributions, which is the completion of
$\mathcal{S}(\mathbb{R}^N)$, $N\geq2$, by the following norm:
$$\|a\|_{\dot{B}_{p,q}^{s}}\stackrel {\mathrm {def}}{=} \left\|
2^{sk}\|\Delta_{k}a\|_{L^{p}(\mathbb{R}^N)} \right\|_{l^q_{k}}.$$
\end{defn}

\begin{lem}[\cite{Chemin}]
Denote $\mathcal{B}$ a ball of $\mathbb{R}^N$, and $\mathcal{C}$ a
ring of $\mathbb{R}^N$. Assume that $1\leq p_{2}\leq p_{1}\leq
\infty$ and $1\leq q_{2}\leq q_{1}\leq \infty$. If the support of
$\hat{a}$ is included in $2^{k}\mathcal{B}$, then
$$\|\partial^{\alpha}a\|_{L^{p_{1}}}\lesssim 2^{k(|\alpha|+N(\frac{1}{p_{2}}-\frac{1}{p_{1}}))}
\|a\|_{L^{p_{2}}}.$$ If the support of $\hat{a}$ is included in
$2^{k}\mathcal{C}$, then
$$\|a\|_{L^{p_{1}}}\lesssim
2^{-kM}\sup_{|\alpha|=M} \|\partial^{\alpha}a\|_{L^{p_{1}}}.$$
\end{lem}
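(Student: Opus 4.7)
The plan is to prove both inequalities by reducing to convolution estimates via Fourier-side cutoffs and then applying Young's convolution inequality together with simple scaling. The key observation is that if $\mathrm{supp}\,\hat{a}\subset 2^{k}\mathcal{B}$ (or $2^{k}\mathcal{C}$), we can insert a smooth cutoff $\chi$ equal to $1$ on $\mathcal{B}$ (resp.\ $\mathcal{C}$) and compactly supported in a slightly larger set, so that $\hat{a}(\xi)=\chi(2^{-k}\xi)\hat{a}(\xi)$, and hence $a = 2^{kN}\,(\mathcal{F}^{-1}\chi)(2^{k}\cdot)\ast a$.

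For the first inequality, I would differentiate this convolution representation to obtain
\[
\partial^{\alpha}a(x)=2^{k(N+|\alpha|)}\,\bigl[(\partial^{\alpha}\mathcal{F}^{-1}\chi)(2^{k}\cdot)\bigr]\ast a(x).
\]
Then Young's inequality with $1+\tfrac{1}{p_{1}}=\tfrac{1}{r}+\tfrac{1}{p_{2}}$ gives
\[
\|\partial^{\alpha}a\|_{L^{p_{1}}}\le 2^{k(N+|\alpha|)}\|(\partial^{\alpha}\mathcal{F}^{-1}\chi)(2^{k}\cdot)\|_{L^{r}}\|a\|_{L^{p_{2}}},
\]
and the scaling $\|g(2^{k}\cdot)\|_{L^{r}}=2^{-kN/r}\|g\|_{L^{r}}$ together with $1-\tfrac{1}{r}=\tfrac{1}{p_{2}}-\tfrac{1}{p_{1}}$ yields the exponent $k(|\alpha|+N(\tfrac{1}{p_{2}}-\tfrac{1}{p_{1}}))$. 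The function $\partial^{\alpha}\mathcal{F}^{-1}\chi$ is Schwartz, so its $L^{r}$-norm is a finite constant depending only on $\chi$ and $\alpha$.

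For the second (ring) inequality, the main obstacle is inverting the differentiation $\partial^{\alpha}$ only for $|\alpha|=M$; this is where the ring hypothesis enters, since $0\notin\mathcal{C}$ allows division by $|\xi|^{2M}$. The algebraic identity
\[
|\eta|^{2M}=\sum_{|\alpha|=M}\binom{M}{\alpha}\eta^{2\alpha}
\]
lets me define $\phi_{\alpha}(\eta):=(-i\eta)^{\alpha}\chi(\eta)/|\eta|^{2M}$, which is smooth and compactly supported in a ring, and satisfies
\[
\sum_{|\alpha|=M}\binom{M}{\alpha}(i\eta)^{\alpha}\phi_{\alpha}(\eta)=\chi(\eta).
\]
Substituting $\eta=2^{-k}\xi$ and multiplying by $\hat{a}(\xi)$, I obtain the representation
\[
a(x)=\sum_{|\alpha|=M}\binom{M}{\alpha}\,2^{-kM}\,2^{kN}\Phi_{\alpha}(2^{k}\cdot)\ast\partial^{\alpha}a(x),\qquad \Phi_{\alpha}:=\mathcal{F}^{-1}\phi_{\alpha}.
\]

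Finally, Young's inequality with the pair $(1,p_{1},p_{1})$ gives
\[
\|a\|_{L^{p_{1}}}\le\sum_{|\alpha|=M}\binom{M}{\alpha}\,2^{-kM}\,2^{kN}\|\Phi_{\alpha}(2^{k}\cdot)\|_{L^{1}}\|\partial^{\alpha}a\|_{L^{p_{1}}}\lesssim 2^{-kM}\sup_{|\alpha|=M}\|\partial^{\alpha}a\|_{L^{p_{1}}},
\]
since $\|\Phi_{\alpha}(2^{k}\cdot)\|_{L^{1}}=2^{-kN}\|\Phi_{\alpha}\|_{L^{1}}$ and $\Phi_{\alpha}$ is Schwartz. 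The only genuinely nontrivial step is the construction of the inverting symbols $\phi_{\alpha}$, which relies crucially on the ring being separated from the origin; the rest is bookkeeping of the scaling and Young's inequality.
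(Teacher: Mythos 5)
Your proof is correct: both the convolution representation $a=2^{kN}(\mathcal{F}^{-1}\chi)(2^{k}\cdot)\ast a$ with Young's inequality for the ball case, and the construction of the inverting symbols $\phi_{\alpha}=(-i\eta)^{\alpha}\chi(\eta)/|\eta|^{2M}$ via the multinomial identity for the ring case, are exactly the standard argument. The paper gives no proof of this lemma, merely citing Chemin, and your argument is essentially the one found in that reference, so there is nothing further to compare.
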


\begin{lem} For any $p>N$ and $N\geq2$, there exists a positive constant $C_p$
such that
  \begin{equation}
      \|\nabla u\|_{L^\infty}\leq C\|u\|_{L^2}+C_p(\|\nabla
      u\|_{\dot{B}^0_{\infty,\infty}}+1)\log\left(
      e+\frac{\|\Delta u\|_{L^{p}}}{\|\nabla u\|_{\dot{B}^0_{\infty,\infty}}+1}
      \right).\label{2VDD-full-E3.1}
    \end{equation}
\end{lem}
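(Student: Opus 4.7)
The plan is to prove this logarithmic Sobolev-type inequality by Littlewood--Paley decomposition, splitting $\nabla u$ into low, middle, and high frequency blocks, and then balancing the middle and high contributions by an optimal choice of cutoff. Concretely, I will write, for an integer $M\geq 0$ to be chosen,
$$
\nabla u=\sum_{q<0}\Delta_q\nabla u+\sum_{0\leq q\leq M}\Delta_q\nabla u+\sum_{q>M}\Delta_q\nabla u,
$$
and estimate each of the three pieces in $L^\infty$.

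For the low-frequency tail $q<0$, the Fourier support of $\Delta_q u$ lies in a ball of radius $\sim 2^q$, so Bernstein's inequality (the first part of the lemma from \cite{Chemin} just quoted) yields $\|\Delta_q\nabla u\|_{L^\infty}\lesssim 2^{q(1+\frac{N}{2})}\|\Delta_q u\|_{L^2}\lesssim 2^{q(1+\frac{N}{2})}\|u\|_{L^2}$. Summing the resulting geometric series over $q<0$ gives a contribution $\leq C\|u\|_{L^2}$. For the middle block $0\leq q\leq M$ the definition of $\dot B^{0}_{\infty,\infty}$ directly gives $\|\Delta_q\nabla u\|_{L^\infty}\leq\|\nabla u\|_{\dot B^{0}_{\infty,\infty}}$, hence this block contributes at most $(M+1)\|\nabla u\|_{\dot B^{0}_{\infty,\infty}}$.

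For the high-frequency tail $q>M$ the Fourier support sits in an annulus $2^q\mathcal{C}$, so Bernstein gives $\|\Delta_q\nabla u\|_{L^\infty}\lesssim 2^{qN/p}\|\Delta_q\nabla u\|_{L^p}$, while the fact that $\nabla\Delta^{-1}$ is a smooth multiplier of order $-1$ localized at frequency $2^q$ (combined with the ring version of Bernstein, or simply computed from the Fourier side) yields $\|\Delta_q\nabla u\|_{L^p}\lesssim 2^{-q}\|\Delta u\|_{L^p}$. Hence $\|\Delta_q\nabla u\|_{L^\infty}\lesssim 2^{q(N/p-1)}\|\Delta u\|_{L^p}$; since $p>N$ we have $N/p-1<0$, so summing gives the bound $C_p\,2^{-M(1-N/p)}\|\Delta u\|_{L^p}$.

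Now I choose $M$ as the smallest nonnegative integer with
$$
2^{M(1-N/p)}\geq\frac{\|\Delta u\|_{L^p}}{\|\nabla u\|_{\dot B^{0}_{\infty,\infty}}+1},
$$
so that the high-frequency contribution is bounded by a constant multiple of $\|\nabla u\|_{\dot B^{0}_{\infty,\infty}}+1$, and
$$
M\leq C_p\,\log\Bigl(e+\frac{\|\Delta u\|_{L^p}}{\|\nabla u\|_{\dot B^{0}_{\infty,\infty}}+1}\Bigr).
$$
Adding the three contributions gives exactly \eqref{2VDD-full-E3.1}. The only delicate point is the case when $\|\Delta u\|_{L^p}\leq\|\nabla u\|_{\dot B^{0}_{\infty,\infty}}+1$: there one simply takes $M=0$, which produces no high-frequency term and leaves the middle block bounded by $\|\nabla u\|_{\dot B^{0}_{\infty,\infty}}$ (the logarithm is then bounded by $1$). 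The main (mild) obstacle is making sure the $p$-dependent constants enter only through the factor $C_p$ and through the coefficient $1/(1-N/p)$ of the logarithm, and that the bound $\|\Delta_q\nabla u\|_{L^p}\lesssim 2^{-q}\|\Delta u\|_{L^p}$ is verified uniformly in $q$, for which one invokes the ring-localization form of Bernstein quoted in the paper.
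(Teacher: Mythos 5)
Your proposal is correct and follows essentially the same route as the paper: a Littlewood--Paley splitting into low, middle, and high frequencies, with Bernstein for the low and high blocks, the $\dot B^0_{\infty,\infty}$ norm for the middle block, and an optimal choice of the cutoff $M\sim\frac{p}{p-N}\log\bigl(e+\|\Delta u\|_{L^p}/(\|\nabla u\|_{\dot B^0_{\infty,\infty}}+1)\bigr)$. Your treatment of the edge case $\|\Delta u\|_{L^p}\leq\|\nabla u\|_{\dot B^0_{\infty,\infty}}+1$ is a welcome extra detail that the paper leaves implicit.
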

\begin{proof}
\begin{eqnarray*}
        \|\nabla u\|_{L^\infty}&\leq&C\|u\|_{L^2}+\sum_{k\geq0}\|\nabla\Delta_k
        u\|_{L^\infty}=C\|u\|_{L^2}+\sum_{k\geq0}^{M-1}\|\nabla\Delta_k
        u\|_{L^\infty}+\sum_{k\geq M}\|\nabla\Delta_k
        u\|_{L^\infty}\\
            &\leq& C\|u\|_{L^2}+M\|\nabla u\|_{\dot{B}^{0}_{\infty,\infty}}
            +\sum_{k\geq M}2^{k(\frac{N}{p}-1)}\|\Delta \Delta_ku\|_{L^{p}}\\
            &\leq& C\|u\|_{L^2}+M\|\nabla u\|_{\dot{B}^{0}_{\infty,\infty}}
            +\frac{C2^{M(\frac{N}{p}-1)}}{1-2^{\frac{N}{p}-1}}\|\Delta
            u\|_{L^{p}}.
\end{eqnarray*}
Choosing $M\sim\frac{p}{p-N} \log\left(
      e+\frac{\|\Delta u\|_{L^{p}}}{\|\nabla u\|_{\dot{B}^0_{\infty,\infty}}+1}
      \right)$, we can finish the proof.
\end{proof}

\section*{Acknowledgment}

The author would like to thank Professor Daoyuan Fang for his
reading the draft of this paper and valuable discussions. The author
was partially supported by the National Science Foundation of China
under grants 10871175 and a Project Supported by Scientific Research
Fund of Zhejiang Provincial Education Department under grants
Y200803203.

\end{document}